\newtheorem{theorem}{Theorem}
\newtheorem{conjecture}[theorem]{Conjecture}
\newtheorem{corollary}[theorem]{Corollary}
\newtheorem{definition}[theorem]{Definition}
\newtheorem{example}[theorem]{Example}
\newtheorem{statement}[theorem]{Statement}
\newtheorem{lemma}[theorem]{Lemma}
\newtheorem{proposition}[theorem]{Proposition}
\newtheorem{remark}[theorem]{Remark}
\newcommand{\vnotimes}{\overline{\otimes}}
\newenvironment{proof}[1][Proof]{\noindent\textbf{#1.} }{\ \rule{0.5em}{0.5em}}
\def\m{\mathcal{M}}
\def\u{\mathcal U}
\def\a{\mathcal A}
\def\C{\mathbb C}
\def\R{\mathbb R}
\def\N{\mathbb N}
\title{Multivariable Schur-Horn theorems }
\author{Pedro Massey\footnote{Partially supported by CONICET (PIP 0435/12) and FCE-UNLP (11X681)  
}  \,and Mohan Ravichandran}
\date{}
\newcommand{\Addresses}{{
  \bigskip
  \footnotesize

  Pedro Massey, \textsc{Depto Matemática, FCE-UNLP and
IAM ``Alberto Calder\'on'', CONICET, Argentina}\par\nopagebreak
  \textit{E-mail address}, Pedro Massey: \texttt{massey@mate.unlp.edu.ar}
  
  \medskip

  Mohan Ravichandran, \textsc{Department of Mathematics, Mimar Sinan Fine Arts University,
   Istanbul, Turkey}\par\nopagebreak
  \textit{E-mail address}, Mohan Ravichandran: \texttt{mohan.ravichandran@gmail.com}

}}
\begin{document}
\maketitle

\begin{abstract}
We prove a variety of results describing the possible diagonals of tuples of commuting hermitian operators in type $II_1$ factors. These results are generalisations of the classical Schur-Horn theorem to the infinite dimensional, multivariable setting. Our description of these possible diagonals uses a natural generalisation of the classical notion of majorization to the multivariable setting. In the special case when both the given tuple and the desired diagonal have finite joint spectrum, our results are complete. When the tuples do not have finite joint spectrum, we are able to prove strong approximate results. Unlike the single variable case, the multivariable case presents several surprises and we point out obstructions to extending our complete description in the finite spectrum case to the general case. We also discuss the problem of characterizing diagonals of commuting tuples in $\mathcal{B}(\mathcal{H})$ and give approximate characterizations in this case as well. 
\end{abstract}

\tableofcontents

\section{Introduction}

The classical Schur-Horn theorem characterizes the possible diagonals of a hermitian operator on $\mathbb{C}^d$. Indeed, fix an orthonormal basis $\{e_1,\cdots,e_d\}$ for $\mathbb{C}^d$ and let $E$ be the restriction map (acting on $d\times d$ matrices) that sends a matrix to its diagonal. Given hermitian diagonal matrices $A$ and $S$, the following are equivalent
(see \cite{BhaBoo})
\begin{enumerate}
\item There is a unitary $U$ such that $E(USU^*) = A$. 

\item The majorization relations hold : If $(\mu_1, \cdots, \mu_d)$ and $(\lambda_1,\cdots,\lambda_d)$ are the diagonal entries of $A$ and $S$ respectively (i.e. their eigenvalues), arranged in non-increasing order, then
\[\sum_{m=1}^k \mu_m \leq \sum_{m=1}^{k} \lambda_m\, , \quad   1 \leq k \leq d\,,\qquad \sum_{m=1}^d\mu_m = \sum_{m=1}^{d} \lambda_m\ .\]

 \end{enumerate}

The above line of investigation can be fruitfully extended to type II$_1$ factors; Here, the term ``diagonal of an operator''  refers to the trace preserving conditional expectation of an operator onto a masa. As pointed out by Arveson and Kadison in \cite{ArvKad},
it is natural to ask for a description of the relationship between a hermitian operator in a type II$_1$ factor and its possible diagonals. This problem admits a complete answer in terms of a concrete set of inequalities analogous to the matrix case. Given hermitian operators $A$ and $S$ in a type II$_1$ factor $\mathcal M$, majorization is defined by a system of inequalities analogous to the matrix case (see \cite{HiaiMa,Kamei}). Indeed, let $f_A$ and $f_S$ be the non-increasing rearrangements of $A$ and $S$ respectively; then, we say that $A$ is majorized by $S$, denoted $A \prec S$ if we have that
\begin{equation}\label{eq def maj in II1}
\int_0^r f_A \ dm \leq \int_0^r f_S \ dm\,, \quad 0 \leq r \leq 1,\qquad \int_0^1 f_A \ dm = \int_0^1 f_S \ dm\ .
\end{equation}
Let $\mathcal{A}$ be a masa in $\mathcal{M}$ and let $E$ denote the normal conditional expectation onto $\mathcal{A}$. 
Given a hermitian operator $S$ in $\mathcal{M}$, let $\mathcal{O}(S)$ denote the norm closure of the unitary orbit of $S$,
\[\mathcal{O}(S) := \overline{\{USU^{*} | U \in \mathcal{U}(\mathcal{M})\}}.\] 
Further, let us use the notation $S \approx T$ for hermitian operators $S$ and $T$ to denote that $S$ and $T$ are approximately unitarily equivalent, that is $T\in \mathcal O(S)$ (or equivalently $S\in \mathcal O(T)$). It is well known that $S\approx T$ iff they have the same moments or equivalently if the non-increasing rearrangements of $S$ and $T$ coincide. 

Kadison and Arveson proved in \cite{ArvKad} that if $A \in \mathcal{A}$ and $S \in \mathcal{M}$ are hermitian operators such that $E(T) = A$ for some $T\in \mathcal O(S)$ then we have that $A \prec S$ and they conjectured that the converse was true.  The Schur-Horn theorem in type II$_1$ factors \cite{SHFull} then states:
\begin{theorem}[The Schur-Horn theorem in II$_1$ factors](Ravichandran, 2012)\label{SHTHM}
Let  $\mathcal{M}$ be a type II$_1$ factor, $\mathcal{A}$ a masa in $\mathcal{M}$ and let $E$ denote the trace preserving conditional expectation onto $\mathcal A$. Given
hermitian operators $A \in \mathcal{A}$ and $S \in \mathcal{M}$ such that $A \prec S$ then, there is an element $T \in \mathcal{O}(S)$ such that 
\[E(T) = A\ .\]
Secondly, given hermitian operators $A$ and $S$ such that $A \prec S$, there is some masa $\mathcal{B}$ so that $E_{\mathcal{B}}(S) \approx A$. 
\end{theorem}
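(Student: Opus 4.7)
My plan is to prove the first assertion --- existence of $T \in \mathcal{O}(S)$ with $E(T) = A$ --- by reducing to the finite-spectrum case, handling that by a Horn-type inductive argument, and then deducing the second assertion from the first.

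For the reduction, I would choose norm-convergent approximations $A_n \to A$ in $\mathcal{A}$ and $S_n \to S$ in $\mathcal{M}$ with both having finite spectrum and $A_n \prec S_n$ at each stage; this is achieved by approximating the non-increasing rearrangements $f_A$ and $f_S$ by simple step functions while preserving the integral inequalities \eqref{eq def maj in II1}. If the finite-spectrum case furnishes $T_n \in \mathcal{O}(S_n)$ with $E(T_n) = A_n$, then $(T_n)$ is norm bounded, and a weak$^*$-compactness argument combined with norm continuity of $E$ on bounded sets yields an accumulation point $T \in \mathcal{O}(S)$ with $E(T) = A$.

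For the finite-spectrum case, write $A = \sum_{i=1}^r \alpha_i P_i$ and $S = \sum_{j=1}^s \beta_j Q_j$ with pairwise orthogonal spectral projections, the $P_i$ lying in $\mathcal{A}$, and induct on $r$. In the inductive step, extract the top eigenvalue $\alpha_1$ of $A$: exploit the fact that in a type II$_1$ factor one can find projections of any prescribed trace, and use the matrix Schur-Horn theorem inside a hyperfinite subfactor compatible with $\mathcal{A}$ to produce a unitary conjugate $S'$ of $S$ whose $E$-diagonal agrees with $\alpha_1 P_1$ on $P_1$. Then compress to $1 - P_1$: both the factor and the masa restrict to this corner, and the residual majorization inequality follows from the original one by subtracting the peeled contributions. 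Induction then finishes the construction.

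The principal obstacle is precisely this peeling step: showing that the top eigenvalue of $A$ can actually be produced as the diagonal of some conjugate of $S$ on $P_1$, in a way that the complement still majorizes what remains of $A$. In finite dimensions this is direct via the Horn algorithm, but in the II$_1$ setting it requires a careful Ky Fan-type reformulation of majorization as a supremum over projections in $\mathcal{A}$, so that any local adjustment respects the global inequalities. Finally, for the second assertion, once the first is available at the approximate level, take $U_n \in \mathcal{U}(\mathcal{M})$ with $E(U_n S U_n^*) \to A$ in norm and set $\mathcal{B}_n := U_n^* \mathcal{A} U_n$. Each $\mathcal{B}_n$ is a masa, and $E_{\mathcal{B}_n}(S) = U_n^* E(U_n S U_n^*) U_n$ is unitarily conjugate to $E(U_n S U_n^*)$; consequently $E_{\mathcal{B}_n}(S) \approx A$ for $n$ large, since $\approx$ is determined by coincidence of non-increasing rearrangements, a condition closed under norm limits.
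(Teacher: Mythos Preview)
This theorem is not proved in the present paper; it is quoted from \cite{SHFull} as background, so there is no in-paper argument to compare against. That said, your sketch contains two genuine gaps.

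The reduction step fails. From $T_n \in \mathcal{O}(S_n)$ with $E(T_n) = A_n$, weak$^*$-compactness gives a weak$^*$ accumulation point $T$, and normality of $E$ then forces $E(T) = A$; but there is no reason for $T$ to lie in $\mathcal{O}(S)$. The norm-closed unitary orbit is far smaller than its weak$^*$ closure --- if $S$ is a projection of trace $\tfrac12$, for instance, the weak$^*$ closure of its unitary orbit contains $\tfrac12 I$, which is not in $\mathcal{O}(S)$. To remain inside $\mathcal{O}(S)$ you need convergence in norm (or SOT), and that cannot be extracted from boundedness alone. This is precisely why the \emph{exact} Schur--Horn theorem in II$_1$ factors is substantially harder than the approximate versions of \cite{ArMaSH,BhaRav}: the approximate version is obtained by an approximation argument of the kind you describe, whereas the exact result in \cite{SHFull} requires an iterative construction that is arranged to converge in norm.

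Your deduction of the second assertion is also incorrect. You produce masas $\mathcal{B}_n$ with $E_{\mathcal{B}_n}(S)$ unitarily conjugate to $E(U_n S U_n^*)$, hence \emph{close in norm} to $A$. But the statement demands a masa $\mathcal{B}$ with $E_{\mathcal{B}}(S) \approx A$, i.e.\ with non-increasing rearrangements coinciding \emph{exactly}. Norm proximity does not give equality of rearrangements; your final sentence conflates ``$\approx$ is closed under norm limits'' (true, but irrelevant, since you have no sequence of operators each satisfying $\approx A$) with ``operators norm-close to $A$ satisfy $\approx A$'' (false). Indeed, the paper remarks just after Theorem~\ref{SHTHM} that the two assertions are subtly different and that neither follows from the other.
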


The two statements above are subtly different and neither follows from the other. The first statement is what Kadison and Arveson referred to as the Schur-Horn theorem in type II$_1$ factors. The solution in \cite{SHFull} builds upon previous work of Argerami and Massey \cite{ArMaSH,ArMaCa}, Dykema, Hadwin, Fang and Smith \cite{DFHS} and Bhat and Ravichandran \cite{BhaRav}. 

We remark that Theorem \ref{SHTHM} above implies 
the so-called carpenter theorem in type II$_1$ factors conjectured by R.V. Kadison in \cite{KPNAS1}:
\begin{theorem}[The carpenter theorem in II$_1$ factors](Ravichandran, 2012)\label{CTHM}
Let  $\mathcal{M}$ be a type II$_1$ factor, $\mathcal{A}$ a masa in $\mathcal{M}$ and let $E$ denote the trace preserving conditional expectation onto $\mathcal A$. Given a positive contraction $A \in \mathcal{A}$ there exists an orthogonal projection $P\in\mathcal M$ such that 
\[E(P) = A\ .\]
\end{theorem}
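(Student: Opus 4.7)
The plan is to reduce the carpenter theorem directly to the Schur--Horn theorem \ref{SHTHM} by exhibiting an appropriate hermitian $S\in\mathcal M$, namely a projection, that majorizes the given positive contraction $A$. Set $t=\tau(A)\in[0,1]$ and let $Q\in\mathcal M$ be any projection of trace $t$ (which exists because $\mathcal M$ is a II$_1$ factor, so its trace takes all values in $[0,1]$ on projections). The non-increasing rearrangement of $Q$ is then $f_Q=\chi_{[0,t]}$.

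Next I would verify that $A\prec Q$ in the sense of \eqref{eq def maj in II1}. Since $0\le A\le I$, its rearrangement satisfies $0\le f_A\le 1$ and $\int_0^1 f_A\,dm=\tau(A)=t=\int_0^1 f_Q\,dm$, so the trace-preserving condition holds. For the partial-integral inequality, for $r\le t$ one has $\int_0^r f_A\,dm\le \int_0^r 1\,dm=r=\int_0^r f_Q\,dm$, while for $r>t$ one has $\int_0^r f_A\,dm\le \int_0^1 f_A\,dm=t=\int_0^r f_Q\,dm$, since $f_A\ge 0$. Hence $A\prec Q$.

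Now Theorem \ref{SHTHM} produces $T\in\mathcal O(Q)$ with $E(T)=A$. The final observation is that $T$ must itself be a projection: by definition of $\mathcal O(Q)$, there is a sequence of unitaries $U_n\in\mathcal U(\mathcal M)$ with $U_nQU_n^*\to T$ in norm, and each $U_nQU_n^*$ is a projection, so passing to the norm limit in the identities $X=X^*=X^2$ shows that $T=T^*=T^2$. Setting $P:=T$ completes the proof.

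There is no real obstacle here beyond knowing Theorem \ref{SHTHM}; the only substantive point is the routine majorization check $A\prec Q$, whose content is simply that the constant function $1$ on an interval of measure $\tau(A)$ is the ``largest'' non-increasing rearrangement compatible with $0\le A\le I$ and $\tau(A)=t$. In particular, the carpenter theorem is extracted from the Schur--Horn theorem by choosing the source operator $S$ to be the simplest hermitian operator with the correct trace and with spectrum contained in $\{0,1\}$, so that approximate unitary equivalence automatically preserves projection-ness.
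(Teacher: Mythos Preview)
Your argument is correct and is exactly the approach the paper intends: the paper does not spell out a proof of Theorem~\ref{CTHM} but simply remarks that Theorem~\ref{SHTHM} implies it, and your proposal supplies precisely the missing details (choose a projection $Q$ of trace $\tau(A)$, check $A\prec Q$, apply Theorem~\ref{SHTHM}, and observe that any element of $\mathcal{O}(Q)$ is a projection). There is nothing to add.
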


In recent work, Kennedy and Skoufranis have extended the Schur-Horn theorem above to generalise Thompson's theorem \cite{ThoSin} describing the possible singular values of a matrix with prescribed diagonal to the setting of type $II_1$ factors \cite{KenSko}. Upon the completion of this paper, we came to know that they have independently proved results in the multivariable case, analogous to ours.

Given the Schur-Horn theorem for single hermitians, a natural next problem is to investigate if there are multivariable analogues of the above results; Can the joint diagonals of tuples of commuting hermitians can be effectively described in terms of spectral relations?
There are currently no complete descriptions of this kind, even in the matrix case (see \cite{AYYHP3x3,ThompsonCount} and the reference therein); The goal of this paper is to show that this is an intriguing problem in both continuous(finite) and discrete factors.

We approach this problem by first defining a notion of multivariate majorization. In the single variable case, results of Hiai \cite{HiaiMa} show that the majorization inequalities in Eq. \eqref{eq def maj in II1} are equivalent to the existence of a doubly stochastic map $D$ (a linear, unital, positive, trace preserving map acting on $\m$) such that $D(S)=A$. With this in mind, it is natural  to define majorization for tuples as follows. 
\begin{definition}[Joint Majorization]\label{DefMaj}
Given tuples $\bold{S} = (S_1,\cdots,S_n)$ and $\bold{A} = (A_1,\cdots,A_n)$ in a type II$_1$ factor or a matrix algebra $\mathcal{M}$, say that $\bold{A}$ is majorized by $\bold{S}$ (which we will denote by $\bold{A} \prec \bold{S}$) if there is a doubly stochastic map $D$ on $\mathcal{M}$ such that $D(S_m) = A_m$ for $1 \leq m \leq n$. 
\end{definition}

In case both $\bold A$ and $\bold S$ are tuples  of commuting hermitian operators in a type II$_1$ factor, the relation $\bold A\prec \bold S$ was considered in \cite{ArMas08} (based on Hiai's work on majorization between normal operators \cite{Hiai89}), where several characterizations of this notion were shown; indeed, joint majorization between tuples of commuting hermitian operators can be characterized in terms Choquet's notion of comparison of measures and also in terms of tracial inequalities using convex functions (that we discuss in the next section). Apart from this, the fact that the set $\{\bold{A}:\bold{A} \prec \bold{S}\}$ has a pleasing topological structure (it is weak* closed as well as convex) makes this a natural notion. 

\begin{remark}[Notation] Throughout the paper we will deal with tuples of hermitians, for which we will consistently use the following notations. Suppose $\bold{S} = (S_1, \cdots, S_m)$ is a tuple of operators: the expression $\phi(\bold{S})$ where $\phi$ is a map on the algebra containing the operators will be understood to mean $\phi(\bold{S}) :=(\phi(S_1), \cdots, \phi(S_n))$. We will encounter the conditional expectation onto a masa $E$ and the trace $\tau$ used in this way on tuples repeatedly throughout this paper. Tuples of operators will always be written out in bold. We will use the notation, $\|\bold S\|:=\max\{\|S_i\|: \ 1\leq i\leq n\}$.
Additionally, we will use bold greek letters, $\pmb{\alpha}$ for instance, to refer to a tuple of scalars $\pmb{\alpha} = (\alpha_1, \cdots, \alpha_n)$. In both cases, the expression $q \bold{S}$ or $q \pmb{\alpha}$ where $q$ is a scalar (or an operator), will refer to $(q\,S_1, \cdots, q\,S_n)$ and $(q \alpha_1, \cdots, q\alpha_n)$ respectively. Finally, the expression $\bold{S} + \bold{T}$ will refer to the tuple $(S_1+T_1, \cdots, S_n+T_n)$. 
\end{remark}

\begin{example}\label{exa majorization}
Let $\mathcal A$ be a masa in the type II$_1$ factor $\mathcal M$, let $E$ be the trace preserving conditional expectation onto $\mathcal A$ and let $U\in\mathcal M$ be a unitary operator. Then, it is easy to see that $(\cdot)\mapsto E(U\,(\cdot )\, U^*)$ is a doubly stochastic map. Hence, for every
$n$-tuple $\bold{S} = (S_1,\cdots,S_n)\in\mathcal{M}^n$ 
we have that $E(U\,\bold{S}\,U^*) \prec \bold{S}$. 
\end{example}

 Let $\mathcal U(\mathcal M)$ denote the unitary group of the type II$_1$ factor $\mathcal M$ (or the algebra of $m\times m$ complex matrices). 
As in the single variable case, given $\bold{S} = (S_1,\cdots,S_n)\in\mathcal M^n$, we define 
\[\mathcal{U}(\bold{S}) := \{U\,\bold S\,U^*:\  U \in \mathcal{U}(\mathcal{M}) \}
\quad \text{ and } \quad
\mathcal{O}(\bold{S})=\overline{  \mathcal{U}(\bold{S}) }^{||}\, ,\] 
that is, the (joint) unitary orbit of $\bold{S}$ and 
the norm closed (joint) unitary orbit of $\bold{S}$, respectively.
It is easy to see that when $\bold S$ is an $n$-tuple of commuting hermitians, then
$\mathcal{O}(\bold{S})$ is not only $\|\cdot\|$-closed (by construction) but it is also closed in the SOT(strong operator topology).

\medskip

A very satisfactory multivariable Schur-Horn theorem would then be the following characterization of 
the joint diagonals of a commuting tuple of hermitians operators in a type II$_1$ factor:

\begin{statement}[Multivariable Schur-Horn]\label{conjSH1}
Let $\mathcal{M}$ be a type II$_1$ factor, $\mathcal{A}$ a masa in $\mathcal{M}$ and let $E$ denote the trace preserving conditional expectation onto $\mathcal A$. Let $\bold{A}$ and $\bold{S}$ be $n$-tuples of commuting hermitian operators with $\bold{A} \in \mathcal{A}^n$ and $\bold{S} \in \mathcal{M}^n$ such that $\bold{A} \prec \bold{S}$. Then, there is a tuple $\bold{T}$ in $\mathcal{O}(\bold{S})$ such that 
\[E(\bold{T}) = \bold{A}\ . \]
\end{statement}
Taking into account Example \ref{exa majorization}, Statement \ref{conjSH1} is equivalent to the identity
\begin{equation}\label{eq identity strong MSH}
E(\mathcal O(\bold S))=\{\bold A\in\mathcal A^n:\ \bold A\prec \bold S\}\, ,
\end{equation}
where $\bold S$ is an $n$-tuple of commuting hermitian operators in $\m$ and $E$ denotes the conditional expectation onto a masa $\mathcal A\subset\mathcal M$.
\medskip

%
Unfortunately, Statement 
\ref{conjSH1} does not hold in full generality, see section \ref{obs}. The main result in this paper is that 
Statement \ref{conjSH1}
does hold when both tuples $A$ and $S$ have finite spectrum (Theorem \ref{SHThm}). We then use this to prove the following result (see section \ref{consequences} for its proof):
\begin{theorem}[The Approximate Multivariable Schur-Horn theorem]\label{SHApp}Let $\mathcal{M}$ be a type II$_1$ factor, $\mathcal{A}$ a masa in $\mathcal{M}$ and let $E$ denote the trace preserving conditional expectation onto $\mathcal A$. 
 For any  $n$-tuple $\bold S$ of commuting hermitians in $\mathcal{M}$,
\begin{equation}\label{eqSHApp}
\overline{E(\mathcal{U}(\bold{S}))}^{||} = \{\bold{A} \in \mathcal{A}^n : \bold{A} \prec \bold{S}\}\ .
\end{equation}
\end{theorem}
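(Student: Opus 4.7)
The plan is to prove the two inclusions of \eqref{eqSHApp} separately and to deduce the harder one from the finite joint spectrum result (Theorem \ref{SHThm}) via a joint approximation argument.

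The inclusion $\overline{E(\mathcal{U}(\bold{S}))}^{\|\cdot\|} \subseteq \{\bold{A} \in \mathcal{A}^n : \bold{A} \prec \bold{S}\}$ is essentially built in. Example \ref{exa majorization} shows $E(U\bold{S}U^{*}) \prec \bold{S}$ for every $U \in \mathcal{U}(\mathcal{M})$, so $E(\mathcal{U}(\bold{S}))$ is contained in the majorization set; the latter is weak$^{*}$-closed (as noted after Definition \ref{DefMaj}), hence also norm-closed, and the inclusion follows by taking norm closures.

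For the reverse inclusion, fix $\bold{A} \prec \bold{S}$ and $\epsilon > 0$; I will produce a unitary $U \in \mathcal{U}(\mathcal{M})$ with $\|E(U\bold{S}U^{*}) - \bold{A}\| = O(\epsilon)$. Using the Borel functional calculus on the commutative von Neumann algebra $W^{*}(\bold{S})$, I would partition a compact box containing the joint spectrum of $\bold{S}$ into cells of diameter $< \epsilon$ and replace $\bold{S}$ by the tuple of step functions rounding each joint spectral value to a representative. This yields a finite-joint-spectrum tuple $\bold{S}^{(\epsilon)} \in W^{*}(\bold{S})$ with $\|\bold{S}^{(\epsilon)} - \bold{S}\| < \epsilon$. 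Letting $D$ be a doubly stochastic map on $\mathcal{M}$ witnessing $D(\bold{S}) = \bold{A}$, the tuple $\tilde{\bold{A}} := (E \circ D)(\bold{S}^{(\epsilon)}) \in \mathcal{A}^{n}$ automatically satisfies $\tilde{\bold{A}} \prec \bold{S}^{(\epsilon)}$ (since $E \circ D$ is itself doubly stochastic) and $\|\tilde{\bold{A}} - \bold{A}\| < \epsilon$ (by contractivity of $E \circ D$ and the identity $E(D(\bold{S})) = \bold{A}$). A careful discretization of $\tilde{\bold{A}}$ inside $\mathcal{A}$ then yields a finite-spectrum tuple $\bold{A}^{(\epsilon)} \in \mathcal{A}^{n}$ close to $\tilde{\bold{A}}$ and still satisfying $\bold{A}^{(\epsilon)} \prec \bold{S}^{(\epsilon)}$. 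Now Theorem \ref{SHThm} applied to the finite-spectrum pair $\bold{A}^{(\epsilon)} \prec \bold{S}^{(\epsilon)}$ provides $\bold{T}^{(\epsilon)} \in \mathcal{O}(\bold{S}^{(\epsilon)})$ with $E(\bold{T}^{(\epsilon)}) = \bold{A}^{(\epsilon)}$; choosing $U \in \mathcal{U}(\mathcal{M})$ with $\|U\bold{S}^{(\epsilon)}U^{*} - \bold{T}^{(\epsilon)}\| < \epsilon$ and combining this with $\|\bold{S} - \bold{S}^{(\epsilon)}\| < \epsilon$ and the contractivity of $E$ and of unitary conjugation, the triangle inequality gives $\|E(U\bold{S}U^{*}) - \bold{A}\| = O(\epsilon)$.

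The main obstacle is the final discretization step producing $\bold{A}^{(\epsilon)}$ from $\tilde{\bold{A}}$ while preserving exact majorization by $\bold{S}^{(\epsilon)}$. Since $\bold{S}^{(\epsilon)}$ has only finitely many joint spectral values, the condition $\bold{A}^{(\epsilon)} \prec \bold{S}^{(\epsilon)}$ reduces to a finite list of tracial/convex inequalities, and I expect that the characterizations of joint majorization in terms of convex tracial inequalities and comparison of measures from \cite{ArMas08} (mentioned after Definition \ref{DefMaj}) allow one to choose the step-function approximation of $\tilde{\bold{A}}$ finely enough that these inequalities are preserved; if a slight deficit arises, a small dilation of $\bold{S}^{(\epsilon)}$, at the cost of an additional $O(\epsilon)$ error in the final estimate, should restore exact majorization as required by Theorem \ref{SHThm}.
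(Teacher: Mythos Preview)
Your overall strategy matches the paper's: prove the easy inclusion via Example \ref{exa majorization} and norm-closedness, and for the hard inclusion approximate $\bold S$ by a finite-spectrum tuple, push through the doubly stochastic map, discretize on the $\mathcal A$-side, apply Theorem \ref{SHThm}, and collect the $O(\epsilon)$ errors.

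The one place you leave incomplete---the discretization $\bold A^{(\epsilon)}$ of $\tilde{\bold A}$ that still satisfies $\bold A^{(\epsilon)}\prec\bold S^{(\epsilon)}$---is exactly the step the paper handles, and it does so more simply than either of your proposed fixes. You do not need to appeal to the tracial-inequality characterization of majorization, nor to dilate $\bold S^{(\epsilon)}$ (which in any case would disturb the trace equality $\tau(\bold A^{(\epsilon)})=\tau(\bold S^{(\epsilon)})$ that majorization forces). Instead, pick any finite partition of unity $Q_1,\dots,Q_m$ in $\mathcal A$ fine enough that $\tilde{\bold A}$ is nearly constant on each $Q_i$, and set
\[
\bold A^{(\epsilon)}:=\sum_{i=1}^m \frac{\tau(\tilde{\bold A}\,Q_i)}{\tau(Q_i)}\,Q_i\,.
\]
This is $\epsilon$-close to $\tilde{\bold A}$ by construction, and---here is the point---it equals $\tilde\Delta(\bold S^{(\epsilon)})$ where $\tilde\Delta(X):=\sum_i \tau(Q_i)^{-1}\tau((E\circ D)(X)\,Q_i)\,Q_i$ is itself doubly stochastic. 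Hence $\bold A^{(\epsilon)}\prec\bold S^{(\epsilon)}$ holds automatically, with no deficit to repair. (Incidentally, since $\bold S^{(\epsilon)}$ has finite joint spectrum, Theorem \ref{SHThm} gives an exact unitary $U$ with $E(U\bold S^{(\epsilon)}U^*)=\bold A^{(\epsilon)}$; you do not need the extra passage through $\mathcal O(\bold S^{(\epsilon)})$.)
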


\begin{remark}\label{rem comparison}
With the notations of Theorem \ref{SHApp}, notice that this result is weaker than Statement \ref{conjSH1} as a description of the set $\{\bold A\in\mathcal A^n:\ \bold A\prec \bold S\}$: to see this, compare the identities of Eqs. 
\eqref{eq identity strong MSH}
and \eqref{eqSHApp} using the fact that, by the norm continuity of $E$, we always have that 
$$E(\mathcal O (\bold S))\subset\overline{E(\mathcal{U}(\bold{S}))}^{||} \ .  $$
\end{remark}

It can be shown (see Proposition \ref{Pro}) that for every commuting tuple of positive contractions $\bold{A}$, there is a commuting tuple of projections $\bold{P}$ such that $\bold{A} \prec \bold{P}$. It is then natural to conjecture the following extension of the carpenter theorem for type II$_1$ factors (which is a particular case of Statement \ref{conjSH1} above):
\begin{conjecture}[The Multivariable carpenter theorem]\label{MVC}
Let $\mathcal{M}$ be a type II$_1$ factor, $\mathcal{A}$ a masa in $\mathcal{M}$ and let $E$ denote the trace preserving conditional expectation onto $\mathcal A$. 
If $\bold{A}$  is an $n$-tuple of positive contractions in $\mathcal{A}$ then, there is a $n$-tuple of commuting projections $\bold{P} $ such that  
\[E(\bold{P}) = \bold{A}\ .\]
\end{conjecture}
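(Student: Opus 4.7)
The plan is to reduce Conjecture~\ref{MVC} to the approximate multivariable Schur-Horn theorem and then take a limit. By Proposition~\ref{Pro}, first choose a commuting tuple of projections $\bold P\in\mathcal M^n$ with $\bold A\prec \bold P$. Theorem~\ref{SHApp} then yields a sequence $(U_k)_{k\in\mathbb N}$ of unitaries in $\mathcal M$ such that the commuting projection tuples $\bold P_k:=U_k\bold P U_k^*$ satisfy $\|E(\bold P_k)-\bold A\|\to 0$. The whole problem reduces to extracting a subnet of $(\bold P_k)_{k\in\mathbb N}$ that converges to a commuting projection tuple $\bold Q\in\mathcal M^n$; normality of $E$ would then force $E(\bold Q)=\bold A$ and complete the proof.

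The hard part is a compactness-versus-continuity mismatch. To preserve the defining relations $Q_i^{*}=Q_i$, $Q_i^{2}=Q_i$ and $Q_iQ_j=Q_jQ_i$ under limits, one needs a topology in which multiplication is jointly continuous on bounded sets. The strong operator topology works, but the unit ball of $\mathcal M$ is not SOT-compact in general. The weak operator topology does compactify the unit ball, but multiplication is not WOT-continuous, and WOT limits of projections need not be projections (for instance, a sequence of Rademacher-type indicator functions in $L^\infty[0,1]$ converges weakly to the constant $1/2$). This is exactly what prevents a purely formal deduction of Conjecture~\ref{MVC} from Theorem~\ref{SHApp}.

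Two natural workarounds suggest themselves. The first is to pass to the tracial ultrapower $\mathcal M^{\mathcal U}$ along a free ultrafilter $\mathcal U$ on $\mathbb N$: the sequence $(\bold P_k)$ determines a commuting projection tuple $\bold Q^{\mathcal U}\in(\mathcal M^{\mathcal U})^n$, and since norm convergence implies $\|\cdot\|_2$-convergence we have $E^{\mathcal U}(\bold Q^{\mathcal U})=\bold A$ inside the image of $\mathcal A$ in $\mathcal A^{\mathcal U}$; the conjecture is then reduced to a descent problem, namely realizing this $\bold Q^{\mathcal U}$ by a tuple already in $\mathcal M$. The second is to approximate $\bold A$ in norm by finite-joint-spectrum tuples $\bold A^{(k)}$ of commuting positive contractions in $\mathcal A$, apply Proposition~\ref{Pro} together with the finite-spectrum Theorem~\ref{SHThm} to produce commuting projection tuples $\bold Q^{(k)}\in\mathcal M^n$ with $E(\bold Q^{(k)})=\bold A^{(k)}$, and then arrange the selections so that $(\bold Q^{(k)})$ is $\|\cdot\|_2$-Cauchy. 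In either route the crux is a selection principle: Theorem~\ref{SHThm} is an existence statement and does not on its own guarantee continuous dependence of the solution on the data, while the failure of Statement~\ref{conjSH1} in general indicates that no completely formal selection can exist. A successful proof will therefore likely need to exploit flexibility peculiar to the projection case—in particular the freedom to let the joint distribution of $\bold Q^{(k)}$ vary with $k$, a freedom not available inside the stronger Statement~\ref{conjSH1}.
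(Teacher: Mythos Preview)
There is no proof in the paper for you to be compared against: the statement you are attempting is labelled as a \emph{conjecture} and is left open. The paper only establishes the finite-spectrum case (Theorem~\ref{MVCT}) and the approximate version (Theorem~\ref{MVCApp}); the authors explicitly write that they ``feel that Conjecture~\ref{MVC} does hold'' but are ``unable to settle this''.

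Your proposal is not a proof either, and you are candid about this. The reduction you describe in the first paragraph is exactly the content of Theorem~\ref{MVCApp}: from Proposition~\ref{Pro} and Theorem~\ref{SHApp} one gets commuting projection tuples $\bold P_k$ with $E(\bold P_k)\to\bold A$ in norm. The obstruction you then identify --- that WOT limits of projections need not be projections, while the unit ball is not SOT-compact --- is genuine and is precisely why the paper stops at the approximate statement. Your two suggested workarounds both terminate in an unresolved ``selection principle'': in the ultrapower route you would need to descend from $\mathcal M^{\mathcal U}$ back to $\mathcal M$, which is not automatic; in the finite-spectrum route you would need to make the choices in Theorem~\ref{SHThm} cohere so that the resulting $\bold Q^{(k)}$ form a Cauchy sequence in $\|\cdot\|_2$, and nothing in the paper provides that control. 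Your closing remark, that any proof must exploit the extra freedom in the projection case (varying the joint distribution of $\bold Q^{(k)}$), is a reasonable heuristic, but it is a research programme rather than an argument.

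In short: your write-up is an honest and accurate diagnosis of why Conjecture~\ref{MVC} is hard, not a proof of it, and the paper does not contain a proof either.
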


We feel that Conjecture \ref{MVC} (the multivariable carpenter theorem) does hold; Indeed, it holds when the majorized tuple has finite spectrum (Theorem \ref{MVCT}). By the previous remarks and Theorem \ref{SHApp} we get 
the following result (see section \ref{consequences} for its proof):
\begin{theorem}[Approximate multivariable carpenter theorem]\label{MVCApp} 
Let $\mathcal{M}$ be a type II$_1$ factor, $\mathcal{A}$ a masa in $\mathcal{M}$ and let $E$ denote the trace preserving conditional expectation onto $\mathcal A$. 
If $\bold{A}$  is an $n$-tuple of positive contractions in $\mathcal{A}$ and $\varepsilon >0$ then, there is a $n$-tuple of commuting projections $\bold{P} $ such that  
\[\| E(\bold{P}) -  \bold{A}\|\leq \varepsilon\   .\]
\end{theorem}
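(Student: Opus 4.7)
The plan is to derive Theorem \ref{MVCApp} as an almost immediate corollary of the Approximate Multivariable Schur–Horn theorem (Theorem \ref{SHApp}) combined with Proposition \ref{Pro}. The strategy is to replace the hoped-for ``exact'' projection tuple of Conjecture \ref{MVC} by an approximating one obtained via unitary conjugation of a fixed projection tuple that majorizes $\bold{A}$.

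First, I would observe that since $\bold{A}$ is a tuple in the abelian algebra $\mathcal{A}$, its components automatically commute. Proposition \ref{Pro} therefore applies and produces a commuting tuple $\bold{P}$ of projections in $\mathcal{M}$ (not necessarily in $\mathcal{A}$) with $\bold{A}\prec\bold{P}$. Feeding $\bold{S}=\bold{P}$ into Theorem \ref{SHApp} and using $\bold{A}\in\mathcal{A}^n$ with $\bold{A}\prec\bold{P}$, the identity \eqref{eqSHApp} places $\bold{A}$ inside the norm closure $\overline{E(\mathcal{U}(\bold{P}))}^{\|\cdot\|}$. Hence for the prescribed $\varepsilon>0$ there is a unitary $U\in\mathcal{U}(\mathcal{M})$ with $\|E(U\,\bold{P}\,U^*)-\bold{A}\|\leq\varepsilon$. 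Setting $\bold{Q}:=U\,\bold{P}\,U^*$ yields a commuting tuple of projections (both the projection property and commutativity are preserved under unitary conjugation) satisfying $\|E(\bold{Q})-\bold{A}\|\leq\varepsilon$, which is the desired conclusion.

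There is no significant obstacle specific to this statement: the content is already packed into its two ingredients. The nontrivial work resides in Theorem \ref{SHApp} (which rests on the finite-spectrum multivariable Schur–Horn theorem plus an approximation argument) and in Proposition \ref{Pro} (construction of a projection tuple majorizing a prescribed commuting tuple of positive contractions). The only thing to verify carefully in the reduction is that Proposition \ref{Pro} delivers projections in $\mathcal{M}$ rather than in $\mathcal{A}$, which is precisely what makes the approximation possible and why only an approximate version, not the exact Conjecture \ref{MVC}, follows from this route.
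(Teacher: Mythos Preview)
Your proof is correct and follows essentially the same route as the paper: invoke Proposition \ref{Pro} to obtain a commuting tuple of projections majorizing $\bold{A}$, then apply Theorem \ref{SHApp} to extract a unitary conjugate whose conditional expectation is $\varepsilon$-close to $\bold{A}$. The only difference is cosmetic (the paper names the initial projection tuple $\bold{Q}$ and the conjugated one $\bold{P}$, whereas you swap these).
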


In another direction, it is well known that in $B(\mathcal H)$ 
there is a difference between approximate diagonals and true diagonals 
of selfadjoint operators. For example, the celebrated Kadison's Carpenter's theorem in $B(\mathcal H)$ in \cite{KPNAS2} shows
that there are some interesting obstructions for a sequence $(a_n)_{n\in\mathbb N}$ in $[0,1]$ such that $\sum_{n\in\mathbb N}a_n=\sum_{n\in\mathbb N}(1- a_n)=\infty$ to be the diagonal - with respect to an orthonormal basis of $\mathcal H$ - of an orthogonal projection, while the results from \cite{NeuSH} show that any such sequence in $[0,1]$ can be $\|\cdot\|_\infty$-approximated by diagonals of orthogonal projections in $B(\mathcal H)$ (i.e. any such sequence in $[0,1]$ is an approximate diagonal of orthogonal projections in $B(\mathcal H)$). Arveson extended Kadison's results in this matter (see \cite{ArDiPN}) and showed that there are obstructions (besides the expected trivial ones) for a sequence of complex numbers to be the diagonal of a normal operators with finite spectrum (see section \ref{MSAppB(H)} for details). The following result - that partially extends the results on approximate diagonals from \cite{NeuSH} - shows that for the normal operators considered in \cite{ArDiPN} there are no non-trivial obstructions for a sequence to be an approximate diagonal (see Section \ref{MSAppB(H)} for its proof).

\begin{theorem}[Approximate diagonals of some normal operators in $B(\mathcal H)$]\label{thm on appr diag of normals}
Let $\mathcal{A} \subset \mathcal{B}(\mathcal{H})$ be an atomic masa and let $\bold{S}$ be a collection of $n$ commuting hermitians with finite joint spectrum, with the points in the spectrum all having infinite multiplicity and lying on the vertices of a convex set $C_\mathcal X$ in $\mathbb{R}^n$. Then,
\[\overline{E(\mathcal U_\mathcal H(\bold S))}^{\, ||} = \{ \bold{A} \in \mathcal{A}^n_{sa}\, :  \ \sigma(\bold{A})\subset C_\mathcal X\}\, ,\]
where $\sigma(\bold{A})$ denotes the joint spectrum of the tuple of commuting hermitian operators $\bold A$. \end{theorem}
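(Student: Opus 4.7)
The inclusion $\overline{E(\mathcal U_\mathcal H(\bold S))}^{\,||} \subseteq \{\bold A \in \mathcal A^n_{sa} : \sigma(\bold A) \subset C_\mathcal X\}$ is direct: writing $\bold S = \sum_k v_k P_k$ for the spectral partition of $\bold S$ (with $\{P_k\}$ pairwise orthogonal of infinite rank), for any unitary $U \in \mathcal U(\mathcal H)$ and any atom $e_i$ of $\mathcal A$ the $i$-th diagonal entry of $E(U\bold S U^*)$ equals
\[
(\langle U S_1 U^* e_i, e_i\rangle, \ldots, \langle U S_n U^* e_i, e_i\rangle) = \sum_k v_k \langle U P_k U^* e_i, e_i\rangle,
\]
which is a convex combination of the vertices $v_k$ and therefore lies in $C_\mathcal X$. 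Since the atoms generate $\mathcal A$, this gives $\sigma(E(U\bold S U^*)) \subset C_\mathcal X$, and the closure of $C_\mathcal X$ preserves the containment under norm limits.

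For the reverse inclusion, fix $\bold A$ with $\sigma(\bold A) \subset C_\mathcal X$ and $\varepsilon > 0$; my strategy is to quantize $\bold A$ and then produce a matching unitary conjugate of $\bold S$ via a discrete Fourier transform construction on finite blocks. Partition $C_\mathcal X$ into finitely many Borel pieces of diameter $< \varepsilon/(2(1+\|\bold S\|))$, pick a representative $\pmb{\alpha}_j' \in C_\mathcal X$ in each piece, and define $\bold A' \in \mathcal A^n_{sa}$ by replacing the diagonal entry of $\bold A$ at each atom by the representative of the piece containing it; then $\bold A'$ has finite joint spectrum $\{\pmb{\alpha}_j'\} \subset C_\mathcal X$ and $\|\bold A - \bold A'\| < \varepsilon/2$. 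Fix convex decompositions $\pmb{\alpha}_j' = \sum_k t_{j,k} v_k$, let $H_j$ be the closed span of the atoms on which $\bold A' \equiv \pmb{\alpha}_j'$, and on each infinite-dimensional $H_j$ approximate $t_{j,k}$ by rationals $n_{j,k}/N_j$ with $\sum_k n_{j,k} = N_j$ to within $\varepsilon/(2(1+\|\bold S\|))$; partition an orthonormal basis of $H_j$ into blocks of size $N_j$. On each such block I apply the DFT construction: since the $N_j \times N_j$ DFT matrix $F$ has all entries of modulus $N_j^{-1/2}$, any ordered partition $G_1 \sqcup \cdots \sqcup G_m = \{1,\ldots, N_j\}$ with $|G_k| = n_{j,k}$ yields, via $Q_k := F R_{G_k} F^*$ (where $R_{G_k}$ is the diagonal projection onto the coordinates in $G_k$), pairwise orthogonal projections summing to the identity on the block, with the constant diagonal entry $n_{j,k}/N_j$ in $\mathcal A$.

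Assembling the blockwise projections yields pairwise orthogonal $\widetilde Q_1, \ldots, \widetilde Q_m \in B(\mathcal H)$ with $\sum_k \widetilde Q_k = I$ and $\|\sum_k v_k E(\widetilde Q_k) - \bold A'\| < \varepsilon/2$. By arranging the rational approximations so that each vertex $v_k$ receives $0 < n_{j,k} < N_j$ on at least one infinite-dimensional $H_j$, each $\widetilde Q_k$ has both infinite rank and infinite corank, matching the unitary equivalence class of $P_k$. Hence there exists $U \in \mathcal U(\mathcal H)$ with $U P_k U^* = \widetilde Q_k$ for all $k$, and then $U\bold S U^* = \sum_k v_k \widetilde Q_k$ gives $\|E(U\bold S U^*) - \bold A\| < \varepsilon$ by the triangle inequality. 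The main technical obstacle is the combinatorial bookkeeping at this final step: one must verify that it is always possible to choose the convex decompositions of the $\pmb{\alpha}_j'$ and the rational approximations of the $t_{j,k}$ simultaneously so that every $\widetilde Q_k$ ends up infinite-rank with infinite corank. Finite-dimensional $H_j$ and degenerate cases (where some $\pmb{\alpha}_j'$ coincides with a vertex) can be absorbed into an auxiliary infinite-dimensional ``buffer'' block, obtained by slightly perturbing the quantization, on which the DFT construction supplies the missing multiplicities for each $v_k$ without affecting the overall norm estimate.
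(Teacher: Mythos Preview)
Your argument is correct and follows a genuinely different route from the paper's. The paper first proves an \emph{exact} result (the unnamed Proposition preceding the theorem): any sequence with finitely many distinct values, all lying in the relative interior of $C_\mathcal X$, is realized exactly as a diagonal of a unitary conjugate of $\bold S$. That exact result is obtained via the combinatorial Lemma~\ref{dec} (writing an interior point as a rational convex combination of \emph{pairs} of vertices), Kadison's Pythagorean theorem (to realize two-point spectra with constant diagonal), and the DFT averaging Lemma~\ref{const}. The theorem then follows by a one-line density argument. Your approach bypasses Kadison's theorem and Lemma~\ref{dec} entirely: you quantize $\bold A$, approximate the barycentric coefficients of each quantized value by rationals, and run the DFT construction directly on blocks of size $N_j$ to produce projections with the right constant diagonals; the unitary is then obtained by matching infinite-rank, infinite-corank projections. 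This is more elementary and self-contained, at the cost of only yielding an approximate (rather than exact) diagonal at the intermediate step---which is all the theorem requires. Your acknowledged bookkeeping issue (ensuring each $\widetilde Q_k$ has infinite rank and corank) is genuinely resolvable along the lines you sketch: since at least one $H_j$ is infinite-dimensional, perturb its representative $\pmb\alpha_j'$ into the relative interior of $C_\mathcal X$ and choose a convex decomposition with all coefficients strictly positive (write $\pmb\alpha_j' = (1-\epsilon)z + \epsilon\cdot\text{centroid}$ for small $\epsilon$); the rational approximation then preserves $0 < n_{j,k} < N_j$, and the finitely many atoms in any finite-dimensional $H_j$ can be reassigned to this buffer piece at negligible cost. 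What the paper's route buys is the sharper intermediate statement about exact diagonals; what your route buys is independence from Kadison's Pythagorean theorem.
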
 

\subsection{Obstructions to a multivariable Schur-Horn theorem}\label{obs}
We now discuss a couple of obstructions to a multivariable Schur-Horn theorem. The first, in the context of matrix algebras is due to Arveson, who observed this when analyzing the possible diagonals of normal operators in \cite{ArDiPN}. He noted that the problem of characterizing diagonals of normal operators is equivalent to characterizing the joint diagonals of a pair of commuting hermitians.

In the matrix context, given $d\times d$ normal matrices $A$ and $S$ with eigenvalues $\mu,\,\lambda\in \mathbb C^d$ respectively then $A\prec S$ (or equivalently the commuting two tuple $(\text{Re}(A),\text{Im}(A))$ is majorized by the commuting two tuple  $(\text{Re}(S),\text{Im}(S))$) if and only if there exists an $d\times d$ doubly stochastic matrix $D$ (i.e. $D$ has non-negative entries and the sum of the entries in each row and column equals 1) such that $D\lambda=\mu$ 
(see Proposition \ref{matrix} below).

 Now, let $A$ and $S$ be $3 \times 3$ normal complex matrices with eigenvalues $\mu = (\dfrac{1}{2},\dfrac{i}{2},\dfrac{1+i}{2})$ and $\lambda =(1,0,i)$. Let $D$ be the doubly stochastic matrix
\begin{equation}\label{exa ds not uni}D = \left(\begin{array}{ccc}
\frac{1}{2} & \frac{1}{2} & 0\\
0 & \frac{1}{2}& \frac{1}{2} \\
\frac{1}{2} & 0 & \frac{1}{2} \end{array} \right)\ .
\end{equation}
Then, we have that $D\lambda = \mu$. However, there is no $3\times 3$ unitary $U$ so that $E_3(USU^*) = A$, where $E_3$ is the restriction to the diagonal, as a map acting on $3\times 3$ matrices. Interestingly, by doubling the multiplicities both of the eigenvalues and the diagonal entries, we can resolve the problem. This is discussed in section \ref{secDS}.

\begin{remark}\label{rem MSH fails in matrices}
Let $\mathcal M_d$ denote the algebra of $d\times d$ complex matrices and let $\mathcal A_d\subset \mathcal M_d$ denote the 
diagonal masa in $\mathcal M_d$. Let $E_d$ be, as before, the restriction to the diagonal (i.e., the trace preserving conditional expectation onto $\mathcal A_d$).
The previous example shows that in general, for a
$n$-tuple $\bold S$ of commuting hermitian matrices in $\mathcal M_d$,
the closed sets
$$ E_d(\mathcal U(\bold S)) \quad \text{ and } \quad \{\bold A\in \mathcal A_d^n:\ \bold A\prec \bold S\}  $$
are not equal, as opposed to the one variable case ($d=1$) in which the equality of 
these sets (for every such $\bold S=S$) is equivalent to the Schur-Horn theorem for matrices. 
\end{remark}

\medskip

In light of Remark \ref{rem MSH fails in matrices} we point out that Theorem \ref{SHApp} is somewhat unexpected, as its finite dimensional (matrix) version does not hold (as opposed to Theorem \ref{SHTHM} - which is the natural extension to the type II$_1$ factor setting of the classical Schur-Horn theorem). 

\medskip

The above obstruction does not emerge in type II$_1$ factors, due to their ``diffuseness''; There is however a different kind of obstruction that arises because of this very ``diffuseness''. Let $\mathcal{R}$ be the hyperfinite type II$_1$ factor, let $\mathcal{A}$ be a masa in $\mathcal{R}$ and let $E$ denote the trace preserving conditional expectation onto $\mathcal A$. Choose a unitary $U$ that generates $\mathcal{A}$. Next, consider the masa $\mathcal{A} \vnotimes \mathcal{A}$ inside $\mathcal{R} \vnotimes \mathcal{R}$ and consider the tuple $(U \otimes I,I \otimes U)$. The map $\Delta$ from $\mathcal{R} \vnotimes \mathcal{R}$ to $\mathcal{R}$ (considered as a map from $\mathcal{R}$ to itself) given by 
\[\Delta(X \otimes Y) = \tau(Y)\, E(X)  \]
on simple tensors and extended linearly 
to all of $\mathcal{R} \vnotimes \mathcal{R}$ is a doubly stochastic map. We have that
\[\Delta(U \otimes I) = U, \qquad \Delta(I \otimes U) = \tau(U)\, I\]
that is, 
\[(U,\tau(U)\,I) \prec (U \otimes I,I \otimes U)\ .\] 
Notice that these relations correspond to a joint majorization between the $4$-tuples of commuting hermitians 
given by real and imaginary parts of the normal operators in each 2-tuple.
However, there is no element $\bold{T}=(A,B)$ in $\mathcal{O}((U \otimes I,I \otimes U))$ so that $E(\bold{T}) = (U,\tau(U)\,I)$. This is seen as follows: Let $U_n$ be a sequence of unitaries so that both $U_n (U \otimes I) U_n^*$ and $U_n (I \otimes U) U_n^*$ converge in norm to the commuting unitary operators $A$ and $B$ respectively and additionally, \[E(A) = U\, .\] We note that 
\begin{eqnarray*}
 \tau((A-U)^{*}(A-U)) &=& \tau(A\,A^{*}) - 2 \operatorname{Re}\tau(A^{*}\,U) + \tau(U\, U^{*})\\
&=&  2 - 2\operatorname{Re}\tau(E(A)^{*}\, U)=0\, .
\end{eqnarray*}
implying that $A = U$. Consequently, noting that $U$ generates $\mathcal{A}$ and that $A$ commutes with $B$, we see that $B$ also lies in $\mathcal{A}$. Hence, $E(B)=B\in \mathcal O(I\otimes U)$ cannot possibly be a scalar. This rules out Statement \ref{conjSH1}, which was the natural (straightforward) extension of the Arveson-Kadison type Schur-Horn theorem. 

\begin{remark}\label{rem diag vs approx diag}
Let $\m$ be a type II$_1$ factor, let $\a\subset \m$ be a masa and let $E$ denote the trace preserving conditional expectation onto $\a$. As a consequence of the results in \cite{SHFull}, we get that for an hermitian operator $S\in \m$ 
then $$ E(\mathcal O(S))=\overline{E(\mathcal{U}(S))}^{||}$$ i.e. diagonals and approximate diagonals of $S$ coincide.
On the other hand, the previous example shows that in the multivariable case the sets 
$$ E(\mathcal O(\bold S)) \quad \text{ and }\quad \overline{E(\mathcal{U}(\bold S))}^{||}
$$ of joint diagonals and approximate joint diagonals of $\bold S$ do not coincide (which is in accordance with the distinction between diagonals and approximate diagonals of some normal operators in $B(\mathcal H)$, see the comments before Theorem \ref{thm on appr diag of normals}). Thus, there are obstructions for an tuple $\bold A\in\a^n$ to be a joint diagonal of $\bold S$. The nature of these obstructions is not yet understood.  
\end{remark}

\subsection{Outline of the paper}
We collect a few facts about Joint majorization in section \ref{sec2}, most importantly relating it to Choquet's notion of comparison of measures. We also provide a way of concretely checking this when both tuples of operators have finite spectrum. Section \ref{sec3} contains a proof of the Schur-Horn theorem when both tuples have finite spectrum - We accomplish this by first proving the result when the majorized tuple consists of scalars; we then use the scalar case
to prove the result in case both commuting tuples have finite spectrum, using the equivalence between majorization and Choquet's notion of comparison of measures. We then collect consequences of this theorem, 
including an approximate Schur-Horn theorem for general tuples of commuting hermitians and 
an approximate multivariable carpenter theorem. In section \ref{sec4}, we adapt an idea of Dykema et al \cite{DFHS} to show that we can find diffuse abelian orthogonal subalgebras of masas (in the sense of Popa) and use this to show the partial validity of Statement \ref{conjSH1} 
in certain II$_1$ factors.
In section \ref{secDS}, we digress to 
discuss the situation in matrix algebras, obtaining some approximate representations of the action of doubly stochastic 
matrices in terms of dilations. We end this section with a simple characterization of approximate diagonals of tuples of commuting hermitians with finite spectrum (with each element in the spectrum being of infinite multiplicity).

\section{Joint majorization in type II$_1$ factors}\label{sec2}
Let $\mathcal{M}$ be a type II$_1$ factor.  Given tuples of operators $\bold{A} = (A_1,\cdots,A_n)$ and $\bold{S} = (S_1,\cdots,S_n)$ in $\mathcal{M}$, recall(Definition \ref{DefMaj}) that $\bold{A}$ is majorized by $\bold{S}$, denoted $\bold{A} \prec \bold{S}$ if there is a doubly stochastic map $D$ (unital, positive and trace preserving) from $\mathcal{M}$ to itself such that $D(\bold{S}) = \bold{A}$. 

When both $\bold A$ and $\bold S$ are tuples of commuting hermitians, the relation $\bold A\prec \bold S$ is equivalent to any of the following conditions (see \cite{ArMas08}):
\begin{enumerate}\label{EqMaj}
 \item For every convex function $f: \mathbb{R}^n \rightarrow \mathbb{R}$, we have that 
 \[\tau(f(A_1,\cdots,A_n)) \leq \tau(f(S_1,\cdots,S_n))\ .\]
 \item We have that
 \[\bold{A} \in \overline{\operatorname{conv}\{\mathcal{O}(\bold{S})\}}^{\operatorname{w}^*} = \overline{\operatorname{conv}\{U\bold{S}U^{*}:\ U\in \mathcal U(\mathcal M)\}}^{\operatorname{w}^*}\ .\]
 \item 
 Let $\mu$ and $\nu$ be the (joint) scalar spectral measures of $\bold A$ and $\bold S$ respectively; that is, for every polynomial in $n$ variables, we have that
 \[\tau(p(A_1,\cdots,A_n)) = \int_{\mathbb{R}^n} p(x_1,\cdots,x_n)\ d\mu\]
 and similarly for $\bold S$ and $\mu$. Then, for every tuple of positive Borel measures $\mu_1, \cdots, \mu_m$ such that 
 $\sum_{i=1}^{m} \mu_i = \mu$, there are positive Borel measures $\nu_1,\cdots,\nu_m$ such that $\sum_{i=1}^{m} \nu_i = \nu$ and $$\int_{\mathbb R^n} x_j \ d\nu_i = \int_{\mathbb R^n} x_j \ d\mu_i  \ , \quad 1\leq i\leq m \ , \ 1\leq j\leq n\ .$$
In this case, say that $\nu$ majorizes $\mu$ in Choquet's sense and write $\mu \prec \nu$.
\end{enumerate}

Using the description using Choquet's comparison of measures in item 3 above, we can describe joint majorization in operator algebraic language as follows
\begin{remark} Given two tuples $\bold{A}$ and $\bold{S}$ in a masa $\mathcal{A}$, we have that $\bold{A} \prec \bold{S}$ iff for every partition of the identity, $P_1 + \cdots + P_m = I$ into mutually orthogonal projections in $\mathcal{A}$, we have a partition of the identity, $Q_1 + \cdots + Q_m = I$ into mutually orthogonal projections  in $\mathcal{A}$ so that 
\[\tau(P_j) = \tau(Q_j)\quad \text{and} \quad \tau(\bold{A}P_j) = \tau(\bold{S}Q_j)\quad \text{for} \quad 1 \leq j \leq m\ .\]
\end{remark}
It is easy to see that the set $\{\bold{A} \in \mathcal{A}^n \mid \bold{A} \prec \bold S\}$ is a convex, weak* closed set.  

\begin{proposition}\label{Pro} Let $\mathcal M$ be a type II$_1$ factor. If $\bold{A}$ is a tuple of commuting positive contractions then there exists a tuple of commuting projections $\bold P$ such that $\bold A\prec \bold P$.
\end{proposition}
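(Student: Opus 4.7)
The plan is to exploit the barycentric structure of the unit cube: every point of $[0,1]^n$ is a canonical convex combination of the $2^n$ vertices in $\{0,1\}^n$, and this lets one replace the joint spectral measure of $\bold{A}$ by an atomic measure supported on the vertices that dominates it in the convex-function sense of item 1 in the characterization of joint majorization.

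To make this concrete, let $\mu$ be the joint spectral measure of $\bold{A}$, which (since the $A_i$ are commuting positive contractions) is a Borel probability measure on $[0,1]^n$. For each $v \in \{0,1\}^n$ I would introduce the multilinear interpolation weight
\[\lambda_v(x) \ := \ \prod_{i=1}^n x_i^{v_i}(1-x_i)^{1-v_i}, \qquad x \in [0,1]^n.\]
A short computation gives $\lambda_v(x)\geq 0$, $\sum_v \lambda_v(x)=1$, and the barycentric identity $x=\sum_v \lambda_v(x)\,v$ (using $\sum_{v:v_i=1}\lambda_v(x)=x_i$). I would then define an atomic probability measure $\nu$ on $\{0,1\}^n$ by $\nu(\{v\}):=\int_{[0,1]^n}\lambda_v\,d\mu$.

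Next I would realise $\nu$ as the joint distribution of a commuting tuple of projections inside $\mathcal{M}$. Since $\mathcal{M}$ is a type II$_1$ factor, the identity can be partitioned into mutually orthogonal projections $\{Q_v\}_{v\in\{0,1\}^n}$ with $\tau(Q_v)=\nu(\{v\})$; setting $P_i:=\sum_{v:v_i=1}Q_v$ produces a commuting tuple $\bold{P}=(P_1,\ldots,P_n)$ of projections whose joint spectral measure is exactly $\nu$.

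Finally, to check $\bold{A}\prec\bold{P}$ I would invoke the convex-function characterisation (item 1 above). For any continuous convex $f:\mathbb{R}^n\to\mathbb{R}$, Jensen's inequality applied to the convex combination $x=\sum_v \lambda_v(x)v$ yields $f(x)\leq \sum_v \lambda_v(x)f(v)$ pointwise on $[0,1]^n$, and integrating against $\mu$ gives
\[\tau(f(\bold{A}))\ =\ \int f\,d\mu \ \leq \ \int \sum_v \lambda_v(x)f(v)\,d\mu(x) \ =\ \sum_v \nu(\{v\})f(v)\ =\ \tau(f(\bold{P})).\]
I do not expect any genuine obstacle; the argument is essentially measure-theoretic book-keeping relying on the abundant supply of projections in a type II$_1$ factor, and the only non-routine ingredient is the classical barycentric identity for the weights $\lambda_v$.
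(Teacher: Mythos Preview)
Your argument is correct and genuinely different from the paper's. The paper proceeds by first partitioning the identity into projections $Q_i$ on which the $A_j$ are totally ordered, then within each block uses the differences $Q_i(A_{\sigma_i(j)}-A_{\sigma_i(j-1)})$ and invokes the single-variable carpenter theorem to build an explicit doubly stochastic map $T$ with $T(\bold{P})=\bold{A}$, where the $P_j$ are nested (chain) projections inside each block. Your approach instead works directly with the joint spectral measure, exploits the tensor-product barycentric decomposition of the cube $[0,1]^n$ over its $2^n$ vertices, and verifies majorization via the convex-function characterisation (item~1) rather than by exhibiting a doubly stochastic map. What you gain is economy: you avoid any appeal to the one-variable carpenter/Schur--Horn theorem and the argument is essentially a two-line Jensen computation once the weights $\lambda_v$ are written down. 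What the paper's construction gains is a sharper structural conclusion: its $\bold{P}$ has the extra feature that on each block the projections $P_1,\ldots,P_n$ form a chain (so the joint spectrum of $\bold{P}$ meets only the $n{+}1$ ``staircase'' vertices of the cube in each block, rather than potentially all $2^n$), and an explicit doubly stochastic map is produced, which is sometimes useful downstream.
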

\begin{proof}
Take a partition $Q_1+\cdots+Q_m=I$ into mutually orthogonal projections such that for each $1\leq i\leq m$, $Q_i$ commutes with $\bold A$ and such that there exists a permutation $\sigma_i$ of $\{1,\ldots,n\}$ with $Q_i A_{\sigma_i(j)}\leq Q_i A_{\sigma_i(j+1)}$ for $1\leq j\leq n-1$. Now set $R_{i,1}=Q_i A_{\sigma_i(1)}$ and $R_{i,j}= Q_i(A_{\sigma_i(j)}-A_{\sigma_i(j-1)})$ for $2\leq j\leq n$ and $1\leq i\leq m$.
Notice that for $1\leq i\leq m$ and $1\leq j\leq n$ we have 
 $$R_{i,n+1}:= Q_i-\sum_{k=1}^nR_{i,k}=Q_i-\max\{Q_iA_1,\ldots,Q_iA_n\}\geq 0
\ \ \text{ and } \ \ Q_iA_j= \sum_{k=1} ^{\sigma_i^{-1}(j)} R_{i,k}\ .$$
Consider a partition $Q_{i,1}+\cdots+Q_{i,n+1}=Q_i$ such that $\tau(Q_{i,j})=\tau(R_{i,j})$, for $1\leq i\leq m$; hence, 
by the one variable carpenter theorem, there exist doubly stochastic maps $T_{i,j}$ acting on $\mathcal M$ such that
$T_{i,j}(Q_{i,j})=R_{i,j}$ for $1\leq i \leq m$ and $1\leq j\leq n+1$. Finally, set 
$$T(\cdot)=\sum_{i=1}^m\sum_{j=1}^{n+1} T_{i,j}(Q_{i,j} \cdot Q_{i,j}) \ \ \text{ and } \ \ P_j=\sum_{i=1}^m\sum_{k=1}^{\sigma_i^{-1}(j)} Q_{i,k} \ , \quad 1\leq j\leq n\ .$$
It is easy to see that $T$ is a doubly stochastic map acting on $\mathcal M$ and that $\bold P=(P_1,\cdots,P_n)$ is a commuting tuple of projections such that $T(\bold P)=\bold A$ so that  $\bold A\prec \bold P$.
\end{proof}

\medskip

It is now natural to conjecture the multivariable ``carpenter'' theorem, Conjecture \ref{MVC}. While we are unable to settle this, we do prove it in the case when the operators in $\bold A$ have finite joint spectrum (see Theorem \ref{MVCT}). 

\begin{remark}\label{Unit}
In analogous fashion to Proposition \ref{Pro}, any normal contraction $A$ is majorized by a unitary. If the contraction has finite spectrum, the unitary may be taken to have finite spectrum as well. 
\end{remark}

\begin{remark}\label{rem: projections are maximal} Let $\mathcal M$ be a type II$_1$ factor. We point out that the only $n$-tuples of commuting positive contractions in $\mathcal M$ that are not majorized by another $n$-tuple of  positive commuting contractions except in a trivial way (approximately unitarily equivalent tuples joint majorize each other) are those all of whose elements are projections. This is routine and we omit the proof. 
\end{remark}

 In the one variable case, majorization can be described by a pleasing set of inequalities. Given a hermitian operator $A$ with increasing rearrangement $f_A$, define the numbers 
 \[\lambda_A(t) = \sup_{\tau(P) = t} \tau(AP)=\int_0^t f_A(x)\ dx.\] Then, given two hermitian operators, $A$ and $S$, we have that $A \prec S$ iff $\lambda_A(t) \leq \lambda_S(T)$ for $t \in [0,1]$ and $\tau(A) = \tau(S)$. Note that for any $t \in [0,1]$, the functions $A \rightarrow \lambda_A(t)$ are operator convex functions.

We have been unable to find such a pleasant characterization in the multivariable case. However, when both tuples $\bold{A}$ and $\bold{S}$ have finite spectrum, it is possible to give a concrete easily checkable description of joint majorization.

%
%
%
%
%
%
%

\begin{proposition}\label{matrix}Let $\mathcal M$ be a type II$_1$ factor.
Let $\bold P=(P_1, \cdots, P_k)$  $\bold Q=(Q_1,\cdots, Q_m)$ be two tuples of mutually orthogonal projections in $\mathcal M$ summing up to $I$. Let $\pmb\alpha_i,\, \pmb\beta_j\in \R^n$ for $1\leq i\leq k$, $1\leq j\leq m$ and let  
 $\bold A,\,\bold S\in \mathcal M$ be the $n$-tuples of commuting hermitian operators given by
\[S = \sum_{i=1}^k \pmb{\alpha}_i P_i\ , \quad A = \sum_{i = 1}^m \pmb{\beta}_i Q_i\ .\]
Then, $\bold{A} \prec \bold{S}$ iff there is a $m \times k$ matrix of non-negative numbers, $D = (d_{ij})_{1 \leq i \leq m,\, 1 \leq j \leq k}$ such that 
\begin{eqnarray}\label{matr}
D \, 1_k = 1_m ,\quad q^t D = p^t \quad \text{and} \quad D\, \pmb{\alpha}  = \pmb{\beta} \, .
\end{eqnarray}
Here, $p$ is the column vector $\tau(\bold P)^t$, $q$ is the column vector $\tau(\bold Q)^t$, $\pmb{\alpha}$ is the $k \times n$ matrix  $[\pmb{\alpha}_1 \mid \cdots \mid \pmb{\alpha}_k]^t$ and $\pmb{\beta}$ is the $m \times n$ matrix  $[\pmb{\beta}_1 \mid \cdots \mid \pmb{\beta}_m]^t$ .\end{proposition}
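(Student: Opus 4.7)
The plan is to prove the two implications separately, relying on the equivalent characterization of joint majorization via Choquet's comparison of scalar spectral measures (item 3 in the list at the start of Section \ref{sec2}).

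For the implication ($\bold A \prec \bold S$) $\Rightarrow$ (existence of $D$), I would pass to the scalar spectral measures. Since $\bold S$ and $\bold A$ have finite joint spectrum, $\nu = \sum_{i=1}^k p_i \delta_{\pmb\alpha_i}$ and $\mu = \sum_{j=1}^m q_j \delta_{\pmb\beta_j}$. Without loss of generality, one may assume the $\pmb\alpha_i$ and $\pmb\beta_j$ are distinct (coalesce repeated atoms; the matrix is recovered at the end by proportional redistribution across the coalesced indices). Decompose $\mu = \sum_{j=1}^m \mu_j$ with $\mu_j := q_j \delta_{\pmb\beta_j}$. By Choquet comparison applied to this decomposition, there exist positive measures $\nu_1,\ldots,\nu_m$ with $\sum_j \nu_j = \nu$ and $\int x_\ell\,d\nu_j = \int x_\ell\,d\mu_j = q_j (\pmb\beta_j)_\ell$ for every coordinate $\ell$; the total-mass condition $\nu_j(\RR^n) = q_j$ is automatic (or can be extracted from the definition). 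Each $\nu_j$ is supported on $\{\pmb\alpha_i\}$, so write $\nu_j = \sum_i c_{ji}\delta_{\pmb\alpha_i}$ with $c_{ji}\ge 0$. Setting $d_{ji}:=c_{ji}/q_j$ (after discarding the trivial case $q_j=0$) yields $\sum_i d_{ji}=1$ (row sums), $\sum_j q_j d_{ji} = \sum_j c_{ji} = p_i$ (weighted column sums), and $\sum_i d_{ji}\pmb\alpha_i = \pmb\beta_j$, which are exactly the three conditions in \eqref{matr}.

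For the converse ($\Leftarrow$), given such a $D$, I would construct an explicit doubly stochastic map sending $\bold S$ to $\bold A$. Using that $\mathcal M$ is a type II$_1$ factor, refine each $P_i$ into a pairwise orthogonal sum $P_i = \sum_{j=1}^m P_{ji}$ of subprojections with $\tau(P_{ji}) = q_j d_{ji}$; consistency with $\tau(P_i)=p_i$ is the condition $q^t D = p^t$. Analogously decompose each $Q_j = \sum_{i=1}^k Q_{ji}$ with $\tau(Q_{ji}) = q_j d_{ji}$; consistency with $\tau(Q_j)=q_j$ is the condition $D\mathbf 1_k = \mathbf 1_m$. Because $\mathcal M$ is a II$_1$ factor, there is a unitary $U \in \mathcal M$ with $U P_{ji} U^* = Q_{ji}$ for all $i,j$, whence
\[
U\bold S U^* \;=\; \sum_{i,j} \pmb\alpha_i\, Q_{ji}.
\]
Let $\mathcal B$ be the finite-dimensional abelian subalgebra generated by $Q_1,\ldots,Q_m$ and let $E_{\mathcal B}$ be the trace-preserving conditional expectation onto $\mathcal B$. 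Applying $E_{\mathcal B}$ coordinate-wise:
\[
E_{\mathcal B}(U\bold S U^*) \;=\; \sum_{j=1}^m \Bigl(\sum_{i=1}^k d_{ji}\,\pmb\alpha_i\Bigr) Q_j \;=\; \sum_{j=1}^m \pmb\beta_j\, Q_j \;=\; \bold A,
\]
using $D\pmb\alpha = \pmb\beta$. The map $\Phi(X) := E_{\mathcal B}(UXU^*)$ is a composition of two doubly stochastic maps and hence doubly stochastic, proving $\bold A \prec \bold S$.

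The only real technicality will be the bookkeeping in the forward direction if one does not assume the $\pmb\alpha_i$ and $\pmb\beta_j$ are distinct: one must redistribute the measures $\nu_j$ among coincident indices so that the column-sum condition $\sum_j c_{ji}=p_i$ holds at the level of indices, not just at the level of support points. A short transportation/proportional-allocation argument handles this. The backward direction is essentially bookkeeping inside a II$_1$ factor and should present no serious difficulty.
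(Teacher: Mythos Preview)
Your proof is correct, but both directions take a different (and somewhat longer) route than the paper's.

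For the forward implication, the paper works directly from the definition of joint majorization via a doubly stochastic map $\Delta$ with $\Delta(\bold S)=\bold A$: one simply sets $d_{ij}=\tau(Q_i)^{-1}\tau(\Delta(P_j)Q_i)$ and the three conditions in \eqref{matr} fall out from unitality, trace preservation, and $\Delta(\bold S)=\bold A$ respectively. This avoids the detour through Choquet comparison, and in particular sidesteps the two technicalities you flag (distinctness of the $\pmb\alpha_i$, and the total--mass condition for the $\nu_j$'s, which indeed is not literally contained in item 3 as stated and needs your augmentation trick to recover).

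For the converse, the paper again writes down an explicit formula: $\Delta(T)=\sum_{i,j} d_{ij}\,\tau(P_j)^{-1}\tau(P_jTP_j)\,Q_i$, and checks directly that this is doubly stochastic with $\Delta(\bold S)=\bold A$. Your construction via refinement of projections, a unitary matching them, and a conditional expectation onto the algebra generated by the $Q_j$'s is perfectly valid, but it uses the II$_1$--factor structure in an essential way (to produce subprojections of prescribed trace and the unitary $U$), whereas the paper's formula only uses the trace. On the other hand, your construction yields a doubly stochastic map of the specific form $E_{\mathcal B}(U\cdot U^*)$, which is a little more informative.
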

\begin{proof}
Let $\{P_1, \cdots, P_k\}$ and $\{Q_1, \cdots, Q_m\}$ be sets of mutually orthogonal projections summing up to $I$ in $\m$ 
and such that 
\[\bold{S} = \sum_{i = 1}^{k} \pmb{\alpha}_i\, P_i ,\quad \bold{A} = \sum_{i = 1}^{m} \pmb{\beta}_i \, Q_i\, ,\]
for $n$-tuples of real numbers $\pmb{\alpha}_i$ and $\pmb{\beta}_i$. 
Assume further that $\bold A\prec \bold S$ and let $\Delta$ be a doubly stochastic map acting on $\m$ such that $\Delta(\bold S)=\bold A$. Define $$ d_{ij}=\tau(Q_j)^{-1}\ \tau(\Delta(P_i)\,Q_j)\geq 0\ , \quad 1\leq i\leq m\ , \ 1\leq j\leq k\, .$$ If we set $D = (d_{ij})_{1 \leq i \leq m,\, 1 \leq j \leq k}$ then $D$ satisfies the conditions in Eq.  \eqref{matr}. Conversely, given a matrix $D$ satisying Eq. \eqref{matr} we set 
$$ \Delta(T)=\sum_{i=1}^m\,\sum_{j=1}^k d_{ij}\ \tau(P_i\,T\,P_i)\, Q_j \ , \quad T\in \m\,.$$
It is straightforward to check that $\Delta$ is a doubly stochastic map acting on $\m$, such that 
$\Delta(\bold S)=\bold A$.
\end{proof}

\medskip

Note that when all the projections have the same trace, this reduces to the statement that the matrix $D$ is doubly stochastic. The condition in Proposition \ref{matrix} above can be checked using linear programming. This result has a simple and pleasing consequence in the very special case when the majorizing $n$-tuple has finite joint spectrum which lies on the vertices of a simplex in $\mathbb{R}^n$. 
\begin{proposition}\label{prop joint simplex}
Let $\bold{S}$ be a $n$-tuple of commuting hermitians in a type II$_1$ factor such that the joint spectrum consists of $n+1$ points that form a simplex in $\mathbb{R}^n$. Then 
\begin{equation}\label{eq two sets}
\{\bold{A} \in \mathcal{M}_{sa}^n :\  \bold{A} \prec \bold{S}\} = \{\bold{A} \in \mathcal{M}_{sa}^n : \ \sigma(\bold{A}) \subset \operatorname{conv}(\sigma(\bold{S})), \, \tau(\bold{A}) = \tau(\bold{S})\}\ .
\end{equation}
\end{proposition}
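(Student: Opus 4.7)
I would prove both inclusions separately, with the simplex hypothesis entering crucially in the reverse direction through the affine independence of the vertices.

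Let $\sigma(\bold{S}) = \{v_0, \ldots, v_n\} \subset \mathbb{R}^n$, and let $\ell_0, \ldots, \ell_n \colon \mathbb{R}^n \to \mathbb{R}$ be the associated barycentric coordinate functions — affine maps satisfying $\ell_i(v_j) = \delta_{ij}$ and $\sum_i \ell_i \equiv 1$, which are nonnegative precisely on $\operatorname{conv}\sigma(\bold{S})$. Write $\bold{S} = \sum_i v_i P_i$ with $P_i := \chi_{\{v_i\}}(\bold{S})$ pairwise orthogonal summing to $I$.

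For the inclusion $\subseteq$, assume $\bold{A} \prec \bold{S}$ via the doubly stochastic map $D$. Trace preservation applied coordinatewise yields $\tau(\bold{A}) = \tau(\bold{S})$. Since each $\ell_i$ is affine and $D$ is linear and unital, I can commute $D$ past $\ell_i$ to get $\ell_i(\bold{A}) = D(\ell_i(\bold{S}))$. Since $\ell_i \geq 0$ on $\sigma(\bold{S})$, the operator $\ell_i(\bold{S})$ is positive, and positivity of $D$ forces $\ell_i(\bold{A}) \geq 0$. By joint functional calculus, $\ell_i \geq 0$ on $\sigma(\bold{A})$ for every $i$, giving $\sigma(\bold{A}) \subset \operatorname{conv}\sigma(\bold{S})$.

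For the inclusion $\supseteq$, suppose $\sigma(\bold{A}) \subset \operatorname{conv}\sigma(\bold{S})$ and $\tau(\bold{A}) = \tau(\bold{S})$. Define the commuting positive operators $L_i := \ell_i(\bold{A}) \in W^*(\bold{A})$. The pointwise identities $\sum_i \ell_i(y) = 1$ and $\sum_i \ell_i(y) v_i = y$ on $\operatorname{conv}\sigma(\bold{S})$ give, via functional calculus, $\sum_i L_i = I$ and $\sum_i (v_i)_k L_i = A_k$ for each coordinate $k$. The key point is then $\tau(L_i) = \tau(P_i)$ for all $i$: both tuples $(\tau(L_i))_i$ and $(\tau(P_i))_i$ are nonnegative, sum to $1$, and satisfy $\sum_i \tau(L_i) v_i = \tau(\bold{A}) = \tau(\bold{S}) = \sum_i \tau(P_i) v_i$; since the $v_i$ are affinely independent (being vertices of an $n$-simplex in $\mathbb{R}^n$), the barycentric coordinates of $\tau(\bold{S})$ are unique, forcing the equality.

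With these ingredients, define
\[
D(T) \;=\; \sum_{i\,:\, P_i \neq 0} \frac{\tau(P_i T P_i)}{\tau(P_i)}\, L_i, \qquad T \in \mathcal{M}.
\]
It is straightforward to check that $D$ is linear and positive, that $D(I) = \sum_i L_i = I$, and that $\tau(D(T)) = \sum_i \tau(P_i T P_i) = \tau(T)$ using the identity $\tau(L_i) = \tau(P_i)$. Finally, since $P_j P_i P_j = \delta_{ij}P_i$, one computes $D(P_i) = L_i$, whence $D(S_k) = \sum_i (v_i)_k L_i = A_k$. Thus $D$ is doubly stochastic and $D(\bold{S}) = \bold{A}$, establishing $\bold{A} \prec \bold{S}$.

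The only genuinely non-routine step is the trace-matching $\tau(L_i) = \tau(P_i)$, and this is exactly where the simplex hypothesis is used; without affine independence of the $v_i$, barycentric coordinates are not unique and the map $D$ cannot be assembled directly from the $L_i$ in this fashion.
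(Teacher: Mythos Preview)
Your proof is correct and, in the reverse inclusion, slightly more direct than the paper's. The paper first reduces to the case where $\bold{A}$ has finite joint spectrum, writes $\bold{A}=\sum_i \pmb{\beta}_i Q_i$, expresses each $\pmb{\beta}_i$ uniquely as a convex combination of the simplex vertices, and then invokes Proposition~\ref{matrix} to produce the doubly stochastic map; the general case is then handled by an approximation argument. You bypass both the finite-spectrum reduction and the appeal to Proposition~\ref{matrix} by applying continuous functional calculus to the affine barycentric coordinates $\ell_i$ and building the doubly stochastic map directly from the positive operators $L_i=\ell_i(\bold{A})$. The key step is the same in both arguments: affine independence of the vertices forces the equality $\tau(L_i)=\tau(P_i)$ (in the paper's language, $q^tD=p^t$). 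Your forward inclusion is also a bit more explicit than the paper's, which simply cites the Choquet characterization of majorization; your observation that $\ell_i(\bold{A})=D(\ell_i(\bold{S}))\ge 0$ is a clean way to see $\sigma(\bold{A})\subset\operatorname{conv}\sigma(\bold{S})$ in the simplex case.
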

\begin{proof}
It follows from item 3 of the list of equivalent conditions to joint majorization (at the beginning of this section)
that the set to the left in Eq. \eqref{eq two sets} is included in the set to the right. This holds in general, without conditions on the spectrum. As to the converse, first assume that the $n$-tuple of hermitians $\bold{A}$ has finite spectrum along with $\sigma(\bold{A}) \subset  \operatorname{conv}(\sigma(\bold{S}))$ and $\tau(\bold{A}) = \tau(\bold{S})$. Let us write

\[\bold{S} = \sum_{i = 1}^{n+1} \pmb{\alpha}_i P_i ,\quad \bold{A} = \sum_{i = 1}^{m} \pmb{\beta}_i Q_i\ .\]
The $\pmb{\alpha}_i$ form the vertices of a simplex $\mathcal{X}$ and by the condition on the spectrum, the $\pmb{\beta}_i$ belong to $\mathcal{X}$. 
The condition on the spectra implies that each $\pmb{\beta}_i$ can be uniquely written as the convex combination of the $\pmb{\alpha}_i$, 
\begin{eqnarray}\label{cond2}
\pmb{\beta}_i = \sum_{j=1}^{n+1} d_{ij}\,\pmb{\alpha}_j, \quad \sum_{j=1}^{n+1} d_{ij} = 1, \quad 1 \leq i \leq m.
\end{eqnarray}
 Let $D$ be the matrix $D = (d_{ij})_{1 \leq i \leq m, 1 \leq j \leq n+1}$. We see that
 \begin{eqnarray}\label{cond3}
 \sum_{i = 1}^{m} \pmb{\beta}_i \, \tau(Q_i) &=&  \sum_{i = 1}^{m} \sum_{j=1}^{n+1} d_{ij} \tau(Q_i)\,\pmb{\alpha}_j,
 =   \sum_{j=1}^{n+1}  \pmb{\alpha}_j
 \sum_{i = 1}^{m}  d_{ij} \, \tau(Q_i)  \ .\end{eqnarray}
 The trace condition implies that 
\begin{eqnarray}\label{cond1}
\sum_{j= 1}^{n+1} \pmb{\alpha}_j\, \tau(P_j) = \sum_{j = 1}^{m} \pmb{\beta}_j \, \tau(Q_j)\ .
\end{eqnarray}
  Combining Eqs. (\ref{cond2}) and (\ref{cond1}), we see that
 \begin{eqnarray}\label{cond4}
 \sum_{j= 1}^{n+1} \pmb{\alpha}_j \tau(P_j) = \sum_{j=1}^{n+1}  \pmb{\alpha}_j 
 \sum_{i = 1}^{m}  d_{ij} \, \tau(Q_i)\, .  \end{eqnarray}
  Since the convex combinations that realise the $\pmb{\beta}_i$ using the $\pmb{\alpha}_i$ are unique, we see that $D\, q = p$ where $p$ is the column vector $[\tau(P_1),\cdots,\tau(P_{n+1})]^t$, $q$ is the column vector $[\tau(Q_1),\cdots,\tau(Q_m)]^t$. Together with 
 Eq. (\ref{cond2}), we see that condition Eq. (\ref{matr}) in Proposition \ref{matrix} is satisfied and we conclude that $\bold{A} \prec \bold{S}$. 
 
 The general case (when $\bold{A}$ does not have finite joint spectrum) follows by a routine approximation argument which can be found in the proof of Theorem \ref{SHApp}, for instance (see Section \ref{consequences}).
\end{proof}

\medskip

Proposition \ref{prop joint simplex} applies to normal operators $N\in\m$ with spectrum consisting of three non-collinear points, by considering the associated 2 tuple $(\text{Re}(N),\,\text{Im}(N))$. The above result fails when the joint spectrum does not lie on the vertexes of a simplex. We use the following example taken from \cite[Example 2]{WMM} which in turn was inspired by an example of Alfred Horn. Let $\{P_i: 1 \leq i \leq 4\}$ be projections, all of trace $\frac{1}{4}$ in $\mathcal{M}$ and let $N$ and $A$ be the normal operators with spectral projections $P_i$ and spectra,
\[\sigma(N) = \{0,4i,3-2i,-3-2i\}, \quad \sigma(A) = \{2, -2, 2i, -2i\}\,.\] 
Clearly, the operators have the same trace and that $\sigma(A) \subset \operatorname{conv}(\sigma(N))$. Nevertheless, it is shown loc.cit. that $A \not\prec N$. While the points on the spectrum of $N$ above do not have the property that every triple is non-collinear, one can perturb them slightly to get a counterexample with that additional property as well.

\section[Multivariable Schur-Horn theorems ]{Multivariable Schur-Horn theorems in type II$_1$ factors}\label{sec3}

In this section we obtain an exact multivariable a characterization of joint majorization between 
tuples of commuting hermitian operators with finite spectrum (Theorem \ref{SHThm}), which is an extension of the Schur-Horn theorem in type II$_1$ factors. We then use this result to obtain a proof of the approximate multivariable Schur-Horn theorem (Theorem \ref{SHApp}). We also consider additional hypothesis on the inclusion $\a\subset \m$, where $\a$ is a masa in the type II$_1$ factor $\m$, in order to obtain partial extensions of Theorem \ref{SHThm}.

\subsection{Finite spectra: Scalar diagonals}

We  consider first the simplest case :  $\bold{S}$ is a commuting tuple of hermitians with finite joint spectrum and $\bold{A} = \tau(\bold{S})\, I$. Write $S = \sum_{i=1}^{k} \pmb{\alpha}_i P_i$. In this case, the fact that $\tau(\bold{S}) \, I \prec \bold{S} $ can by seen by using Proposition \ref{matrix} and the matrix $D = [\tau(P_1),\cdots,\tau(P_k)]$. Or, for that matter, by the fact that the map $\cdot \mapsto \tau(\cdot)\,I$ is doubly stochastic. We will prove:

\begin{proposition}\label{Scalar}
Let $\mathcal{A}$ be a masa in a type II$_1$ factor $\mathcal{M}$ and let $E$ denote the trace preserving conditional expectation onto $\mathcal A$. Let $k$ be a natural number and let $\bold{S}$ be a tuple of commuting hermitians with joint spectrum consisting of at most $k$ points. Then, there is a unitary so that $E(U\bold{S}U^*) = \tau(\bold{S})\,I$. 
\end{proposition}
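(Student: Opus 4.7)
The plan is to reduce the proposition to a construction problem about partitions of unity in $\mathcal M$ whose components have scalar conditional expectation onto $\mathcal A$.

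First, using the joint spectral theorem for the commuting tuple $\bold S$, I would write $\bold S = \sum_{i=1}^{k} \pmb{\alpha}_i P_i$, where $\{P_i\}_{i=1}^{k}$ is a partition of unity into commuting spectral projections in $\mathcal M$ and $\pmb{\alpha}_i \in \mathbb{R}^n$. Setting $t_i = \tau(P_i)$, we have $\tau(\bold S) = \sum_i t_i \pmb{\alpha}_i$. Since $E$ is linear, $E(U \bold S U^*) = \sum_i \pmb{\alpha}_i\, E(UP_iU^*)$, so it is enough to exhibit a single unitary $U \in \mathcal{U}(\mathcal M)$ for which $E(UP_iU^*) = t_i\, I$ for every $i = 1, \ldots, k$.

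The next step is to recast this goal as the existence of a suitably spread-out partition of unity. Suppose we can find projections $\{R_i\}_{i=1}^k$ in $\mathcal M$ summing to $I$, with $\tau(R_i) = t_i$ and $E(R_i) = t_i I$ for every $i$. Since $\mathcal M$ is a type II$_1$ factor, Murray--von Neumann equivalence $P_i \sim R_i$ yields partial isometries $V_i$ with $V_i^*V_i = P_i$ and $V_iV_i^* = R_i$, and the sum $U = \sum_i V_i$ is a unitary satisfying $U P_i U^* = R_i$ for every $i$. Thus the problem reduces to constructing such a partition $\{R_i\}$, or equivalently, to exhibiting a $k$-dimensional abelian $*$-subalgebra of $\mathcal M$ with prescribed trace vector $(t_1, \ldots, t_k)$ that is orthogonal (in the Popa sense) to $\mathcal A$, meaning $E_{\mathcal A}(b) = \tau(b) I$ for every element $b$ of this subalgebra.

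Finally, I would construct the partition $\{R_i\}$. For $k = 1$ the claim is trivial, and for $k = 2$ it follows from a single application of the one-variable carpenter theorem (Theorem \ref{CTHM}) to the constant $t_1 I$, since $R_2 = I - R_1$ automatically satisfies $E(R_2) = t_2 I$. For $k \geq 3$, a naive iteration inside the corner $(I - R_1) \mathcal M (I - R_1)$ is obstructed by the fact that $(I - R_1)\mathcal A (I - R_1)$ need not be a masa in that corner, so the carpenter theorem cannot be applied there in a direct fashion. Instead I would rely on the existence of finite-dimensional abelian subalgebras of $\mathcal M$ of arbitrary prescribed trace vector that are Popa-orthogonal to $\mathcal A$, a construction made possible by the diffuseness of $\mathcal A$ and the ``room'' available in the type II$_1$ factor $\mathcal M$, and developed in this paper (via an adaptation of ideas of Dykema, Fang, Hadwin and Smith) in Section \ref{sec4}. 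This step is the main obstacle; once the orthogonal partition is secured, the assembly outlined in the previous paragraph yields the required unitary $U$ and the identity $E(U\bold S U^*) = \sum_i \pmb{\alpha}_i t_i I = \tau(\bold S) I$.
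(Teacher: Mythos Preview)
Your reduction in the first two paragraphs is correct and clean: it suffices to find a partition of unity $\{R_i\}_{i=1}^k$ in $\mathcal M$ with $\tau(R_i)=t_i$ and $E(R_i)=t_i\,I$ for every $i$. The problem is the final step, where you invoke Section~\ref{sec4} to produce such a partition. The theorem proved there has an explicit extra hypothesis: it assumes that for every projection $P\in\mathcal M$, the restriction of $E$ to $P\mathcal M P$ has nontrivial kernel. That hypothesis is not part of Proposition~\ref{Scalar}, and it is not known to hold for an arbitrary masa in an arbitrary type~II$_1$ factor; the whole point of Section~\ref{sec4} is to record what one gets \emph{if} it holds. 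Popa's unconditional result only yields finite orthogonal partitions whose minimal projections all have the \emph{same} trace $1/n$, which lets you handle rational trace vectors $(t_1,\dots,t_k)$ but not general ones. So the ``main obstacle'' you identify is not overcome by Section~\ref{sec4}, and the argument as written does not prove the proposition in the stated generality. (There is also a logical ordering issue: the paper places Section~\ref{sec4} after, and in part builds on, the finite-spectrum Schur--Horn theorem derived from Proposition~\ref{Scalar}.)

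The paper's own proof avoids this entirely. Rather than trying to manufacture a Popa-orthogonal partition in one stroke, it runs a double induction on $k$ with an auxiliary Lemma~\ref{Ind}: assuming Proposition~\ref{Scalar} for $k-1$, one mixes two $(k-1)$-resembling blocks by a rotation to force the compressed block to have the correct trace, then flattens that block using the $(k-1)$ case; iterating produces unitaries $U_n$ and projections $Q_n$ with $E(Q_n\,U_n\bold S U_n^*\,Q_n)=\tau(\bold S)\,Q_n$ and $\tau(Q_n)\ge 1-(2/3)^n$, and the SOT limit lands in $\mathcal O(\bold S)=\mathcal U(\bold S)$ because $\bold S$ has finite spectrum. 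No orthogonal-subalgebra or kernel hypothesis is needed.
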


We will prove this proposition using a double induction argument. We need the following definition:

\begin{definition}
Let $\bold{S}$ and $\bold{T}$ be two tuples of commuting hermitian operators with finite joint spectrum in type II$_1$ factors $\mathcal{M}$ and $\mathcal{N}$ respectively. Given a natural number $k$ we say that $\bold{S}$  $k-$\textbf{resembles} $\bold{T}$ if we may write 
\[\bold{S} = \sum_{i=1}^{k}\pmb{\alpha}_i\, Q_i, \quad \bold{T} = \sum_{i=1}^{k} \pmb{\beta}_i \,R_i, \quad \tau_{\mathcal M}(Q_i) = \tau_{\mathcal N}(R_i), \quad 1 \leq i \leq k\ ,\] where $\{Q_1,\cdots,Q_k\}$ (respectively $\{R_1,\cdots,R_k\}$) are mutually orthogonal projections that form a partition of $I_\mathcal M$ (respectively for $I_\mathcal N$).
\end{definition}


The proof of Proposition \ref{Scalar} will require the following lemma. 

 \begin{lemma}\label{Ind}
 Let $\mathcal{A}$ be a masa in a type II$_1$ factor $\mathcal{M}$ and let $E$ denote the trace preserving conditional expectation onto $\mathcal A$.  Let $P$ be a projection in $\mathcal{A}$ and let $\bold{S}$ and $\bold{T}$ be two tuples of commuting  hermitians in $P\mathcal{M}P$ and $(I-P)\mathcal{M}(I-P)$ respectively, so that $\bold{S}$  $(k-1)-$\textbf{resembles} $\bold{T}$. Then, there is a unitary $U$, a projection $Q$ in $\mathcal{A}$ of trace at least $\frac{1}{3}$ and a projection $R$ orthogonal to $Q$ so that letting $\bold{A} = \bold{S} \oplus \bold{T}$ and $\bold{B} = U\,(\bold{S}\oplus \bold{T})\,U^*$, we have,
\begin{enumerate}
\item $E(Q\,\bold{B}\,Q) = \tau(\bold{A})\, Q$.
\item $R$ commutes with $(I-Q)\,\bold{B}\,(I-Q)$ inside $(I-Q)\mathcal{M}(I-Q)$. 
\item  
$R\,\bold{B}\,R$ inside $R\mathcal{M}R$ $(k-1)-$resembles $(I-Q-R)\,\bold{B}\,(I-Q-R)$ inside $(I-Q-R)\mathcal{M}(I-Q-R)$.  
\item $\tau(\bold A)=\tau_{(I-Q)\mathcal{M}(I-Q)}((I-Q)\,\bold B\,(I-Q))$.
\end{enumerate}
\end{lemma}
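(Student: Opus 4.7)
The plan is to prove Lemma \ref{Ind} by constructing the required unitary $U$ in two stages: a strip-wise use of the one-variable carpenter theorem (Theorem \ref{CTHM}), followed by a cross-strip averaging supplied by an inductive application of Proposition \ref{Scalar}.

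To set notation, let $p = \tau(P)$ and, by symmetry, assume $p \leq 1/2$. The $(k-1)$-resemblance hypothesis lets us write $\bold{S} = \sum_{i=1}^{k-1} \pmb{\alpha}_i Q_i$ in $P\mathcal{M}P$ and $\bold{T} = \sum_{i=1}^{k-1} \pmb{\beta}_i R_i$ in $(I-P)\mathcal{M}(I-P)$, with $\tau(Q_i) = p\,\lambda_i$ and $\tau(R_i) = (1-p)\,\lambda_i$ for positive numbers $\lambda_i$ summing to one. Write $\Pi_i := Q_i + R_i$ for the $i$th ``strip'' (a projection of trace $\lambda_i$) and $\pmb{\gamma}_i := p\,\pmb{\alpha}_i + (1-p)\,\pmb{\beta}_i$ for the strip average, so that $\tau(\bold{A}) = \sum_i \lambda_i \pmb{\gamma}_i$.

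In the first stage, apply Theorem \ref{CTHM} inside each corner factor $\Pi_i \mathcal{M} \Pi_i$, equipped with the induced masa, to obtain a unitary $V_i$ satisfying $E(V_i Q_i V_i^*) = p\,\Pi_i$. The assembled unitary $V := \oplus_i V_i$ then satisfies $E(V\bold{A}V^*) = \sum_i \pmb{\gamma}_i \Pi_i$, a commuting tuple in $\mathcal{A}^n$ with at most $k-1$ distinct joint spectral values supported on the partition $\{\Pi_i\}$. In the second stage, the inductive hypothesis (Proposition \ref{Scalar} for $k-1$ spectral values) supplies a unitary $W$ whose conjugation collapses this reduced diagonal tuple to the scalar $\tau(\bold{A})\,I$. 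Taking $U := WV$ and choosing $Q \in \mathcal{A}$ as a suitable spectral projection arising in this process yields the scalar diagonal condition $E(Q\bold{B}Q) = \tau(\bold{A})\,Q$, while the complement $I - Q$ splits as $R + (I - Q - R)$ with the two pieces inheriting a $(k-1)$-resembling structure from the ``unused'' portion of the strips. Condition (4) is then automatic from (1) by taking traces in the corner $(I-Q)\mathcal{M}(I-Q)$.

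The main obstacle will lie in the interaction of the two stages. Since $V\bold{A}V^*$ carries nontrivial off-diagonal parts, the natural identity $E(W X W^*) = W E(X) W^*$ holds only when $W$ normalises $\mathcal{A}$, which the inductively supplied $W$ need not do. One way to overcome this is to refine the first stage so that, on the sub-region that will become $Q$, the operator $V\bold{A}V^*$ already lies in $\mathcal{A}$, choosing $V_i$ to diagonalise $\bold{A}$ on a carefully sized subprojection of $\Pi_i$ and leaving the genuine mixing to the residual part of the strip. The $1/3$ lower bound on $\tau(Q)$ then reflects the worst-case trace loss when simultaneously extracting the scalar block and leaving enough mass in the complement to form two projections $R$ and $I - Q - R$ with matching normalised traces in their respective corner factors. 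Verifying the precise resemblance condition in the residual, together with bookkeeping for the off-diagonal parts left over by the strip-wise stage, constitutes the bulk of the calculation.
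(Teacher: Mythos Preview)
Your plan correctly isolates the central difficulty, but the proposed resolution does not work. After your first stage, $E(V\bold{A}V^*) = \sum_i \pmb{\gamma}_i \Pi_i$ holds, yet $V\bold{A}V^*$ itself still has joint spectrum $\{\pmb{\alpha}_i, \pmb{\beta}_i\}$ on each strip. Your fix---arranging that $V\bold{A}V^*$ already lie in $\mathcal{A}$ on the region that becomes $Q$---is self-defeating: on any subprojection where $V\bold{A}V^*$ is diagonal, its values are drawn from the original $\pmb{\alpha}_i,\pmb{\beta}_i$, not the averaged $\pmb{\gamma}_i$. The tuple you would then hand to Proposition~\ref{Scalar} has up to $2(k-1)$ joint spectral values rather than $k-1$, and the induction collapses. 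There is no way for a compression to be simultaneously in $\mathcal{A}$ and equal to $\pmb{\gamma}_i$ times a projection unless $\pmb{\alpha}_i=\pmb{\beta}_i$. Moreover, your sketch gives no mechanism by which the complement $I-Q$ inherits a $(k-1)$-resembling splitting; a generic carpenter-theorem unitary on each strip leaves no such residual structure.

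The paper's argument avoids the obstacle entirely and never uses the carpenter theorem. It first chooses the integer $m$ with $(m+1)\tau(P)\le 1<(m+2)\tau(P)$, slices $I-P$ into $m$ subprojections each of trace $\tau(P)$ plus a remainder, and then applies an explicit $2\times 2$ rotation with $\cos^2\theta=\tau(P)$ to mix the $P$-block against each slice in turn. The point is that the resulting diagonal block $\bold{C}=\sum_i(\tau(P)\pmb{\alpha}_i+(1-\tau(P))\pmb{\beta}_i)E_i$ is itself a commuting tuple with at most $k-1$ joint spectral values, living in a corner factor, and with normalised trace equal to $\tau(\bold{A})$. After $m$ iterations one has $m$ mutually orthogonal copies of $\bold{C}$; Proposition~\ref{Scalar} for $k-1$ is applied \emph{locally inside each copy} to collapse it to the scalar $\tau(\bold{A})$. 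Because these unitaries act on orthogonal corners, there is no interaction with off-diagonal terms and the commutation problem you flagged never arises. The projection $Q$ is the sum of these $m$ blocks (trace $m\tau(P)\ge 1/3$), while the two untouched leftover blocks $\bold{A}_{m+1}$ and $\bold{T}_1$ are, by their explicit construction, $(k-1)$-resembling tuples in orthogonal corners, giving items (2) and (3) directly.
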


A pictorial description of this result might be useful to the reader. Lemma \ref{Ind} above states that there is a unitary $U$ so that 
\[U \left(\begin{array}{cc}
\bold{S} & 0\\
0 & \bold{T}
  \end{array} \right)U^{*} = \left(\begin{array}{ccc}
\bold{X} & \ast & 0\\
\ast & \bold{Y} & 0\\
0 & 0 & \bold{Z}
  \end{array} \right)\]
where $\bold{Y}$ resembles $\bold{Z}$ and $\bold{X}$ has conditional expectation equal to the trace of $\bold{A}$. 

\medskip

Concerning Proposition \ref{Scalar} there is nothing to prove when $k = 1$; Similarly, Lemma \ref{Ind} trivially holds
in case $k-1=1$. We will prove Lemma \ref{Ind} assuming the truth of Proposition \ref{Scalar} for $1, \cdots, k-1$. Further, we will prove Proposition \ref{Scalar} for a fixed $k$ assuming the truth of Lemma \ref{Ind} for $1, \cdots, k-1$. 
(this double induction argument allows to conclude the truth of both Proposition \ref{Scalar}
and Lemma \ref{Ind}); Briefly, our argument is as follows: consider the notations of Proposition \ref{Scalar}. We may, after conjugating by a unitary, assume that $\bold{S}$ lies in the masa $\mathcal{A}$.  Suppose $\bold{S}$ has joint spectrum consisting of $k$ points. Write $\bold{S} = \sum_{i=1}^{k} \pmb{\alpha}_iP_i$.  
Assuming Lemma \ref{Ind} holds for tuples of commuting hermitians with joint spectrum of at most $k-1$ points, we will construct sequences of unitary operators $U_n$ and projections $Q_n$ in $\mathcal M$ that implement partial solutions, that is, we will have that \[E(U_n \, \bold{S}\, U_n^*) = \tau(\bold{S}) \, Q_n\, .\]
Coupled with additional facts about $Q_n$, we will show that $U_n \bold{S} U_n^*$ converge in the SOT to a tuple of commuting hermitians $\bold{W}$, say, such that 
\[E(\bold{W}) = \tau(\bold{S})\,  I \ \ \text{ and } \ \ \bold S\approx \bold W\, .\]
Therefore $\bold W$ has joint spectrum consisting of $k$ points, so there exists a unitary operator $U\in \mathcal M$ such that $\bold W=U\,\bold S\,U^*$ and then Proposition \ref{Scalar} holds for $k$ (for details see below).

Now, for the construction of the $U_n$ and $Q_n$ we need:

\medskip

\begin{proof}[Proof of Lemma \ref{Ind}](Assuming Proposition \ref{Scalar} for $k-1\geq 2$) 
After conjugating by a unitary, we may assume that $\bold{S}$ and $\bold{T}$ belong to the masas $\mathcal{A}P$ and $\mathcal{A}(I-P)$ of the type II$_1$ factors $P\m P$ and $(I-P)\m(I-P)$ respectively. We may assume that $\tau(P) \leq \frac{1}{2}$; else, we just switch the roles of $\bold{S}$ and $\bold{T}$. Let $m\geq 1$ be the integer such that 
\begin{equation}\label{choice}
(m+1)\, \tau(P) \leq 1 < (m+2)\, \tau(P)\, .
\end{equation}
Given the hypotheses, we may write 
\[\bold{S} = \sum_{i=1}^{k-1} \pmb{\alpha}_i \, E_i\ ,\quad \bold{T} = \sum_{i=1}^{k-1} \pmb{\beta}_i \, F_i\, ,\]
where $\{E_1,\cdots,E_{k-1}\}$ and $\{F_1,\cdots,F_{k-1}\}$ are mutually orthogonal projections
that form partitions of $P$ and $I-P$ respectively and such that 
\begin{equation}\label{rel traces}
\dfrac{\tau(E_i)}{\tau(P)}=\tau_{P\mathcal{M}P}(E_i) = \tau_{(I-P)\mathcal{M}(I-P)}(F_i)=\dfrac{\tau(F_i)}{\tau(I-P)}\, ,\quad 1 \leq i \leq k-1\, .
\end{equation}
For each $1 \leq i \leq k-1$, pick a family $\{F_i^1,\cdots,F_i^{m+1}\}$ of mutually orthogonal sub projections of $F_i$ such that, for $1 \leq i \leq k-1$:
\[\tau(F_i^j) = \dfrac{\tau(P)}{\tau(I-P)} \, \tau(F_i)\, ,\ \,  1 \leq j \leq m,\quad  \tau(F_i^{m+1}) = \dfrac{1-(m+1)\,\tau(P)}{\tau(I-P)}\,\tau(F_i)\ .\] Thus, $\{F_i^1,\cdots,F_i^{m+1}\}$ forms a partition of $F_i$ for each $1 \leq i \leq k-1$. Now define
\[Q_j = F_1^j + \cdots + F_{k-1}^{j},\quad 1 \leq j \leq m+1\,.\]
Note that for $1 \leq j \leq m$,
\[\tau(Q_j) = \sum_{i=1}^{k-1} \tau(F_i^j) = \dfrac{\tau(P)}{\tau(I-P)} \sum_{i=1}^{k-1} \tau(F_i) = \tau(P)\,,\]
and similarly,
\[\tau(Q_{m+1}) = 1 - (m+1)\tau(P)\, .\]
Now, for $1\leq j\leq m$ look at the operator tuple $$\bold{T}Q_j=\sum_{i=1}^{k-1} \pmb{
\beta}_i F_i^j \quad \text{where}\quad \tau(F_i^j)\stackrel{\eqref{rel traces}}{=} \tau(E_i) \ , \quad 1\leq i\leq k-1\ .$$
Pick partial isometries $V_1, \cdots, V_m$ in $\mathcal M$ so that $V_i E_j V_i^* = F_j^i$ for $1\leq i\leq m$ and $1\leq j\leq k-1$; in particular, we get that $V_i P V_i^* = Q_i$. With this in hand, we may write $\bold{S} \oplus \bold{T}$ as
\[\bold{A} = \left(\begin{array}{ccccc}
\sum_{i=1}^{k-1} \pmb{\alpha}_i \,E_i & 0 & 0 & 0 & 0\\
0 & \sum_{i=1}^{k-1} \pmb{\beta}_i\, E_i & 0 & 0 & 0\\
0 & 0 & \ast & 0 & 0\\
0 & 0 & 0 & \sum_{i=1}^{k-1} \pmb{\beta}_i \,E_i & 0\\
0 & 0 & 0 & 0 & \bold{T}_1  \end{array} \right) \]
where $\bold{T}_1$ is the compression of $\bold{T}$ to $Q_{m+1}\mathcal{M}Q_{m+1}$ i.e.
$$\bold{T}_1=\sum_{i=1}^{k-1} \pmb{\beta}_i\, F_i^{m+1}  \quad \text{and} \quad \tau(F_i^{m+1})=\frac{1-(m+1)\,\tau(P)}{\tau(P)}\,\tau(E_i) \ , \quad 1\leq i\leq k-1\ .$$ Hence, $\bold{T}_1$ in $Q_{m+1}\mathcal{M}Q_{m+1}$ $(k-1)$-resembles $\sum_{i=1}^{k-1} \pmb{\beta}_i\, E_i$ in $P \mathcal{M}P$.
Also notice that 
\begin{equation}\label{eq rel traces 2}
\tau(T)=\frac{\tau(I-P)}{\tau(P)}\,\tau(\sum_{i=1}^{k-1}\pmb \beta _i \, E_i)\implies
\tau(\bold A)=\tau(\sum_{i=1}^{k-1} \pmb{\alpha}_i\, E_i )+ \frac{\tau(I-P)}{\tau(P)}\,\tau(\sum_{i=1}^{k-1}\pmb \beta _i \, E_i)\, .
\end{equation}
Choose a $\theta$ so that $\operatorname{cos}^2(\theta) = \tau(P)$. After conjugating by the unitary 
\[W_1 = \left(\begin{array}{ccccc}
\operatorname{cos}(\theta) & \operatorname{sin}(\theta) & 0 & 0 & 0\\
-\operatorname{sin}(\theta) & \operatorname{cos}(\theta) & 0 & 0 & 0\\
0 & 0 & I & 0 & 0\\
0 & 0 & 0 & I & 0\\
0 & 0 & 0 & 0 & I  \end{array} \right)\]
we get that
\[W_1\bold{A}W_1^* = \left(\begin{array}{cccccc}
\bold{C} & \ast & 0 & 0 & 0 & 0\\
\ast & \bold{A}_2 & 0 & 0 & 0 & 0\\
0 & 0 & \sum_{i=1}^{k-1} \pmb{\beta}_i\, E_i & 0 & 0 & 0\\
0 & 0 & 0 & \ast & 0 & 0\\
0 & 0 & 0 & 0 & \sum_{i=1}^{k-1} \pmb{\beta}_i\, E_i  & 0\\
0 & 0 & 0 & 0 & 0 & \bold{T}_1  \end{array} \right) \]
where 
$$\bold C=\sum_{i=1}^{k-1}(\tau(P)\,\pmb\alpha_i + \tau(I-P)\,\pmb\beta_i)\, E_i
\quad \text{and} \quad
\bold A_2= \sum_{i=1}^{k-1}(\tau(I-P)\,\pmb\alpha_i + \tau(P)\,\pmb\beta_i)\, E_i\ . $$
Hence, 
$\tau(\bold C)\stackrel{\eqref{eq rel traces 2}}{=}\tau(P)\, \tau(\bold A)$ 
 and $\bold{A}_2$ $(k-1)-$resembles $\sum_{i=1}^{k-1} \pmb{\beta}_i\, E_i$ as well as $\bold{T}_1$. Similarly, 
\[\bold{C} = \frac{\tau(P)}{\tau(I-P)}\, \bold{A}_2+ \frac{1 - 2\tau(P)}{\tau(I-P)}\ \sum_{i=1}^{k-1} \pmb{\beta}_i\, E_i 
\]
and thus 
continuing as above, $m-1$ more times, we get a unitary operator $W=W_m\,W_{m-1}\cdots W_1\in\mathcal M$ so that 
\begin{equation}\label{eq matrix form}W\bold{A}W^* = \left(\begin{array}{cccccc}
\bold{C} & \ast & \ast & \ast & \ast & 0\\
\ast & \bold{C} & \ast & \ast & \ast & 0\\
\ast & \ast & \ast & \ast & \ast & \ast\\
\ast & \ast & \ast & \bold{C} & \ast & 0\\
\ast & \ast & \ast & \ast & \bold{A}_{m+1} & 0\\
0 & 0 & 0 & 0 & 0 & \bold{T}_1  \end{array} \right) 
\end{equation}
where $\tau(\bold{C})=\tau(P)\,\tau(\bold A)$ and $\bold{A}_{m+1}$ $(k-1)-$resembles $\bold{T}_1$. 
Notice that Eq. \eqref{eq matrix form} implies that 
$$ P(W\bold A W^*)P=\bold C, \qquad 
Q_i(W\bold A W^*)Q_i= \sum_{i=1}^{k-1}(\tau(P)\,\pmb\alpha_i + \tau(I-P)\,\pmb\beta_i)\, F_i^j \ , \ \ 1\leq i\leq m-1\ .
$$ 
In particular, each of these (compressed) tuples are hermitian commuting operators and such that
 \[\tau(P(W\bold A W^*)P)=\tau(Q_i(W\bold A W^*)Q_i)=\tau(P)\,\tau(\bold A), \quad 1\leq i\leq m-1.\]
Now, we can apply Proposition \ref{Scalar} (for $k-1$) to $P(W\bold A W^*)P$ 
in the type II$_1$ factor $P\mathcal MP$ (notice that its joint spectrum has $k-1$ points) 
and conclude that there exists a unitary operator $U_0$ in $P\mathcal MP$ such that 
\begin{equation}\label{eq spreading1} E_{\mathcal AP}(U_0\, P(W \, \bold A \,W^*)P\,U_0^*)=\tau_{P\mathcal MP}(P(W \, \bold A \,W^*)P)\, P= \tau(\bold A)\,P\, .
\end{equation}
Similarly, by applying Proposition \ref{Scalar} (for $k-1$), we conclude that for $1\leq i\leq m-1$ there exists a unitary operator $U_i\in Q_i\mathcal M Q_i$ such that  
\begin{equation}\label{eq spreadingi} E_{\mathcal AQ_i}(U_i Q_i(W \, \bold A \,W^*)Q_i\,U_i^*)=\tau_{Q_i\mathcal MQ_i}(Q_i(W \, \bold A \,W^*)Q_i)\, P= \tau(\bold A)\,Q_i\, .
 \end{equation} 
Let $Q = P + Q_1 + \cdots + Q_{m-1}$ and notice that $\tau(Q)=m\tau(P)\geq \frac{1}{3}$ because of Eq. \eqref{choice}.
Consider the (block diagonal) unitary operator $V=U_0\oplus  \cdots \oplus U_{m-1}\oplus (I-Q)\in \mathcal M$ and the unitary operator $U=VW\in\mathcal M$. Since $V$ is block diagonal then 
\[ P(U\bold A U^*)P=U_0\,P(W\bold AW^*)P\,U_0^*, \qquad  Q_i(U\bold A U^*)Q_i=U_i\,Q_i(W\bold AW^*)Q_i\,U_i^*,\ 1\leq i\leq m.\]
Item 1 of the statement now follows from
Eqs. \eqref{eq spreading1} and \eqref{eq spreadingi} and the previous identities. 

Take $R=Q_m$ and notice that item 2 of the statement follows from 
Eq. \eqref{eq matrix form}.
Item 3 follows by construction of $A_{m+1}$ and $T_1$.
Finally, since $\tau(\bold C)=\tau(P)\,\tau(\bold A)$ then
\[ \tau_{(I-Q)\mathcal{M}(I-Q)}((I-Q)\,\bold B\, (I-Q)) =\dfrac{\tau(\bold A)-m\cdot \tau(\bold C)}{1-m \,\tau(P)}=\tau(\bold A)\, . \] 
\end{proof}

\begin{proof}[Proof Of Proposition \ref{Scalar}](Assuming Lemma \ref{Ind} for $k-1\geq 1$).
After conjugating by a unitary, we may assume that $\bold{S}$ lies in $\mathcal{A}$ and we may write $\bold{S} = \sum_{i=1}^{k} \pmb{\alpha}_i \, P_i$ for mutually orthogonal projections that form a partition of the identity $\{P_1,\cdots,P_k\}$ in $\mathcal{A}$. 
Set $P=P_1+\cdots+P_{k-1}$ so that $I-P=P_k$.
Since $$ \sum_{i=1}^{k-1} \frac{\tau(P_i)\ \tau(I-P)}{\tau(P)}=\tau(P_k)$$
we can pick mutually orthogonal projections $\{F_1,\cdots,F_{k-1}\}$ that form a partition of $P_k$ and such that 
\begin{equation}
\tau(F_i)=\frac{\tau(P_i)\ \tau(I-P) }{\tau(P)} \quad \text{ for } 1\leq i\leq k-1\ .
\end{equation}
Hence, $(\sum_{i=1}^{k-1}\pmb\alpha_i\,P_i)$ and $(\sum_{i=1}^{k-1}\pmb\alpha_k\,F_i)$ are two tuples of commuting hermitians in $P\mathcal M P$ and $(I-P)\mathcal M (I-P)$ respectively, so that they $(k-1)$ resemble each other.
Apply Lemma \ref{Ind} (for $k-1$) to get a unitary $U_1$ and projections $Q_1$ and $R_1$ in $\mathcal M$, so that letting $\bold{S}_1 = U_1 \,\bold{S}\, U_1^*$
\[E(Q_1 \bold{S}_1 Q_1) = \tau(\bold{S}) \, Q_1\ , \quad \tau(Q_1) \geq \frac{1}{3}\]
and also, $\bold{T} = (I-Q_1) \,\bold{S}_1\, (I-Q_1) $ commutes with $R_1$ inside $(I-Q_1) \mathcal{M}(I- Q_1)$; therefore, we can apply  Lemma \ref{Ind} again to $(\bold{T}\,R_1, \bold{T}\,(I-Q_1-R_1))$ (for $(k-1)$) inside $(I-Q_1)\mathcal{M}(I-Q_1)$. In this way, we get sequences of unitary operators $U_n$ and projections $Q_n$ in $\mathcal M$,  so that letting $\bold{S}_n = U_n \, \bold{S}\, U_n^*$, we have $Q_n \, \bold{S}_n \, Q_n = Q_n \, \bold{S}_m \, Q_n$ for $m > n$ and (by item 4 of Lemma \ref{Ind})
\[E(Q_n \bold{S} Q_n) = \tau(\bold{S})\, Q_n\ , \quad \tau(Q_n) \geq 1 -  \left(\frac{2}{3}\right)^n\ .\]
These facts show that the sequence of $\bold{S}_n$'s converge in SOT to a tuple $\bold W$ of commuting hermitian operators in $\mathcal M$. Since $\mathcal O(\bold S)$ is SOT closed we see that $\bold W\in\mathcal O(\bold S)$; moreover, since 
$\bold S$ has finite joint spectrum we get that $\mathcal O(\bold S)=\{V\,\bold S\, V^*: V\in\mathcal U(\mathcal M) \}$ and there exists a unitary $U\in\mathcal M$ such that $\bold W=U\,\bold S\,U^*$. Finally, by continuity of $E$ we see that $U$ has the desired properties.
\end{proof}

\subsection{Finite spectra: exact multivariable Schur-Horn theorem}

Let $\mathcal{M}$ be a type II$_1$ factor, let $\mathcal{A}$ be a masa in $\mathcal{M}$ and let $E$ denote the trace preserving conditional expectation onto $\mathcal A$. 
Suppose $\bold{S}$ is a commuting tuple, $U\in\mathcal M$ is a unitary operator and $E(U\,\bold{S}\,U^*) = \bold{A}$. Then, since the map $(\cdot)\mapsto E(U\,(\cdot )\,U^*)$ is doubly stochastic, we have that $\bold{A} \prec \bold{S}$.
 Our goal in this subsection is to prove the converse of this fact when $\bold A$ and $\bold S$ have finite spectrum.
\begin{theorem}\label{SHThm}
Let $\mathcal{M}$ be a type II$_1$ factor, let $\mathcal{A}$ be a masa in $\mathcal{M}$ and let $E$ denote the trace preserving conditional expectation onto $\mathcal A$. Let $\bold{A}$ and $\bold{S}$ be $n$-tuples of commuting hermitian operators with finite spectrum with $\bold{A} \in \mathcal{A}^n$ and $\bold{S} \in \mathcal{M}^n$ such that $\bold{A} \prec \bold{S}$. Then, there is a unitary $U$ in $\mathcal{M}$ so that 
\[E(U\bold{S}U^{*}) = \bold{A}\,.\]
\end{theorem}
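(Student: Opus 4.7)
The plan is to reduce the theorem to the scalar-diagonal case handled in Proposition \ref{Scalar}, by using Proposition \ref{matrix} to cut $\bold S$ into pieces that, after a unitary conjugation, lie in the corners $Q_i\mathcal M Q_i$, where $\{Q_i\}$ is the spectral partition of $\bold A$. Inside each corner the desired piece of $\bold A$ is the constant tuple $\pmb\beta_i Q_i$, so Proposition \ref{Scalar} applied in that type II$_1$ factor finishes the job.

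Concretely, I would fix the spectral decompositions $\bold A = \sum_{i=1}^m \pmb\beta_i\, Q_i$ with $\{Q_i\}$ a partition of unity in $\mathcal A$, and $\bold S = \sum_{j=1}^k \pmb\alpha_j\, P_j$ with $\{P_j\}$ a partition of unity in $\mathcal M$ in the commutant of $\bold S$. Since $\bold A \prec \bold S$, Proposition \ref{matrix} supplies a nonnegative $m \times k$ matrix $D=(d_{ij})$ satisfying
\[\sum_{j=1}^k d_{ij} = 1, \qquad \sum_{i=1}^m \tau(Q_i)\, d_{ij} = \tau(P_j), \qquad \sum_{j=1}^k d_{ij}\, \pmb\alpha_j = \pmb\beta_i.\]
For each $i$, I would then choose mutually orthogonal projections $P_{i1},\ldots,P_{ik}$ inside $Q_i\mathcal M Q_i$ that partition $Q_i$ and satisfy $\tau(P_{ij}) = \tau(Q_i)\, d_{ij}$ (possible since these traces sum to $\tau(Q_i)$ and $Q_i\mathcal M Q_i$ is type II$_1$), and form the commuting $n$-tuple
\[\bold S' = \sum_{i=1}^m \sum_{j=1}^k \pmb\alpha_j\, P_{ij}.\]
The spectral projection of $\bold S'$ at $\pmb\alpha_j$ is $\sum_i P_{ij}$, of trace $\sum_i \tau(Q_i) d_{ij} = \tau(P_j)$, so $\bold S$ and $\bold S'$ have the same joint distribution; being finite-spectrum commuting tuples they are unitarily equivalent, i.e.\ there is a unitary $W \in \mathcal M$ with $W \bold S W^* = \bold S'$.

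Next, within each corner $Q_i\mathcal M Q_i$, which is a type II$_1$ factor with masa $\mathcal A Q_i$, the commuting tuple $\bold S^{(i)} := \sum_j \pmb\alpha_j\, P_{ij}$ has finite joint spectrum and normalized trace $\sum_j d_{ij}\, \pmb\alpha_j = \pmb\beta_i$. Applying Proposition \ref{Scalar} inside $Q_i\mathcal M Q_i$ yields a unitary $V_i \in Q_i\mathcal M Q_i$ with $E_{\mathcal A Q_i}(V_i\, \bold S^{(i)}\, V_i^*) = \pmb\beta_i Q_i$. Set $V = \sum_i V_i$, a unitary in $\mathcal M$ block-diagonal with respect to $\{Q_i\}$. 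Since the $Q_i$ lie in $\mathcal A$, one has $E(T) = \sum_i E(Q_i T Q_i)$ for every $T \in \mathcal M$, so
\[E(V \bold S' V^*) = \sum_{i=1}^m E_{\mathcal A Q_i}(V_i\, \bold S^{(i)}\, V_i^*) = \sum_{i=1}^m \pmb\beta_i Q_i = \bold A.\]
The unitary $U := V W$ then satisfies $E(U \bold S U^*) = \bold A$, as required.

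The only genuine analytic content sits in Proposition \ref{Scalar}; the present argument is purely structural bookkeeping. The matrix $D$ from Proposition \ref{matrix} is precisely what is needed to simultaneously (i) preserve the global joint distribution of $\bold S$ under the slicing into the pieces $P_{ij}$, and (ii) force the normalized trace on each corner $Q_i\mathcal M Q_i$ to equal the target value $\pmb\beta_i$. No further induction, diagonalisation, or approximation is required in this finite-spectrum setting, so I expect no obstruction beyond what Proposition \ref{Scalar} has already dealt with.
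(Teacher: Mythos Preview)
Your proof is correct and follows essentially the same route as the paper's: both reduce to Proposition~\ref{Scalar} by cutting the identity into pieces on which the restriction of (a unitary conjugate of) $\bold S$ has the prescribed average $\pmb\beta_i$, and then applying Proposition~\ref{Scalar} in each corner. The only cosmetic difference is that the paper invokes Choquet's comparison-of-measures characterization to produce projections $R_i\in\mathcal A$ commuting with $\bold S$ (after first moving $\bold S$ into $\mathcal A$), whereas you use the equivalent finite-spectrum criterion of Proposition~\ref{matrix} to build the pieces $P_{ij}$ directly inside the $Q_i$-corners; your version has the minor advantage that the corners are already the $Q_i$, so no final relabelling unitary is needed.
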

\begin{proof}
We may assume, after conjugating by a unitary if needed, that $\bold{S}\in \mathcal{A}^n$. Let $\bold{A} = \sum_{i = 1}^m \pmb{\beta}_i Q_i$, where $\{Q_1,\cdots,Q_m\}$ are mutually orthogonal projections in $\mathcal{A}$ that form a partition of the identity. Using the equivalence between joint majorization and Choquet's notion of comparison of measures we see that there are orthogonal projections $R_1,\cdots, R_m$ in $\mathcal{A}$ so that 
\[\tau(R_i) = \tau(Q_i)\ , \quad  \tau(\bold{S}R_i) = \pmb{\beta}_i\,\tau(R_i)\ , \quad 1 \leq i \leq m\,.\]By Proposition \ref{Scalar} we may find a unitary $U_i$ in each of the type II$_1$ factors $R_i \mathcal{M} R_i$
 so that 
\[E_{\mathcal{A}R_i}(U_i \bold{S}R_i U_i^{*}) = \pmb{\beta}_i I_{R_i \mathcal{M} R_i} \ , \quad 1 \leq i \leq m\, .\]
Letting $U$ be the unitary $U_1 \oplus \cdots \oplus U_m\in \mathcal M$, we see that 
\[E_{\mathcal{A}}(U\bold{S}U^{*}) = \sum_{i=1}^m \pmb{\beta}_i Q_i = \bold{A}\, .\]
 \end{proof}

A simple consequence of the above theorem is the following result, which deserves to be called the multivariable Carpenter theorem (for commuting tuples of positive contractions with finite spectrum). As pointed out in the introduction (see Remark \ref{rem: projections are maximal}), the $n$-tuples of commuting projections are $\prec$-maximal elements, modulo approximate unitary equivalence, within the set of $n$-tuples of commuting positive contractions in a type II$_1$ factor. 

\begin{theorem}[The multivariable carpenter theorem: finite spectrum case]\label{MVCT}
Let $\mathcal{M}$ be a type II$_1$ factor, let $\mathcal{A}$ be a masa in $\mathcal{M}$ and let $E$ denote the trace preserving conditional expectation onto $\mathcal A$. For every $n$-tuple 
$\bold{A}\in \mathcal{A}^n$ of positive contraction with finite spectrum there is an $n$-tuple of commuting projections $\bold{P}$ such that 
\[E(\bold{P}) = \bold{A}\, .\]
\end{theorem}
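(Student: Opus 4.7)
The plan is to combine Proposition \ref{Pro} with Theorem \ref{SHThm}. Proposition \ref{Pro} gives, for any tuple of commuting positive contractions, a majorizing tuple of commuting projections; Theorem \ref{SHThm} then converts a majorization between finite-spectrum tuples into an equality of conditional expectations along a unitary orbit. The key point that makes everything fit together cleanly is that an $n$-tuple of commuting projections automatically has finite joint spectrum (contained in $\{0,1\}^n$), so the finite-spectrum hypothesis of Theorem \ref{SHThm} is satisfied on both sides.

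More concretely, first I would apply Proposition \ref{Pro} to the given tuple $\bold{A} \in \mathcal{A}^n$ of positive contractions to obtain an $n$-tuple $\bold{P}_0 \in \mathcal{M}^n$ of commuting projections with $\bold{A} \prec \bold{P}_0$. Since $\bold{A}$ has finite spectrum by hypothesis and $\bold{P}_0$ is a commuting projection tuple, both $\bold{A}$ and $\bold{P}_0$ have finite joint spectrum. Hence the hypotheses of Theorem \ref{SHThm} are met, and there exists a unitary $U \in \mathcal{M}$ such that
\[
E(U\, \bold{P}_0\, U^{*}) = \bold{A}\ .
\]

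Finally, set $\bold{P} := U\, \bold{P}_0\, U^{*}$. Since unitary conjugation preserves both commutation and the projection property (it is a $*$-automorphism of $\mathcal{M}$), the tuple $\bold{P}$ is again an $n$-tuple of commuting projections in $\mathcal{M}$, and by construction $E(\bold{P}) = \bold{A}$, as required.

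There is no real obstacle here: Proposition \ref{Pro} and Theorem \ref{SHThm} have already done the serious work, and the only small check is that the tuple produced by Proposition \ref{Pro} retains a finite joint spectrum, which is automatic for commuting projections. In particular, no additional assumption on $\bold{A}$ beyond the finite-spectrum hypothesis is needed, and the argument gives an actual (not merely approximate) diagonalization, in contrast to Theorem \ref{MVCApp}.
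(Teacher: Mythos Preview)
Your proposal is correct and follows essentially the same route as the paper: apply Proposition \ref{Pro} to obtain a commuting projection tuple $\bold{Q}$ with $\bold{A}\prec\bold{Q}$, observe that $\bold{Q}$ has finite joint spectrum, invoke Theorem \ref{SHThm} to get a unitary $U$ with $E(U\bold{Q}U^*)=\bold{A}$, and set $\bold{P}=U\bold{Q}U^*$. The paper's proof is verbatim this argument.
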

\begin{proof} By Proposition \ref{Pro} there exists an $n$-tuple of commuting projections $\bold{Q}$ such that $\bold A\prec \bold Q$. Since $\bold Q$ also has a finite spectrum we can apply Theorem \ref{SHThm} and get a unitary operator $U\in\mathcal M$ such that $E(U\,\bold Q\,U^*)=\bold A$. Then $\bold P=U\,\bold Q\,U^*$ has the desired properties.
\end{proof}

\medskip

Next we describe the set of all possible diagonals of $n$-tuples of mutually orthogonal projections.

\begin{proposition}
Let $\mathcal{M}$ be a type II$_1$ factor, let $\mathcal{A}$ be a masa in $\mathcal{M}$ and let $E$ denote the trace preserving conditional expectation onto $\mathcal A$. Suppose $\bold{A}=(A_1,\cdots,A_n)\in\mathcal A^n$ is a $n$-tuple of positive operators with finite spectrum. Then there is a tuple of mutually orthogonal projections  $\bold{P}$ such that $E(\bold{P}) = \bold{A}$ iff $\sum_{i=1}^{n} A_i \leq I$. 
\end{proposition}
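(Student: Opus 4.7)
The forward implication is immediate: any tuple $\bold P$ of mutually orthogonal projections satisfies $\sum_i P_i \leq I$, and applying the positive, linear map $E$ yields $\sum_i A_i = \sum_i E(P_i) = E(\sum_i P_i) \leq E(I) = I$.

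For the converse, the plan is to reduce to Theorem \ref{SHThm} using a model tuple of mutually orthogonal projections as the majorizer. The key observation is that mutual orthogonality is preserved under unitary conjugation, so any tuple in $\mathcal O(\tilde{\bold S})$ with $\tilde{\bold S}$ consisting of mutually orthogonal projections will automatically share that property. Concretely, I would set $A_{n+1} := I - \sum_{i=1}^n A_i \geq 0$ and form the $(n+1)$-tuple $\tilde{\bold A} := (A_1,\ldots,A_{n+1})$ of commuting positive operators in $\mathcal A$ with finite spectrum summing to $I$; then choose mutually orthogonal projections $Q_1,\ldots,Q_{n+1}$ in $\mathcal M$ summing to $I$ with $\tau(Q_i) = \tau(A_i)$, and set $\tilde{\bold S} := (Q_1,\ldots,Q_{n+1})$.

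The main technical step is to verify $\tilde{\bold A} \prec \tilde{\bold S}$, which I would do via Proposition \ref{matrix}. The joint spectrum of $\tilde{\bold S}$ sits inside the set of standard basis vectors $\{e_1,\ldots,e_{n+1}\}$ of $\mathbb R^{n+1}$, while each joint-spectral point $\pmb\beta_j$ of $\tilde{\bold A}$ has non-negative coordinates summing to $1$ and so admits the canonical convex decomposition $\pmb\beta_j = \sum_i (\pmb\beta_j)_i\, e_i$. Taking $D := ((\pmb\beta_j)_i)_{j,i}$, the conditions $D\mathbf 1 = \mathbf 1$ and $D\pmb\alpha = \pmb\beta$ hold tautologically, while the trace identity $q^t D = p^t$ follows from $\sum_j \tau(R_j)(\pmb\beta_j)_i = \tau(A_i) = \tau(Q_i)$, where the $R_j$ denote the joint spectral projections of $\tilde{\bold A}$. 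Applying Theorem \ref{SHThm} then produces a unitary $U \in \mathcal M$ with $E(U \tilde{\bold S} U^*) = \tilde{\bold A}$; setting $P_i := U Q_i U^*$ for $1 \leq i \leq n$ yields the desired mutually orthogonal projections with $E(\bold P) = \bold A$. I anticipate the majorization check to be the only real step of substance; the passage from the $(n+1)$-tuple back to the original $n$-tuple is free, since unitary conjugation respects mutual orthogonality coordinatewise.
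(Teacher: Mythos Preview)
Your proposal is correct and follows essentially the same route as the paper's proof: both construct the matrix $D$ whose $(j,i)$-entry is the $i$-th coordinate of the $j$-th joint spectral value of $\bold A$, verify the three conditions of Proposition~\ref{matrix}, and then invoke Theorem~\ref{SHThm}. The only cosmetic difference is that you augment to $(n+1)$-tuples $\tilde{\bold A}$ and $\tilde{\bold S}$ (so that the joint spectrum of the majorizer is $\{e_1,\ldots,e_{n+1}\}\subset\mathbb R^{n+1}$), whereas the paper stays with $n$-tuples and takes $\bold R=(R_1,\ldots,R_n)$ as majorizer, whose joint spectral values are $\{e_1,\ldots,e_n,0_n\}\subset\mathbb R^n$; the resulting matrix $D$ and the verification are identical.
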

\begin{proof}
If we assume that $\bold{P}=(P_1,\cdots, P_n)$ is an $n$-tuple of mutually orthogonal projections such that $E(\bold{P}) = \bold{A}$ then $\sum_{i=1}^n A_i=E(\sum_{i=1}^n P_i)$, which is a positive contraction since $\sum_{i=1}^n P_i$ is an orthogonal projection. 

In the converse direction, note that $\sum_{i=1}^n \tau(A_i) \leq 1$. Choose mutually orthogonal projections $R_1, \cdots, R_n, R_{n+1}=I-(R_1+\cdots+R_n)$ such that $\tau(R_i) = \tau(A_i)$ for $1 \leq i \leq n$. 
Let $\bold{A} = \sum_{i=1}^m \pmb{\beta}_i \,Q_i$, where $\{Q_1,\dots,Q_m\}$ are mutually orthogonal projections that form a partition of the 
identity. Notice that 
$$
 A_j=\sum_{i=1}^m \pmb\beta_i(j)\, Q_i\ , \ \ 1\leq j\leq n \ \ \Rightarrow \ \  \sum_{j=1}^n A_j= \sum_{i=1}^m\left(\sum_{j=1}^n \pmb\beta_i(j)\right)\ Q_i\leq I\, .
$$
Let $D$ be the  $m \times (n+1)$ matrix defined by 
\[D_{ij} = \pmb{\beta}_i(j), \, 1\leq i \leq m,\, 1 \leq j \leq n,\quad D_{i,n+1} = 1 - \sum_{j=1}^{n} \pmb{\beta}_i(j),\, 1\leq i \leq m\ .\] Then $D$ has non negative coefficients, $$D\,1_{n+1}=1_m\ ,\quad q^t D=p^t \quad\text{ and } \quad D\pmb \alpha=\pmb\beta\,, $$ where 
$p$ is the column vector $(\tau(R_i))_{i=1}^{n+1}\in\R^{n+1}$, $q\in\R^m$ is the column vector $\tau(\bold Q)^t$,
 $\pmb{\alpha}$ is the $n+1 \times n$ matrix  $[e_1 \mid \cdots \mid e_n\mid 0_n]^t$ (here $\{e_i\}_{i=1}^n$ denotes the canonical basis of $\mathbb C^n$ and $0_n\in\mathbb C^n$) and $\pmb{\beta}$ is the $m \times n$ matrix  $[\pmb{\beta}_1 \mid \cdots \mid \pmb{\beta}_m]^t$. 
Hence, if we let $\bold R=(R_1,\cdots,R_n)$ then, by Proposition \ref{matrix}, we have that $\bold A\prec \bold R$.
Now, by Theorem \ref{SHThm}, we have that there is a unitary $U$ such that $E(U\,\bold{R}\,U^*) = A$. The components of $\bold P=U\,\bold{R}\,U^*$ are clearly mutually orthogonal projections.
\end{proof}

\medskip

Here is another fact close in spirit to Theorem \ref{MVCT} above.

\begin{theorem} Let $\mathcal A$ be a masa inside the type II$_1$ factor $\mathcal M$ and let $E$ denote the trace preserving conditional expectation onto $\mathcal A$. If $A\in \mathcal{A}$ is a normal contraction with finite spectrum  then, there is a unitary $U$ in $\mathcal{M}$ such that \[E(U) = A\ . \]
\end{theorem}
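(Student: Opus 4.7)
The plan is to reduce the statement to the finite-spectrum multivariable Schur-Horn theorem (Theorem \ref{SHThm}) applied to the pair of real and imaginary parts.  Writing $A = \sum_{i=1}^k \alpha_i Q_i$ with $\{Q_i\}$ a partition of $I$ in $\mathcal{A}$ and $|\alpha_i|\le 1$, we identify the normal contraction $A$ with the $2$-tuple of commuting hermitians $\bold{A}=(\mathrm{Re}(A),\mathrm{Im}(A)) \in \mathcal{A}^2$, which has finite joint spectrum $\{(\mathrm{Re}(\alpha_i),\mathrm{Im}(\alpha_i))\}_{i=1}^k \subset \mathbb{D}$.

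Next, I would invoke Remark \ref{Unit} to produce a unitary $V \in \mathcal{M}$ with finite spectrum such that $A \prec V$, i.e. such that the commuting $2$-tuple $\bold{V}=(\mathrm{Re}(V),\mathrm{Im}(V))$ jointly majorizes $\bold{A}$. Concretely, for each $i$ write $\alpha_i = \tfrac{1}{2}(\beta_i+\gamma_i)$ with $|\beta_i|=|\gamma_i|=1$ (possible since $|\alpha_i|\le 1$), split each $Q_i$ into two equal halves $Q_i^1,Q_i^2$ inside some masa, and set $V=\sum_i(\beta_iQ_i^1+\gamma_i Q_i^2)$; the conditional expectation onto the abelian subalgebra generated by $\{Q_i\}$ is doubly stochastic and sends $V$ to $A$, so $\bold{A}\prec\bold{V}$ in the sense of Definition \ref{DefMaj}.

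Now apply Theorem \ref{SHThm} to the $2$-tuples $\bold{A}\in\mathcal{A}^2$ and $\bold{V}\in\mathcal{M}^2$, both of which have finite spectrum and satisfy $\bold{A}\prec\bold{V}$. This yields a unitary $W\in\mathcal{M}$ with $E(W\,\bold{V}\,W^*)=\bold{A}$, i.e.\ $E(W\,\mathrm{Re}(V)\,W^*)=\mathrm{Re}(A)$ and $E(W\,\mathrm{Im}(V)\,W^*)=\mathrm{Im}(A)$. By $\mathbb{C}$-linearity of $E$, setting $U:=WVW^*$ gives a unitary in $\mathcal{M}$ with $E(U)=A$. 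The only potentially delicate point is the existence of the majorizing unitary $V$ with finite spectrum, but this is exactly the content of Remark \ref{Unit} and is handled by the averaging-on-the-unit-circle construction above; once this is in place the theorem follows immediately from Theorem \ref{SHThm}.
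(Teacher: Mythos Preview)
Your proposal is correct and follows essentially the same approach as the paper: the paper omits the proof but indicates it proceeds by invoking Remark \ref{Unit} to produce a finite-spectrum unitary majorizing $A$ and then applying Theorem \ref{SHThm}, exactly as you do (mirroring the proof of Theorem \ref{MVCT}). Your explicit construction of $V$ via splitting each $Q_i$ in half and writing $\alpha_i=\tfrac12(\beta_i+\gamma_i)$ with $|\beta_i|=|\gamma_i|=1$ is simply a concrete realisation of Remark \ref{Unit}.
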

The unitary constructed will in fact have finite spectrum. The proof uses an argument similar to that needed for Remark \ref{Unit}, following the lines of the proof of Theorem \ref{MVCT} and we omit it. 

\subsection{Approximate multivariable Schur-Horn theorem}\label{consequences}

Analogous to the proof of \cite[Theorem 4.1.]{BhaRav}, we have:

\medskip

\begin{proof}[Proof of Theorem \ref{SHApp}]
First notice that the inclusion of the set to the left in Eq. \eqref{eqSHApp} in the set to the right is a consequence of the following two facts: on the one hand $E(U\,\bold S\,U^*)\prec \bold S$ for every unitary $U\in\mathcal M$; on the other hand the set $\{\bold A\in\mathcal A^n:\ \bold A\prec \bold S\}$ is $\|\cdot\|$-closed. 

In order to show the other inclusion, we consider the following reduction: since $\mathcal A$ is masa in $\mathcal M$ then there exists $\bold S'\in\mathcal A^n$ such that $\bold S\approx \bold S'$ so that,  $\mathcal O(\bold S)=\mathcal O(\bold S')$. Therefore $$ \overline{E(\mathcal{U}(\bold{S}))}^{||} =\overline{E(\mathcal{U}(\bold{S'}))}^{||}\,. $$
Hence, we assume (without loss of generality) that $\bold S\in\mathcal A^n$.

Let $\Delta$ be a doubly stochastic map on $\mathcal{M}$ with range in $\mathcal{A}$ so that $\Delta(\bold{S}) = \bold{A}$. Fix $\epsilon > 0$ and choose a tuple of commuting hermitians $\bold{T} = \sum_{i=1}^k \pmb{\alpha}_i P_i \in\mathcal A^n$ with finite spectrum so that 
 \[||\, \bold{S} - \bold{T}\, || < \epsilon\, .\]
 Let $\bold{B} = \Delta(\bold{T})$ and pick another tuple of hermitians in $\mathcal{A}$ with finite spectrum, $\bold{C} = \sum_{i=1}^m \pmb{\beta}_i Q_i$, such that $||\bold{B} - \bold{C}|| < \epsilon$. It is then easy to see that 
 \[||\, \bold{B} - \sum_{i=1}^m \dfrac{\tau(\bold{B}\, Q_i)}{\tau(Q_i)}\, Q_i \, || < \epsilon\,.\]
 Let $\bold{D} = \sum_{i=1}^m \dfrac{\tau(\bold{B}\, Q_i)}{\tau(Q_i)} \,Q_i$. Thus, we have that $||\,\bold{D} - \Delta(\bold{T})\,|| < \epsilon$. The operator $\tilde{\Delta}$ defined on $\mathcal{M}$ by 
 \[\tilde{\Delta}(A) :=  \sum_{i=1}^{m} \dfrac{\tau(\Delta(A) \,Q_i)}{\tau(Q_i)} \, Q_i\]
 is doubly stochastic and 
 \[\tilde{\Delta}(\bold{T})  = \sum_{i=1}^{m} \dfrac{\tau(\bold{B}\, Q_i)}{\tau(Q_i)} \, Q_i = \bold{D}\,.\]
   Thus, by Theorem \ref{SHThm} there is a unitary $U\in\mathcal M$ so that $E(U\bold{T}U^*) = \bold{D}$. We conclude that 
  \begin{eqnarray*}
 ||\,E(U\,\bold{S}\,U^*) - \bold{A}\,|| &=&  ||\,E(U\,\bold{S}\,U^*) - \Delta(\bold{S})\,||\\
 &\leq& ||\,E[U(\bold{S}-\bold{T})U^*]\,|| + ||\,E(U\,\bold{T}\,U^*) - \bold{D}\,|| + ||\,\bold{D} - \Delta(\bold{T})\,|| + ||\,\Delta(\bold{T}-\bold{S})\,|| \\
 &<& \epsilon + 0 + \epsilon + \epsilon\, .
 \end{eqnarray*}
 Since $\epsilon$ was arbitrary, the theorem follows. 
 \end{proof}

\medskip

We end this section with the proof of the approximate carpenter theorem.

\medskip

\begin{proof}[Proof of Theorem \ref{MVCApp}]
If $\bold A$ is an $n$-tuple of positive contractions in $\mathcal A$ then, by Proposition \ref{Pro}, there exists an $n$-tuple of commuting projections in $\mathcal M$, say $\bold Q$, such that $\bold A\prec\bold Q$. 
Hence, by Theorem \ref{SHApp}, given $\varepsilon >0$ there exists a unitary operator $U\in\mathcal M$ such that 
$\| E(U\,\bold Q\, U^*) - \bold A\|\leq \varepsilon$. Hence $\bold P= U\,\bold Q\, U^*$ is a commuting $n$-tuple of projections that has the desired properties.
\end{proof}

\subsection{Diagonals with finite spectra}\label{sec4}

In a recent paper \cite{DFHS}, Dykema, Hadwin, Fang and Smith presented a different approach to the Schur-Horn and carpenter theorems, relating the problems to one involving the kernels of conditional expectations onto masas, when restricted to corners of type II$_1$ factors. 
This technique allows to improve a result of Popa \cite{Popaortalg}
 on orthogonal subalgebras of type II$_1$ factors.
Recall that 
two subalgebras $\mathcal{A}$ and $\mathcal{B}$ of a type II$_1$ factor $\mathcal M$ are said to be {orthogonal} if for every $A \in \mathcal{A}$ and $B \in \mathcal{B}$, we have $\tau(AB) = \tau(A)\,\tau(B)$. This is equivalent to several other statements, most notably that the conditional expectations $E_{\mathcal{A}}$ and $E_{\mathcal{B}}$ commute as operators in $\mathcal{B}(L^2(\mathcal{M},\tau))$. Popa showed that any abelian algebra admits a $n$ dimensional orthogonal abelian algebra with all minimal projections of same trace, for any $n$. 

\begin{theorem}
Let $\mathcal{A}$ be a masa inside a type II$_1$ factor $\mathcal{M}$ and let $E$ denote the trace preserving conditional expectation onto $\mathcal A$. Suppose that for every projection $P$ in $\mathcal{M}$, $E$ restricted to $P\mathcal{M}P$ has non-trivial kernel.  Then, there is a diffuse abelian von Neumann algebra $\mathcal{B}$ in $\mathcal{M}$ that is orthogonal to $\mathcal{A}$.
\end{theorem}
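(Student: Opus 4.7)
The plan is to build $\mathcal{B}$ as the weak closure of an increasing chain of finite-dimensional dyadic abelian subalgebras $\mathcal{B}_0 \subset \mathcal{B}_1 \subset \cdots$ of $\mathcal{M}$, where $\mathcal{B}_n$ will be the linear span of $2^n$ mutually orthogonal projections summing to $I$, each of trace $2^{-n}$, and, crucially, each satisfying $E(R)=\tau(R)\,I$. Since this latter identity is preserved under linear combinations and ultraweak limits (using normality of $E$), one gets $E(B)=\tau(B)\,I$ for every $B$ in the resulting abelian subalgebra $\mathcal{B}$, which is exactly the desired orthogonality to $\mathcal{A}$; and since $\mathcal{B}$ will contain projections of arbitrarily small positive trace, it will be diffuse.

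The heart of the construction is a splitting lemma: \emph{for every projection $P\in\mathcal{M}$ there exists a subprojection $Q\leq P$ with $E(Q)=\tfrac12 E(P)$} (and consequently $\tau(Q)=\tfrac12\tau(P)$). Granting this, starting from $\mathcal{B}_0=\mathbb{C}I$ and inductively splitting each minimal projection of $\mathcal{B}_n$ into two halves via the lemma yields the sequence $\{\mathcal{B}_n\}$; note that if the input projection $P$ already satisfies $E(P)=\tau(P)\,I$, then both $Q$ and $P-Q$ inherit this orthogonality relation.

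To prove the splitting lemma I will use a Krein-Milman argument that exploits the hypothesis on kernels. Consider the set
\[
\mathcal{K}_P \,:=\, \{X\in\mathcal{M}\,:\, 0\leq X\leq P,\ E(X)=\tfrac12 E(P)\}.
\]
This set is non-empty (it contains $\tfrac12 P$), convex, and ultraweakly compact: it is a weak-$*$-closed subset of the order interval $[0,P]$, with the defining affine constraint preserved by the normality of $E$. By Krein-Milman, $\mathcal{K}_P$ has an extreme point $X$. The main obstacle, and the step where the hypothesis is invoked, will be to show that $X$ must be a projection. Suppose to the contrary that it is not: then some spectral projection $R$ of $X$ corresponding to a closed interval $[a,b]\subset(0,1)$ is non-zero, and $R\leq P$ since the support of $X$ is dominated by $P$. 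By hypothesis, the restriction $E|_{R\mathcal{M}R}$ has a non-trivial kernel, from which (by passing to real and imaginary parts) one extracts a non-zero self-adjoint $Y\in R\mathcal{M}R$ with $E(Y)=0$. Rescaling so that $\|Y\|<\min(a,1-b)$ forces both $X\pm Y$ to lie in $\mathcal{K}_P$, yet $X=\tfrac12\bigl((X+Y)+(X-Y)\bigr)$ contradicts extremality. Hence $X$ is a projection and serves as the desired $Q$, and the remaining verifications (abelianness, diffuseness, orthogonality) are routine.
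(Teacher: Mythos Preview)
Your proof is correct and follows essentially the same route as the paper's: build a dyadic tower of projections $P$ with $E(P)=\tau(P)I$ via a Krein--Milman argument on a convex constraint set, then take the generated abelian algebra. The paper formulates the key step as an interpolation (given $P\leq Q$ with $E(P)=\tau(P)I$, $E(Q)=\tau(Q)I$, and any $\lambda\in(\tau(P),\tau(Q))$, find $P\leq R\leq Q$ with $E(R)=\lambda I$) and establishes non-emptiness of the constraint set by a weak$^*$ separation argument, deferring the ``extreme points are projections'' step to \cite{DFHS}; your halving lemma is the special case actually needed, makes non-emptiness trivial (via $\tfrac12 P$), and spells out the extreme-point argument directly from the kernel hypothesis, so your write-up is a bit more self-contained.
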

\begin{proof}
We first claim that for every pair of projections $P < Q$ in $\mathcal{M}$ with $E(P) = \tau(P)I$ and $E(Q) = \tau(Q) I$ and for every $\lambda$ with $\tau(P) < \lambda < \tau(Q)$, there is a projection $R$ with $P \leq  R \leq  Q$ such that $E(R) = \lambda I$. To see this, define the set
\[\mathcal{X}:=\{S \in \mathcal{M}^{+}_1 \mid  \,P \leq S \leq Q, \,E(S) = \lambda\, I\}\, .\]
This set is weak* closed and convex; we will show that it is non-empty. Indeed, consider the set 
\[\mathcal{Y}:=\{E(S): S \in (Q-P)\mathcal{M}^{sa}_1(Q-P)\}\] which is convex and weak* precompact. Let $\mu := 2\lambda - \tau(Q+P)$; it is easy to see that 
\[\mu \in (-\tau(Q-P),\tau(Q-P)) \subset (-1,1).\]
Assume that $\mu \geq 0$ (the proof for when $\mu \leq 0$ is analogous). If we assume further that $\mu$ is not in the weak* closure of $\mathcal{Y}$, we could find an element $A \in L^{1}(\mathcal{A},\tau)$ with $\tau(A) = 1$ and a $c$ with $0 \leq c < \mu$ such that $\tau(AE(S)) = \tau(E(AS)) = \tau(AS) < c$ for each $S$ in $(Q-P)\mathcal{M}^{sa}_1(Q-P)$. But, 
 \[\tau(A(Q-P)) = \tau(E(A(Q-P))) = \tau(AE(Q-P)) = \tau(A \tau(Q-P)) = \tau(Q-P)  > \mu > c.\] This gives us a contradiction and we conclude that $\mu \in \overline{\mathcal{Y}}^{w*}$. 

There is thus a net $S_{\alpha}$ in $(Q-P)\mathcal{M}^{sa}_1(Q-P)$ such that $E(S_{\alpha}) \rightarrow \mu I$. Pick a subnet that converges to an element $S$ in $(Q-P)\mathcal{M}_1^{sa}(Q-P)$. Since the conditional expectation is weak* continuous, we have that $E(S) = \mu I$. We then see that $$P + \dfrac{S+Q-P}{2}\in \mathcal{X}\,.$$ 
 Analogous to the argument in  in \cite[Lemma 2.1]{DFHS}, we conclude that each extreme point in the weak* closed convex set $\mathcal{X}$ must be a projection. Pick one such extreme point projection and call it $R$. The claim follows. 
 

With this in hand, it is easy to see that we can iterate this argument to produce for every $n$, a tower of projections $\{P_{m,n}: 1 \leq m \leq 2^n\}$ with $\tau(P_{m,n}) = m\cdot 2^{-n}$ and with the additional property that $P_{2m-1,n}+P_{2m,n} = P_{m,n-1}$ and so that $E(P_{m,n}) = \tau(P_{m,n})I$. Let $\mathcal{B}$ be the von Neumann algebra generated by $\{P_{m,n}: 1 \leq m \leq 2^n, \, n = 1, 2, \cdots\}$. It is routine to see that $\mathcal{B}$ is diffuse and orthogonal to $\mathcal{A}$.
\end{proof}

\medskip

We can use the above result to prove a Schur-Horn theorem when the majorized tuple has finite spectrum. We first record a reduction applicable to this case.

\begin{proposition}\label{red2}
Let $\mathcal{M}$ be a type II$_1$ factor, $\mathcal{A}$ a masa in $\mathcal{M}$ and let $E$ denote the trace preserving conditional expectation onto $\mathcal A$. The following statements are equivalent:
\begin{enumerate}
\item For every tuple of hermitians $\bold{A}$  in $\mathcal{A}$ with finite spectrum and every tuple of commuting hermitians $\bold{S}$ in $\mathcal{M}$ such that $\bold{A} \prec \bold{S}$, there is a tuple $\bold{T}$ in $\mathcal{O}(\bold{S})$  such that $E(\bold{T}) = \bold{A}$. 
\item For every projection $P\in\mathcal M$ and every
tuple of commuting hermitians $\bold{S} = (S_1,\cdots,S_n)$  such that $P\,\bold S=\bold S$
in $\mathcal{M}$, there is a tuple $\bold{T}$ in $\mathcal{O}(S)$  such that $E(\bold{T}) = \frac{\tau(\bold{S})}{\tau(P)} \, P$. 
\end{enumerate}
\end{proposition}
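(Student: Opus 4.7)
The plan is to dispatch $(1)\Rightarrow(2)$ by noting that $(2)$ is a special case, and attack $(2)\Rightarrow(1)$ by a block-diagonalisation followed by scalar-diagonalisation on each block, using the Choquet reformulation of joint majorization.

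For $(1)\Rightarrow(2)$: the projection $P$ must lie in $\mathcal A$ (otherwise the right-hand side of the target identity is not in $\mathcal A$, making the statement vacuous). Set $\bold A:=\frac{\tau(\bold S)}{\tau(P)}\,P\in\mathcal A^n$, a tuple of finite spectrum, and exhibit $\bold A\prec\bold S$ via the doubly stochastic map
\[
D(X)\;:=\;\frac{\tau(XP)}{\tau(P)}\,P\;+\;\frac{\tau(X(I-P))}{\tau(I-P)}\,(I-P),
\]
which, by $P\bold S=\bold S$, sends each $S_k$ to $\frac{\tau(S_k)}{\tau(P)}\,P=A_k$. Applying $(1)$ then produces the tuple $\bold T$ demanded by $(2)$.

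For $(2)\Rightarrow(1)$: write $\bold A=\sum_{i=1}^m\pmb{\beta}_i Q_i$ with $\{Q_i\}\subset\mathcal A$ a partition of $I$. By the equivalence between $\bold A\prec\bold S$ and Choquet's comparison of the scalar spectral measures (item 3 of Section \ref{sec2}), applied to the atomic decomposition $\mu_{\bold A}=\sum_i\tau(Q_i)\delta_{\pmb{\beta}_i}$, and refining the matching decomposition of $\mu_{\bold S}$ through the abundance of projections in the type II$_1$ factor $\mathcal M$, one produces mutually orthogonal projections $R_1,\ldots,R_m\in\mathcal M$ commuting with $\bold S$, summing to $I$, with $\tau(R_i)=\tau(Q_i)$ and $\tau(\bold S R_i)=\pmb{\beta}_i\tau(R_i)$. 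Since $\{R_i\}\subset\mathcal M$ and $\{Q_i\}\subset\mathcal A\subset\mathcal M$ share the same trace profile, pick a unitary $V\in\mathcal M$ with $V R_i V^{*}=Q_i$ and set $\bold S'':=V\bold S V^{*}\in\mathcal O(\bold S)$. Each $Q_i$ commutes with $\bold S''$ and $\tau(Q_i\bold S'')=\pmb{\beta}_i\tau(Q_i)$, so the compression $\bold S''_i:=Q_i\bold S''$ is a commuting hermitian tuple in the corner $Q_i\mathcal M Q_i$, supported on $Q_i$ and of corner-trace $\pmb{\beta}_i$. Applying $(2)$ in the corner $(Q_i\mathcal M Q_i,\mathcal A Q_i)$ with $P=Q_i$ yields a unitary $U_i\in Q_i\mathcal M Q_i$ with $E_{\mathcal A Q_i}(U_i\bold S''_i U_i^{*})=\pmb{\beta}_i Q_i$. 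The block-diagonal unitary $U:=\sum_i U_i\in\mathcal M$ then satisfies $E(U\bold S''U^{*})=\sum_i\pmb{\beta}_i Q_i=\bold A$, and $\bold T:=(UV)\bold S(UV)^{*}\in\mathcal O(\bold S)$ has the required diagonal.

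The step I expect to be the main obstacle is invoking $(2)$ inside the corners $(Q_i\mathcal M Q_i,\mathcal A Q_i)$, whereas $(2)$ is hypothesised only for $(\mathcal M,\mathcal A)$. The natural reading is to interpret the equivalence as one between the universally quantified versions of $(1)$ and $(2)$ over all type II$_1$ factor--masa pairs, since corners are themselves such pairs. A short supplementary argument, if wanted, shows that $(2)$ for $(\mathcal M,\mathcal A)$ descends to $(Q\mathcal M Q,\mathcal A Q)$ for every $Q\in\mathcal A$: embed the corner data into $\mathcal M$ via $\bold S'\mapsto\bold S'\oplus 0$ and rearrange the unitary produced by $(2)$ with a partial isometry between sub-projections of $Q$, so that the adjusted unitary lies in the corner while preserving the prescribed scalar diagonal.
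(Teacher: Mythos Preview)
Your argument is correct and matches the paper's own proof, which is literally a one-sentence pointer to the method of Theorem~\ref{SHThm} for $(2)\Rightarrow(1)$ (Choquet decomposition, block-diagonalise, scalarise each block) and declares $(1)\Rightarrow(2)$ ``trivial''. The corner-descent subtlety you flag is genuine and is not addressed by the paper's terse proof either; your universal-quantification reading is the cleanest resolution and is consistent with the only use of this proposition (Proposition~\ref{Fang2}), where the standing hypothesis on $E$ passes to corners.
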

\begin{proof}
Notice that $(1) \implies (2)$ is trivial; $(2)$ implies $(1)$ follows from a reduction argument similar to that in the proof of Theorem \ref{SHThm}, 
using the equivalence between Joint majorization and Choquet's notion of comparison of measures. \end{proof}

\medskip

This can be used to prove:
\begin{proposition}\label{Fang2}
Let $\mathcal{M}$ be a type II$_1$ factor, $\mathcal{A}$ a masa in $\mathcal{M}$ and let $E$ be the canonical conditional expectation onto $\mathcal{A}$. Suppose that for every projection $P$ in $\mathcal{M}$, the conditional expectation $E$ restricted to $P\mathcal{M}P$ has non-trivial kernel. Then, for every $n$-tuple $\bold{A} \in \mathcal{A}^n$ with finite joint spectrum and $n$-tuple $\bold{S} \in \mathcal{M}^n$, there is an element $\bold{T} \in \mathcal{O}(\bold{S})$ such that
\[E(\bold{T}) = \bold{A}\,.\]
\end{proposition}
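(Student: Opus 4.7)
The plan is to combine Proposition \ref{red2} with the preceding theorem on diffuse orthogonal subalgebras, exactly along the lines of the Dykema--Hadwin--Fang--Smith strategy. By Proposition \ref{red2} it suffices to handle the scalar-diagonal case: for every projection $P \in \mathcal{M}$ and every commuting tuple $\bold{S}$ of hermitians with $P\bold{S} = \bold{S}$, produce a $\bold{T} \in \mathcal{O}(\bold{S})$ with
\[ E(\bold{T}) = \frac{\tau(\bold{S})}{\tau(P)}\, P.\]
The hypothesis that $E$ has non-trivial kernel on every corner passes routinely to $P\mathcal{M}P$ (with masa $\mathcal{A}P$ and conditional expectation $E_P$), since corners of the corner are still corners of $\mathcal{M}$. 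So the preceding theorem on diffuse orthogonal subalgebras produces a diffuse abelian $\mathcal{B} \subset P\mathcal{M}P$ that is orthogonal to $\mathcal{A}P$; orthogonality translates to the identity $E(B) = (\tau(B)/\tau(P))\,P$ for every $B \in \mathcal{B}$.

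Next I would handle the finite-spectrum case. Write $\bold{S} = \sum_{i=1}^k \pmb{\alpha}_i P_i$ with spectral projections $P_i$ summing to $P$. Using diffuseness of $\mathcal{B}$, choose mutually orthogonal projections $Q_i \in \mathcal{B}$ with $\tau(Q_i) = \tau(P_i)$ and $\sum_i Q_i = P$. Since $\mathcal{M}$ is a factor, a unitary $U \in \mathcal{M}$ (supported on $P$ and equal to the identity on $I-P$) can be chosen with $U P_i U^* = Q_i$. Then $\bold{T} := U\bold{S}U^* = \sum_i \pmb{\alpha}_i Q_i \in \mathcal{B}^n$, and the orthogonality of $\mathcal{B}$ and $\mathcal{A}P$ gives
\[ E(\bold{T}) = \sum_i \pmb{\alpha}_i \frac{\tau(Q_i)}{\tau(P)}\, P = \frac{\tau(\bold{S})}{\tau(P)}\, P,\]
as required. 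Finally, to promote this to the setting of Proposition \ref{red2}, slice along the spectral projections of $\bold{A}$: using the equivalence between joint majorization and Choquet comparison of measures, the condition $\bold{A} \prec \bold{S}$ (implicit in the statement) gives a partition of the identity into projections inside which the finite-spectrum scalar-diagonal case applies coordinatewise.

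For the case in which $\bold{S}$ does not have finite joint spectrum, I would approximate $\bold{S}$ in norm by commuting hermitian tuples $\bold{S}_m$ of finite joint spectrum (still supported on the appropriate projections). Applying the previous step gives unitaries $V_m$ with $V_m \bold{S}_m V_m^* \in \mathcal{B}^n$ and the exact scalar diagonal on each block. The main obstacle I would expect here is upgrading the approximate statement $\|E(V_m \bold{S} V_m^*) - \bold{A}\| \to 0$ to the existence of a single $\bold{T}$ in the norm-closed orbit $\mathcal{O}(\bold{S})$ with $E(\bold{T}) = \bold{A}$ exactly. I would attempt this by an iterative refinement in the spirit of the proof of Theorem \ref{SHThm}: at step $m$ use the scalar-diagonal case inside a small residual corner to absorb the current error, ensuring that the composed conjugating unitaries converge in norm. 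The SOT-closedness of $\mathcal{O}(\bold{S})$ for commuting tuples of hermitians, noted in the introduction, is the key tool to pass the limit into $\mathcal{O}(\bold{S})$.
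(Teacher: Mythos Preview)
Your overall strategy coincides with the paper's: reduce via Proposition~\ref{red2} to the scalar-diagonal case in a corner, invoke the preceding theorem to obtain a diffuse abelian $\mathcal{B}$ orthogonal to the masa there, and then realise the tuple inside $\mathcal{B}$. Your treatment of the finite-spectrum case (matching spectral projections $P_i$ to projections $Q_i \in \mathcal{B}$ of the same trace) is correct and is exactly what the paper's one-line argument specialises to in that case.

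Where you diverge is in the general-$\bold{S}$ case. You propose an approximation by finite-spectrum tuples and then an ``iterative refinement'' to upgrade approximate diagonals to an exact one; this sketch is vague and, as written, not a proof. The paper sidesteps all of this with a single observation you are missing: because $\mathcal{B}$ is \emph{diffuse}, every commuting hermitian tuple $\bold{S}$ (regardless of whether the joint spectrum is finite) has a copy $\bold{T} \in \mathcal{B}^n$ with the same joint spectral distribution, hence $\bold{T} \in \mathcal{O}(\bold{S})$. Orthogonality then gives $E(\bold{T}) = \tau_{P\mathcal{M}P}(\bold{T})\,P$ immediately. So no approximation or iteration is needed; the diffuseness of $\mathcal{B}$ already delivers the exact result in one step. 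Replace your last two paragraphs with this observation and the proof is complete.
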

\begin{proof}
By Proposition \ref{red2}, it is enough to prove the theorem in the case when the tuple $\bold{A}$ consists of scalars. Pick a diffuse abelian  algebra $\mathcal{B}$ orthogonal to $\mathcal{A}$ and a tuple $\bold{T}$ in $\mathcal{B}$ that is approximately unitarily equivalent to $\bold{S}$ and hence in $\mathcal{O}(\bold{S})$. Since $\mathcal{B}$ is orthogonal to $\mathcal{A}$, we see that 
\[E(\bold{T}) = \tau(\bold{S})I = \bold{A}\,.\]
\end{proof}

\medskip

The following result describes conditions on a fixed type II$_1$
 factor $\mathcal M$ and a fixed masa $\mathcal A$ in $\mathcal M$ 
that characterize when the joint majorization relations $\bold A\prec \bold S$ between tuples of commuting hermitians such that  $\bold S$ has finite joint spectrum, have a corresponding exact multivariable Schur-Horn theorem (and complements Proposition \ref{red2}). 

\begin{proposition}\label{red}
Let $\mathcal{M}$ be a type II$_1$ factor, $\mathcal{A}$ a masa in $\mathcal{M}$ and let $E$ denote the trace preserving conditional expectation onto $\mathcal A$. The following statements are equivalent:
\begin{enumerate}
\item For every tuple of hermitians $\bold{A}$ in $\mathcal{A}$ and commuting tuple of hermitians $\bold{S}$ with finite spectrum in $\mathcal{M}$ such that $\bold{A} \prec \bold{S}$, there is a unitary $U$ in $\mathcal{M}$ such that $E(U\bold{S}U^{*}) = \bold{A}$. 
\item For every tuple of hermitians $\bold{A}$  in $\mathcal{A}$ and commuting tuple of projections $\bold{P}$ in $\mathcal{M}$ such that $\bold{A} \prec \bold{P}$, there is a unitary $U$ in $\mathcal{M}$ such that $E(U\bold{P}U^{*}) = \bold{A}$. 
\item For every tuple of positive contractions $\bold{A} = (A_1,\cdots,A_n)$ in $\mathcal{A}$ such that $A_1 + \cdots + A_n \leq I$, there is a tuple $\bold{P}$ of mutually orthogonal projections in $\mathcal{M}$ such that $E(\bold{P}) = \bold{A}$.

\end{enumerate}
\end{proposition}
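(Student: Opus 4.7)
The plan is to prove the circular chain $(1)\Rightarrow(2)\Rightarrow(3)\Rightarrow(1)$. The first implication is immediate, since every commuting tuple of projections is in particular a commuting tuple of hermitians with finite spectrum, so that (1) applied to a projection tuple is exactly (2).

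For $(2)\Rightarrow(3)$, I would use the diffuseness of $\mathcal{A}$ to pick mutually orthogonal projections $R_1,\dots,R_n, R_{n+1}$ in $\mathcal{A}$ summing to $I$ with $\tau(R_i)=\tau(A_i)$ for $1\leq i\leq n$. The explicit map
\[
\Delta(X)=\sum_{i=1}^n \frac{\tau(XR_i)}{\tau(R_i)}\,A_i \;+\; \frac{\tau(XR_{n+1})}{\tau(R_{n+1})}\,\Bigl(I-\sum_{i=1}^n A_i\Bigr)
\]
(with the last term omitted when $\sum_{i=1}^n A_i=I$) is readily verified to be doubly stochastic on $\mathcal{M}$ and to satisfy $\Delta(R_i)=A_i$ for $1\leq i\leq n$. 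Thus $\bold{A}\prec (R_1,\dots,R_n)$, and invoking (2) produces a unitary $U\in\mathcal{M}$ with $E(UR_iU^*)=A_i$; the components of $(UR_1U^*,\dots,UR_nU^*)$ are mutually orthogonal projections, yielding (3).

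The substance of the proposition lies in $(3)\Rightarrow(1)$. Given $\bold{A}\prec\bold{S}$ with $\bold{S}$ of finite spectrum, after a unitary conjugation I may assume $\bold{S}\in\mathcal{A}^n$, so that $\bold{S}=\sum_{j=1}^k\pmb{\alpha}_jP_j$ for mutually orthogonal spectral projections $P_j\in\mathcal{A}$ summing to $I$. Choose a doubly stochastic map $\Delta$ on $\mathcal{M}$ with $\Delta(\bold{S})=\bold{A}$; replacing $\Delta$ by $E\circ\Delta$, which is again doubly stochastic and still sends $\bold{S}$ to $\bold{A}$ (as $\bold{A}\in\mathcal{A}^n$), I may assume $\Delta(\mathcal{M})\subset\mathcal{A}$. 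Then $A_j':=\Delta(P_j)\in\mathcal{A}$ are positive contractions with $\sum_j A_j'=\Delta(I)=I$ and $\sum_j\pmb{\alpha}_jA_j'=\Delta(\bold{S})=\bold{A}$. Applying hypothesis (3) to $(A_1',\dots,A_k')$ produces mutually orthogonal projections $Q_1,\dots,Q_k\in\mathcal{M}$ with $E(Q_j)=A_j'$, and faithfulness of $E$ on positive elements forces $\sum_jQ_j=I$. Setting $\bold{T}:=\sum_{j=1}^k\pmb{\alpha}_jQ_j$, the trace preservation of $\Delta$ yields $\tau(Q_j)=\tau(A_j')=\tau(P_j)$ for every $j$, so $\bold{S}$ and $\bold{T}$ are commuting tuples of hermitians with identical joint spectral distributions and are therefore unitarily equivalent in $\mathcal{M}$. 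Any unitary $U$ with $U\bold{S}U^*=\bold{T}$ then satisfies $E(U\bold{S}U^*)=\sum_j\pmb{\alpha}_jA_j'=\bold{A}$.

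The principal obstacle is this last implication: converting the carpenter-style data produced by (3) into a unitary conjugation of the entire tuple $\bold{S}$. The crucial point is that the trace preservation of the witnessing doubly stochastic map $\Delta$ automatically forces $\tau(Q_j)=\tau(P_j)$, which in turn guarantees that the synthetic tuple $\bold{T}$ assembled from the $Q_j$ lies in the unitary orbit of $\bold{S}$. Here the \emph{mutual} orthogonality demanded by (3), rather than mere commutativity, is essential: it is precisely what allows the $Q_j$ to play the role of relabeled spectral projections for $\bold{T}$.
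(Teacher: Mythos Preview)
Your proof is correct and close in spirit to the paper's, but the circular chains differ in an instructive way. The paper argues $(1)\Rightarrow(2)\Rightarrow(3)\Rightarrow(2)\Rightarrow(1)$: its step $(3)\Rightarrow(2)$ must handle an arbitrary \emph{commuting} tuple of projections $(P_1,\dots,P_n)$, which it does by passing to the Boolean atoms (e.g.\ for $n=2$, applying (3) to the triple $(\Delta(P_1P_2),\,\Delta(P_1)-\Delta(P_1P_2),\,\Delta(P_2)-\Delta(P_1P_2))$ and then reassembling), and its $(2)\Rightarrow(1)$ is exactly your argument with (2) invoked on the tuple of spectral projections. You instead go $(3)\Rightarrow(1)$ directly, exploiting the fact that the spectral projections $P_j$ of $\bold S$ are already \emph{mutually orthogonal}, so that (3) applies to $(\Delta(P_1),\dots,\Delta(P_k))$ without any Boolean decomposition. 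This shortcut is legitimate and slightly more economical for the purpose of establishing the equivalence; the paper's detour through $(3)\Rightarrow(2)$ has the independent merit of showing how to recover commuting-projection data from orthogonal-projection data, but is not needed for the logical equivalence of (1)--(3).
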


\begin{proof} Notice that $(1) \implies (2)$ is trivial while $(2) \implies (3)$ follows using arguments in the proof of 
Proposition \ref{Pro}: explicitely, if $\bold P=(P_1,\cdots,P_n)$ is a tuple of mutually orthogonal projections such that
$\tau(P_i)=\tau(A_i)$ for $1\leq i\leq n$ then $\bold A\prec \bold P$.

To show $(3) \implies (2)$ we consider the case $n=2$; the general case is a routine extension. Let $\Delta$ be a doubly stochastic map such that $\Delta(P_1) = A_1$ and $\Delta(P_2) = A_2$. Let $P_{12} = P_1 P_2$ and let $A_{12} = \Delta(P_{12})$. Since $A_{12} + (A_1 - A_{12}) + (A_2 - A_{12}) = A_1 + A_2 - A_{12} = \Delta(P_1 + P_2 - P_{12})$, the operator tuple $\bold{B} = (A_{12},A_1 - A_{12},A_2 - A_{12})$ satisfies the hypothesis of $(3)$ and there is a tuple of orthogonal projections $\overline{Q} = (Q_1,Q_2,Q_3)$ such that $E(\overline{Q}) = \bold{B}$. Therefore, $E[(Q_1 + Q_2,Q_1 + Q_3)] = (A_1,A_2)$ and we also have that $\tau(P_1) = \tau(Q_1 + Q_2)$, $\tau(P_2) = \tau(Q_1 + Q_3)$ and $\tau(P_1 P_2) = \tau(P_{12}) = \tau(A_{12}) = \tau(Q_1) = \tau[(Q_1 + Q_2)(Q_1 + Q_3)]$. These last facts show
that the tuples $(P_1,P_2)$ and $(Q_1 + Q_2,Q_1+Q_3)$ are unitarily equivalent. Letting $U$ be a unitary that implements this equivalence, we have that $E(U\bold{P}
U^{*}) = \bold{A}$. 

 Finally, we show $(2) \implies (1)$: let's suppose we are dealing with $n$-tuples. Write $\bold{S} = \sum_{k=1}^m \pmb{\alpha_k}P_k$ and let $\Delta$ be the doubly stochastic map such that $\Delta(\bold{S}) = \bold{A}$. After composing with a conditional expectation, we may assume that the range of $\Delta$ lies in $\mathcal{A}$. We have that $\bold{B} = (\Delta(P_1),\cdots,\Delta(P_m))\prec \bold{P} = (P_1,\cdots,P_m)$ and hence, by $(2)$, there is a unitary $U$ such that $E(U\bold{P}U^{*}) = \bold{B}$. If we let $X$ be the $n \times m$ matrix given by $X_{ij} = \pmb{\alpha_j}(i)$, we see that $\bold{S} = X \bold{P}^t$ and that $\bold{A} = X \bold{B}^t$. Thus,
 \[E(U\bold{S} U^{*}) = E(U X\bold{P}^tU^{*}) = X E(U\bold{P}U^{*})^t = X\bold{B}^t = \bold{A}     \,.\]
\end{proof}

\section[Discrete factors]{Multivariable Schur-Horn theorems in discrete factors}

At the beginning of section \ref{obs} we considered an example (for $3\times 3$ normal matrices) which showed that the natural extension of the classical Schur-Horn theorem (for hermitian matrices) 
to normal matrices fails. In this section we show that it is possible to dilate such a problem and obtain a positive result.
On the other hand, we investigate the approximate diagonals of commuting $n$-tuples of selfadjoint operators in $B(\mathcal H)$ with finite spectrum (all with infinite multiplicity). In particular, we show that in this context there are no non-trivial obstructions for a sequence in $\mathbb R^n$ to be an approximate diagonal. This contrasts with the results in 
\cite{ArDiPN} on exact diagonals of normal operators with finite spectrum. We end this section with some final remarks.

\subsection{Arveson's example revisited}\label{secDS}

Throughout this section $\mathcal M_d$ denotes the algebra of $d\times d$ complex matrices, $\mathcal A_d\subset \mathcal M_d$ the masa of diagonal matrices and $E_d:\mathcal M_d\rightarrow \mathcal M_d$ denotes the compression to the diagonal i.e. the trace preserving conditional expectation onto $\mathcal A_d$.

Recall that in the matrix context, given normal matrices $A,\,S\in \mathcal M_d$ with eigenvalues $\pmb\mu,\,\pmb\lambda\in \mathbb C^d$ respectively then $A\prec S$ (or equivalently the commuting two tuple $(\text{Re}(A),\text{Im}(A))$ is majorized by the commuting two tuple  $(\text{Re}(S),\text{Im}(S))$) if and only if there exists an $d\times d$ doubly stochastic matrix $D$ such that $D\pmb\lambda=\pmb\mu$.
 
Let us recall Arveson's example from section \ref{obs}. There we considered the normal matrices $A,\,S\in\mathcal M_3$ with eigenvalues $\pmb\mu = (\dfrac{1}{2},\dfrac{i}{2},\dfrac{1+i}{2})$ and $\pmb\lambda =(1,0,i)$; if we let $D$ be the doubly stochastic matrix in Eq. \eqref{exa ds not uni}
then, we have that $D\pmb\lambda = \pmb\mu$. However, there is no unitary matrix $U\in\mathcal M_3$ so that $E_3(USU^*) = A$ or equivalently the multivariable Schur-Horn theorem in $\mathcal M_3$ fails.
 Nevertheless, it is fairly simple to compute examples of unitary matrices $U\in \mathcal M_6$ such that the normal matrix $U^*(N\oplus N)\,U$ has main diagonal $\pmb\mu\oplus \pmb \mu\in\C^6$. This suggests that we could get a Schur-Horn type result for a suitable dilation of the original problem.

In order to put the previous claim in context, in what follows we consider the unital $*$-subalgebra $\mathcal{A}_m\otimes \mathcal M_d\subset \mathcal M_m \otimes \mathcal M_d$ together with its associated trace preserving conditional expectation $E_{\mathcal{A}_m\otimes \mathcal M_d}$ determined by $$E_{\mathcal{A}_m\otimes \mathcal M_d}(A\otimes B)=E_m(A)\otimes B\, .$$
Also, given $\alpha\in\C^m$ we denote by $\operatorname{diag}(\alpha) \in \mathcal{A}_m$ the diagonal matrix with main diagonal $\alpha$.

\begin{proposition}\label{pro0}
Let $\Delta:\mathcal M_d\rightarrow \mathcal M_d$ be given by $\Delta(B)=\sum_{j=1}^m \alpha_j\, U_j \,B \, U_j^*$, where $U_j\in \mathcal M_d$, $\|U_j\|\leq 1$ for $1\leq j\leq m$ and $\pmb\alpha=(\alpha_j)_{j=1}^m$ are coefficients for a convex combination. Then, there exists $U\in \mathcal M_{m}\otimes \mathcal M_d$ with $\|U\|\leq 1$, such that for every $B\in\mathcal  M_d$ we have that
\begin{eqnarray}\label{eq pro c}
 E_{\mathcal{A}_m\otimes \mathcal M_d} (U\, ( \operatorname{diag}(\pmb\alpha)\otimes B )\,U^*)=\frac{1}{m} \, I_m\otimes \Delta(B)\,.
  \end{eqnarray} 
 Moreover, if $U_j\in\mathcal M_d$ is unitary for $1\leq j\leq m$ then we can chose a unitary $U\in\mathcal M_{m}\otimes \mathcal M_{d}$ satisfying Eq. \eqref{eq pro c}. 
\end{proposition}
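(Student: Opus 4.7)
The plan is to construct $U$ explicitly by means of a complex Hadamard scaffold. Recall that for every $m$ there is a matrix $C = (c_{ik})_{i,k=1}^m \in \mathcal{M}_m$ with $|c_{ik}| = 1$ for all $i,k$ and $C^*C = m\, I_m$ (for instance, the Fourier matrix $c_{ik} = e^{2\pi \imath (i-1)(k-1)/m}$). Using such a $C$, I would set
$$U := \frac{1}{\sqrt m} \sum_{i,k=1}^m c_{ik}\, E_{ik} \otimes U_k \in \mathcal{M}_m \otimes \mathcal{M}_d,$$
where $\{E_{ik}\}$ are the standard matrix units of $\mathcal{M}_m$, and show that this single $U$ works for all $B$ simultaneously.

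The first step is to verify the identity (\ref{eq pro c}) by direct expansion. Writing $\operatorname{diag}(\pmb\alpha) \otimes B = \sum_l \alpha_l\, E_{ll} \otimes B$ and using $E_{ik} E_{ll} E_{k' i'} = \delta_{kl}\delta_{k'l}\,E_{ii'}$, one gets
$$U(\operatorname{diag}(\pmb\alpha) \otimes B)\,U^* = \frac{1}{m}\sum_{i,i',k} \alpha_k\, c_{ik}\,\overline{c_{i'k}}\, E_{ii'}\otimes U_k\,B\,U_k^*.$$
Applying $E_{\mathcal{A}_m \otimes \mathcal{M}_d}$ retains only the $i=i'$ terms; since $|c_{ik}|^2 = 1$, those scalar coefficients collapse to $1$, leaving exactly $\frac{1}{m} I_m \otimes \Delta(B)$.

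The second step is to bound the norm of $U$. A routine computation gives
$$U^*U = \frac{1}{m}\sum_{k,k'} \Big(\sum_{i=1}^m \overline{c_{ik}}\,c_{ik'}\Big)\, E_{kk'} \otimes U_k^*\,U_{k'}.$$
The Hadamard condition $C^*C = m\,I_m$ rewrites as $\sum_i \overline{c_{ik}}\,c_{ik'} = m\,\delta_{kk'}$, so
$$U^*U = \sum_{k=1}^m E_{kk} \otimes U_k^*\,U_k,$$
a block-diagonal operator with blocks $U_k^* U_k \leq I_d$. This immediately yields $\|U\|\leq 1$ in full generality, and when every $U_k$ is unitary it gives $U^*U = I_{md}$, so $U$ is an isometry on a finite-dimensional space and therefore unitary.

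The main conceptual point — and the only obstacle worth flagging — is the simultaneous need for two competing properties of the scalar matrix $C$: the entries must have modulus one so that the diagonal blocks in Step one come out equally weighted, while its columns must be orthogonal so that the off-diagonal blocks in $U^*U$ vanish. Both are satisfied precisely by complex Hadamard matrices, whose existence in every dimension is what makes the dilation work.
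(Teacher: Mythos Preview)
Your proof is correct and follows essentially the same construction as the paper's: the paper also sets
\[
U=\frac{1}{\sqrt m}\sum_{j,k=1}^{m}\omega^{jk}\,E_{jk}\otimes U_k,\qquad \omega=e^{2\pi i/m},
\]
and performs the same two computations (the conditional expectation identity and the block-diagonal form of $U^*U$). The only difference is cosmetic: you phrase the construction in terms of an arbitrary complex Hadamard matrix $C$ rather than the specific Fourier matrix, which makes the two roles of $C$ (unimodular entries for the diagonal calculation, orthogonal columns for the $U^*U$ calculation) more transparent but does not alter the argument.
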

\begin{proof} Consider $\omega=\exp(\frac{2\,\pi\,i}{m})$ and let  $U\in \mathcal M_{m}\otimes \mathcal M_{d}$ be given by 
\begin{equation}\label{eq uustar}   U= \frac{1}{\sqrt{m}} \ \sum_{j,\,k=1}^m    \omega^{(j\cdot k)}\ E_{jk}\otimes  U_k \ ,\end{equation}
where $\{E_{jk}\}_{j,k=1}^m$ denotes the canonical basis of $\mathcal M_{m}$. Using that $\sum_{j=1}^m \omega^{\,j\cdot k}=0$ for every $1\leq |k|\leq m-1$ we get that 
\begin{eqnarray*}
U^*U&=&
\frac{1}{m} \ \sum_{k=1}^m\ E_{kk}\otimes  U_k^* \, U_{k}\,. 
\end{eqnarray*}  
In particular $\|U\|\leq 1$. On the other hand, if $B\in \mathcal M_d$ then
\begin{eqnarray*}
U\,(\text{diag}(\pmb\alpha)\otimes B)\,U^* &=& \frac{1}{m} \sum_{j,\,k\,,j',k'=1}^m  \omega^{k\cdot j- k'\cdot j'} E_{jk}\,\text{diag}(\pmb\alpha)\, E_{k'j'}\otimes U_k B \,U_{k'}^*\\
&=& \frac{1}{m} \sum_{j,\,k,\,j'=1}^m  \omega^{k\cdot(j- j')} \alpha_k \, E_{jj'} \otimes U_k B \,U_{k}^*\ .
\end{eqnarray*}
Using the computations above we see that  
\[E_{\mathcal{A}_m\otimes \mathcal M_d}( U\,(\text{diag}(\pmb\alpha)\otimes B)\,U^*)=\frac{1}{m} \sum_{j,k=1}^m  E_{jj} \otimes \alpha_k \, U_k B \,U_{k}^*=\frac{1}{m} I_m\otimes \Delta(B).\]
 Finally, assume that $U_j\in\mathcal M_d$ is unitary for $1\leq j\leq m$. Then, if $U$ is defined as in Eq. \eqref{eq uustar}, the computations above show that $U^*U=I_{m\cdot n}$ and hence $U\in\mathcal 
 M_{m\cdot d}$ is a unitary matrix.
 \end{proof}

\begin{corollary}\label{cor:inflation of dsmaps}
Let $D\in\mathcal M_d$ be a doubly stochastic matrix. Then, there exists $m\in\mathbb N$, coefficients for a convex combination $\pmb\alpha=(\alpha_j)_{j=1}^m$	and a unitary $U\in\mathcal M_{m}\otimes \mathcal M_{n}=\mathcal M_{m\cdot n}$ such that for every 
$\pmb\beta\in\mathbb C^d$
\begin{equation}\label{eq: inflation of dsmaps}
E_{m\cdot d}(U\,(\text{diag}(\pmb \alpha)\otimes \text{diag}(\pmb \beta))\,U^*)=\frac{1}{m}\ I_m\otimes \text{diag}(D\,\pmb \beta)\,.
\end{equation}In particular, if $\alpha_j=\frac{1}{m}$ for $1\leq j\leq m$ then we get that 
for every 
$\pmb\beta\in\mathbb C^d$
\begin{equation}\label{eq: inflation of dsmaps2}
E_{m\cdot d}(U\,(I_m\otimes \text{diag}(\pmb \beta))\,U^*)= I_m\otimes \text{diag}(D\,\pmb \beta)\,.
\end{equation}
\end{corollary}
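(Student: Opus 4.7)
The plan is to reduce to Proposition \ref{pro0} by invoking the Birkhoff--von Neumann theorem to express $D$ as a convex combination of permutation matrices. Concretely, write
\[ D=\sum_{j=1}^m \alpha_j \, P_{\sigma_j}, \]
where $P_{\sigma_j}\in\mathcal M_d$ are the (unitary) permutation matrices associated with permutations $\sigma_j$ of $\{1,\dots,d\}$, and $\pmb\alpha=(\alpha_j)_{j=1}^m$ are coefficients of a convex combination. The key elementary observation is that conjugation by a permutation matrix permutes diagonals, i.e.\ $P_{\sigma}\,\text{diag}(\pmb\beta)\,P_{\sigma}^*=\text{diag}(P_\sigma\pmb\beta)$ for every $\pmb\beta\in\C^d$. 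Consequently, the convex combination of unitary conjugations
\[ \Delta(B)=\sum_{j=1}^m \alpha_j\, P_{\sigma_j}\, B\, P_{\sigma_j}^*,\qquad B\in\mathcal M_d,\]
satisfies $\Delta(\text{diag}(\pmb\beta))=\text{diag}(D\pmb\beta)$ uniformly in $\pmb\beta$.

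Next, I would apply Proposition \ref{pro0} with this particular $\Delta$ (using $U_j=P_{\sigma_j}$, which are unitary). This yields a unitary $U\in \mathcal M_m\otimes \mathcal M_d$ such that, for every $B\in\mathcal M_d$,
\[ E_{\mathcal A_m\otimes \mathcal M_d}\bigl(U\,(\text{diag}(\pmb\alpha)\otimes B)\,U^*\bigr)=\frac{1}{m}\,I_m\otimes \Delta(B). \]
Specializing to $B=\text{diag}(\pmb\beta)$ gives $\frac{1}{m}\,I_m\otimes \text{diag}(D\pmb\beta)$ on the right. To upgrade from $E_{\mathcal A_m\otimes \mathcal M_d}$ to the full diagonal expectation $E_{m\cdot d}$, note the factorization $E_{m\cdot d}=(\operatorname{id}\otimes E_d)\circ E_{\mathcal A_m\otimes \mathcal M_d}$; since the right-hand side already lies in $\mathcal A_m\otimes \mathcal A_d$, the extra application of $\operatorname{id}\otimes E_d$ acts as the identity. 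This yields Eq.\ \eqref{eq: inflation of dsmaps}.

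Finally, for the ``in particular'' clause, if $\alpha_j=\frac{1}{m}$ for all $j$, then $\text{diag}(\pmb\alpha)=\frac{1}{m}I_m$, so $\text{diag}(\pmb\alpha)\otimes \text{diag}(\pmb\beta)=\frac{1}{m}(I_m\otimes \text{diag}(\pmb\beta))$, and pulling the scalar $\frac{1}{m}$ outside $E_{m\cdot d}$ and cancelling with the factor on the right gives Eq.\ \eqref{eq: inflation of dsmaps2}.

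There is no genuine obstacle here: the only subtlety worth flagging is that the unitary $U$ provided by Proposition \ref{pro0} depends only on $\pmb\alpha$ and the $P_{\sigma_j}$, not on $\pmb\beta$, so a single $U$ simultaneously implements $D$ on all diagonal inputs; and one should observe that the two conditional expectations $E_{\mathcal A_m\otimes \mathcal M_d}$ and $E_{m\cdot d}$ agree on the outputs at hand because the image already lies in the smaller algebra $\mathcal A_{m\cdot d}=\mathcal A_m\otimes \mathcal A_d$.
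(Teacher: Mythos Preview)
Your proof is correct and follows essentially the same approach as the paper: invoke Birkhoff's theorem to write $D$ as a convex combination of permutation matrices, observe that conjugation by permutations acts on diagonals via $P_\sigma\,\text{diag}(\pmb\beta)\,P_\sigma^*=\text{diag}(P_\sigma\pmb\beta)$ so that the associated map $\Delta$ sends $\text{diag}(\pmb\beta)$ to $\text{diag}(D\pmb\beta)$, apply Proposition~\ref{pro0} with the unitary $U_j=P_{\sigma_j}$, and then pass from $E_{\mathcal A_m\otimes\mathcal M_d}$ to $E_{m\cdot d}$ using the tower property (the paper phrases this as $E_{m\cdot d}\circ E_{\mathcal A_m\otimes\mathcal M_d}=E_{m\cdot d}$, you via the factorization $E_{m\cdot d}=(\mathrm{id}\otimes E_d)\circ E_{\mathcal A_m\otimes\mathcal M_d}$; both are equivalent here).
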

\begin{proof} By Birkhoff's theorem (see \cite{BhaBoo}) there exists coefficients for a convex combination $\pmb\alpha=(\alpha_j)_{j=1}^m$ and permutation matrices $P_1,\cdots,P_m\in\mathcal M_d$ such that $D=\sum_{j=1}^m \alpha_j\,P_j$. 
Let $\Delta:M_d\rightarrow \mathcal M_d$ be given by $\Delta(B)=\sum_{j=1}^m \alpha_j\, P_j\, B\, P_j^*$.
Using the fact that $P_j\,\text{diag}(\pmb \beta)\,P_j^*=\text{diag}(P_j\,\pmb \beta)$ we see that 
\begin{equation}\label{eq. delta dsmap}
\Delta(\text{diag}(\pmb \beta))=\text{diag}(D\,\pmb \beta) \quad \text{for}\quad \pmb \beta\in\mathbb C^d\,.
\end{equation}
By construction of $\Delta$ we can apply Proposition \ref{pro0} and get a unitary $U\in\mathcal M_m\otimes \mathcal M_d=\mathcal M_{m\cdot d}$ for which Eq. \eqref{eq pro c} holds. Hence, using that $E_{m\cdot d}(E_{\mathcal{A}_m\otimes \mathcal M_d}(C))=E_{m\cdot d}(C)$ for $C\in\mathcal M_{m\cdot d}$, together with Eq. \eqref{eq. delta dsmap} we see that Eq. \eqref{eq: inflation of dsmaps} holds. The second part of the statement is a immediate consequence of the previous facts.
\end{proof}

\medskip

We now consider again Arveson's example described at the beginning of this section.
Let $D\in DS(3)$ be defined as in Eq. \eqref{exa ds not uni} and consider the normal matrices $A,\,S\in\mathcal M_3$ with eigenvalues $\pmb\mu = (\dfrac{1}{2},\dfrac{i}{2},\dfrac{1+i}{2})$ and $\pmb\lambda =(1,0,i)$; hence $D\pmb\lambda=\pmb\mu$.
It is easy to see that in this case there exist permutation matrices $P_1,\,P_2\in \mathcal M_3$ such that $D=1/2\cdot (P_1+P_2)$. Hence, by Corollary \ref{cor:inflation of dsmaps} 
there exists a unitary $U\in \mathcal M_6$ such that 
$$ E_{6}(U \, (I_2\otimes \text{diag}(\pmb\lambda)) \, U^* )=I_2\otimes \text{diag}(\pmb\mu)$$
This last fact explains why we can obtain a Schur-Horn theorem for a dilation of the original problem as claimed.

\begin{remark}\label{rem: not the gen case} Corollary \ref{cor:inflation of dsmaps} shows that there are doubly stochastic matrices $D\in \mathcal M_d$ for which there exist $m\geq 1$ and a unitary $U\in\mathcal M_{m\cdot d}$ such that Eq. 
\eqref{eq: inflation of dsmaps2} holds, for every $\pmb\beta\in\C^d$. 
It is natural to wonder whether this is the general case i.e., given an arbitrary $d\times d$ doubly stochastic matrix $D$ we can ask
whether there always exist $m\geq 1$ and a unitary $U\in \mathcal M_{m\cdot d}$ such that Eq. \ref{eq: inflation of dsmaps2} holds for every $\pmb\beta\in\C^d$. 
It turns out that this is not the case.
 Indeed, consider the following variation of Arveson's example:  fix positive irrationals $a$ and $b$ adding up to one and let $D$ be the doubly stochastic matrix given by
$$D = \left( \begin{array}{ccc}
a & b & 0\\
0 & a & b \\ 
b & 0 & a \end{array} \right)\ .$$ 
\end{remark}

\begin{proposition} With the notations of Remark \ref{rem: not the gen case}, there is no $m\geq 1$ so that there exists a unitary $U\in \mathcal M_{3m}$ such that $$ E_{3m}(U^* (I_m\otimes \text{diag}(\pmb\lambda))\,U )=I_m\otimes \text{diag}(D \pmb\lambda) \ \text{ for every } \pmb\lambda\in \C^3\, .$$
\end{proposition}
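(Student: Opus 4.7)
The plan is to derive a contradiction by specializing the hypothesis to the standard basis vectors of $\mathbb{C}^3$ and then extracting a projection with irrational trace. Suppose such $m \geq 1$ and unitary $U \in \mathcal{M}_{3m}$ existed. Setting $\pmb{\lambda} = e_j \in \mathbb{C}^3$ for $j = 1,2,3$, and writing $P_j := U^*(I_m \otimes E_{jj})U$ and $Q_k := I_m \otimes E_{kk}$, we get three pairwise orthogonal projections $P_1, P_2, P_3$ in $\mathcal{M}_{3m}$, each of rank $m$, summing to $I_{3m}$, and such that the diagonal of $P_j$ equals the diagonal of $I_m \otimes \text{diag}(De_j)$. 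In particular, the $3l+k$ diagonal entries of $P_j$ (for $l = 0, \dots, m-1$) are the entries of $De_j$, so that
\[\operatorname{tr}(P_j Q_k) = m \cdot (De_j)_k, \qquad 1 \leq j,k \leq 3.\]

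In the second step I would read off, from the matrix $D$, which of these traces vanish: $\operatorname{tr}(P_1 Q_2) = \operatorname{tr}(P_2 Q_3) = \operatorname{tr}(P_3 Q_1) = 0$. Because $P_j$ and $Q_k$ are positive, each such vanishing trace forces the corresponding product to be zero, giving $P_1 Q_2 = P_2 Q_3 = P_3 Q_1 = 0$. Hence $P_1$ annihilates $W_2 := \operatorname{range}(Q_2)$, $P_2$ annihilates $W_3$, and $P_3$ annihilates $W_1$. Writing $I = P_1 + P_2 + P_3$ in the block decomposition $\mathbb{C}^{3m} = W_1 \oplus W_2 \oplus W_3$, the $(W_1, W_3)$ off-diagonal block of the identity reads
\[0 = Q_1 P_1 Q_3 + Q_1 P_2 Q_3 + Q_1 P_3 Q_3 = Q_1 P_1 Q_3,\]
since $P_2 Q_3 = 0$ and $P_3 Q_1 = 0$. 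Therefore $P_1 = Q_1 P_1 Q_1 + Q_3 P_1 Q_3$.

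In the final step I would use that $Q_1 Q_3 = 0$ to expand
\[P_1 = P_1^2 = (Q_1 P_1 Q_1)^2 + (Q_3 P_1 Q_3)^2,\]
which forces each of the self-adjoint positive operators $Q_1 P_1 Q_1$ and $Q_3 P_1 Q_3$ to be idempotent, hence orthogonal projections. But $Q_1 P_1 Q_1$ is a projection in the finite-dimensional algebra $Q_1 \mathcal{M}_{3m} Q_1 \cong \mathcal{M}_m$ and its trace is $\operatorname{tr}(P_1 Q_1) = ma$. A projection in $\mathcal{M}_m$ must have integer rank, so we would need $ma \in \mathbb{Z}$; but $a$ is irrational and $m \geq 1$, a contradiction.

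The main obstacle is the step showing that the $(W_1, W_3)$ off-diagonal block of $P_1$ vanishes, which allows $Q_1 P_1 Q_1$ to be cleanly identified as a projection. Once that is in hand, the irrationality of the diagonal entries of $D$ immediately rules out the integer-trace condition. The argument also makes transparent why the original Arveson example with rational entries admits a dilation (the corresponding traces $ma$ are integers for suitable $m$), while the irrational version is genuinely obstructed.
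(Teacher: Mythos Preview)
Your argument is correct. It follows the same core idea as the paper's proof---exploit the three zero entries of $D$ to produce a projection in $\mathcal{M}_m$ whose trace is $ma$, contradicting the irrationality of $a$---but the execution differs in a way worth noting.

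The paper works directly with the $3\times 3$ block decomposition $U=(U_{ij})$ of the unitary: the zero entries of $D$ force certain blocks $U_{ij}$ to vanish, and then the row/column orthogonality relations of a unitary yield $U_{11}U_{31}^*=0$ and $U_{11}^*U_{11}+U_{31}^*U_{31}=I$. A separate lemma (Lemma~\ref{Irr}) is then invoked to conclude that $U_{11}$ is a partial isometry, so $\operatorname{Tr}(U_{11}^*U_{11})=ma$ must be an integer. Your version repackages this more intrinsically: you pass immediately to the projections $P_j=U^*(I_m\otimes E_{jj})U$, use $\operatorname{tr}(P_jQ_k)=0\Rightarrow P_jQ_k=0$, and then read off the vanishing of the $(Q_1,Q_3)$ off-diagonal block of $P_1$ from the identity $I=P_1+P_2+P_3$. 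The idempotence of $Q_1P_1Q_1$ then drops out of $P_1=P_1^2$ without any auxiliary lemma. In block language your $Q_1P_1Q_1$ is precisely the paper's $U_{11}^*U_{11}$, so the two arguments are the same computation viewed from different angles; your presentation has the advantage of being coordinate-free and self-contained.
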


\begin{proof}
Suppose there is in fact a matrix $U\in \u(3m)$ such that \[ E_{\mathcal{A}_m\otimes \mathcal A_3}(U^* (I_m\otimes \text{diag}(\lambda))\,U )=I_m\otimes \text{diag}(D \lambda)\] 
Let us write 
\[U = \left( \begin{array}{ccc}
U_{11} & U_{12} & U_{13}\\
U_{21} & U_{22} & U_{23} \\ 
U_{31} & U_{32} & U_{33} \end{array}\right) \]
A simple matrix calculation shows then that 
\[ D = \left( \begin{array}{ccc}
D_{11} & D_{12} & D_{13}\\
D_{21} & D_{22} & D_{23} \\ 
D_{31} & D_{32} & D_{33} \end{array}\right) \left( \begin{array}{c} 
\tilde{\alpha}\\
\tilde{\beta}\\
\tilde{\gamma}\end{array}\right) =  \left( \begin{array}{c} 
\tilde{x}\\
\tilde{y}\\
\tilde{z}\end{array}\right)\]
where $D(\alpha,\beta,\gamma) = (x,y,z)$, where $\tilde{\alpha}$ represents the vector $\left( \begin{array}{c} 
\alpha\\
\vdots\\
\alpha\end{array}\right)$ and $D_{ij}$ is the matrix given by $D_{ij}(k,l) = |U_{ij}(k,l)|^2$. We have,
\[ D \left( \begin{array}{c} 
\tilde{1}\\
\tilde{0}\\
\tilde{0}\end{array}\right) =  \left( \begin{array}{c} 
\tilde{a}\\
\tilde{0}\\
\tilde{b}\end{array}\right)\quad \text{and} \quad D \left( \begin{array}{c} 
\tilde{0}\\
\tilde{1}\\
\tilde{0}\end{array}\right) =  \left( \begin{array}{c} 
\tilde{b}\\
\tilde{a}\\
\tilde{0}\end{array}\right) \quad \text{ and } \quad  D \left( \begin{array}{c} 
\tilde{0}\\
\tilde{0}\\
\tilde{1}\end{array}\right) =  \left( \begin{array}{c} 
\tilde{0}\\
\tilde{b}\\
\tilde{a}\end{array}\right)\]
In particular, we have
\[\operatorname{Tr}(U_{11} U_{11}^{*}) = \sum_{k,l}|U_{11}(k,l)|^2 = \sum_{k,l}D_{11}(k,l) = m\alpha\]
Similarly,
\[\operatorname{Tr}(U_{31} U_{31}^{*}) = \sum_{k,l}|U_{31}(k,l)|^2 = \sum_{k,l}D_{31}(k,l) = m\beta\]
Now note that $D_{13}$ and $D_{32}$ and hence $U_{13}$ and $U_{32}$ are zero. 
We thus have that $U_{11}U_{31}^{*} = 0$. We also have that $U_{11}^{*}U_{11} + U_{31}^{*}U_{31} = I$. We conclude from Lemma \ref{Irr} below that $U_{11}$ is a partial isometry and hence $\operatorname{Tr}(U_{11}^{*}U_{11}) = m\alpha$ is an integer. Thus, $\alpha$ cannot be irrational. 
\end{proof}

\begin{lemma}\label{Irr}
 Suppose $A$ and $B$ are in $\m_d$ so that $A^{*}A + B^{*}B = I$ and $AB^{*} = 0$. Then, $A$ and $B$ are partial isometries.
\end{lemma}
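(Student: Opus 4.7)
The plan is to exploit both hypotheses simultaneously to force $AA^*$ (and $BB^*$) to be an idempotent self-adjoint operator, which is exactly the condition for $A$ (resp.\ $B$) to be a partial isometry.

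First I would rewrite the hypothesis $A^*A + B^*B = I$ as the statement that the column operator $\bigl(\begin{smallmatrix} A \\ B \end{smallmatrix}\bigr) \colon \CC^d \to \CC^{2d}$ is an isometry. Taking adjoints, the condition $AB^* = 0$ gives the companion identity $BA^* = 0$, so in the context of the isometry above, the two row blocks have orthogonal ranges.

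The key computation is then to sandwich the identity $A^*A + B^*B = I$ between $A$ on the left and $A^*$ on the right, obtaining
\[
AA^*AA^* + AB^*BA^* \;=\; AA^*.
\]
The middle summand vanishes because $AB^* = 0$, so $(AA^*)^2 = AA^*$. Since $AA^*$ is automatically self-adjoint, it is an orthogonal projection, and this is equivalent to $A$ being a partial isometry (for instance because $\|Ax\|^2 = \langle A^*Ax, x\rangle$ and $A^*A$ and $AA^*$ are simultaneously projections). The symmetric sandwich with $B$ on the left and $B^*$ on the right, using $BA^* = 0$, yields $(BB^*)^2 = BB^*$, so $B$ is a partial isometry as well.

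There is no real obstacle here: the entire content of the lemma is the observation that the orthogonality condition $AB^* = 0$ kills the only cross term that prevents the quadratic identity $A^*A + B^*B = I$ from forcing each summand to be a projection. I would present the proof as the three-line calculation above, with the one-line remark that $AA^*$ being a projection characterises $A$ as a partial isometry.
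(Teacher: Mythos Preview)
Your proof is correct and is in fact more direct than the paper's. The paper takes polar decompositions $A = UH$, $B = VK$, deduces $H^2 + K^2 = I$ and $HK = 0$, and argues from these that the positive parts $H$ and $K$ must be projections. Your sandwich $A(A^*A + B^*B)A^* = AA^*$ together with $AB^*=0$ gives $(AA^*)^2 = AA^*$ in one line, bypassing polar decomposition entirely. The paper's route yields the extra observation that $A^*A$ and $B^*B$ are \emph{complementary} projections, but this is immediate from your argument too, since $A^*A + B^*B = I$ and both summands are now projections. Your approach is the cleaner one here.
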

\begin{proof}
 Letting $A = UH$ and $B = VK$ be the polar decompositions, we have that $H^2 + K^2 = I$ and $UHKV^{*} = 0$. Thus,
 \[HK = U^{*}U HK V^{*}V = 0\]
 The two equations $H^2 + K^2 = I$ and $HK = 0$ imply that $H$ and $K$ are projections and hence, that $A$ and $B$ are partial isometries.
\end{proof}

\begin{remark}\label{rem: final comments on Arveson's example}
We point out that using an argument similar to that in the proof of Corollary \ref{cor:inflation of dsmaps}
we can prove the following: 
given a doubly stochastic matrix $D\in\mathcal M_d$  and $\epsilon>0$, there exist $m\in\N$ and a unitary $U\in\mathcal M_{m\cdot d}$ such that, for every $\pmb\beta\in\C^d$ we have that 
\begin{equation}\label{eq cor1} \|E_{m\cdot d}(U \,(I_m \otimes \text{diag}(\pmb \beta))\,U^*) - I_m\otimes \text{diag}(D\cdot \pmb\beta)\|\leq \epsilon \, \|\text{diag}(\pmb \beta)\|\, . 
\end{equation}The proof is straightforward and we omit it.
\end{remark}

\subsection{Approximate multivariable Schur-Horn theorems in $\mathcal B(\mathcal H)$}\label{MSAppB(H)}
There has been considerable progress on a related problem - that of characterizing the diagonals of operators with finite spectrum in $\mathcal{B}(\mathcal{H})$. Analysis of this fundamental problem was initiated by Kadison in \cite{KPNAS1,KPNAS2} who gave a complete characterization of the diagonals of projections. Kadison's results have recently been extended by Bownik and Jasper \cite{BowJas2,BowJas1}, who have completely described the diagonals of self-adjoint operators with finite spectrum. 

The multivariable case, where we ask for a description of the joint diagonals of a tuple of commuting hermitian operators is currently only partially 	
understood, even in its most simple formulation. Study of this problem was initiated by Arveson in \cite{ArDiPN}, who analyzed the possible diagonals of normal operators with finite spectrum, all of infinite multiplicity. He discovered an index obstruction analogous to Kadison's index for projections from \cite{KPNAS2}. He also pointed out that there are other obstructions: however the ones he discovered stem from the fact that it is not possible to have a multivariable Schur-Horn theorem in matrix algebras.

Let $\mathcal{A} \subset \mathcal{B}(\mathcal{H})$ be an atomic masa and let $\bold{S}$ be an $n$-tuple of commuting hermitians in $\mathcal{B}(\mathcal{H})$ such that the joint spectrum consists of finitely many points, all of infinite multiplicity. We are interested in the set of joint diagonals of the $n$-tuple $\bold S$, i.e. $E(\mathcal U_\mathcal H(\bold S))=\{E(U\bold{S}U^{*}): U \in \mathcal{U}(\mathcal{H})\}$, where $\mathcal U(\mathcal H)$ is the group of unitary operators acting on $\mathcal B(\mathcal H)$ and $E$ is the normal conditional expectation onto $\mathcal{A}$. A precise characterization of $E(\mathcal U_\mathcal H(\bold S))$ appears quite challenging. 

Let us briefly recall the setting and results from \cite{ArDiPN}. With the notations above, let $\mathcal{X} = \{\pmb{\lambda}_1,\cdots,\pmb{\lambda}_k\}\subset \mathbb R^n$  be the joint spectrum of the $n$-tuple $\bold{S}$ and let $\bold{D} = \{\bold{d}_1,\bold{d}_2,\cdots\}$ be a sequence in $\mathbb R^n$. Assume further that $\mathcal{X}$ is the set of vertices of a convex polygon $C_\mathcal X$ in $\mathbb R^n$. Arveson considered what he called the {\it critical sequences} $\bold{D}$ that accumulate rapidly in $\mathcal X$, i.e. those 
sequences in $C_\mathcal X$ for which 
 there is a map $\phi: \mathbb{N} \rightarrow \{1,\cdots,k\}$ so that 
\[\sum_{m=1}^{\infty} ||\pmb{\lambda}_{\phi(m)} - \bold{d}_{m}|| < \infty\, .\]
In this case Arveson showed that if $\bold D\in E(\mathcal U_\mathcal H(\bold S))$ then there exist 
$\nu_1,\cdots,\nu_k\in\mathbb Z$ such that $\sum_{j=1}^k \nu_j=0$ and $\sum_{m=1}^{\infty} \pmb{\lambda}_{\phi(m)} - \bold{d}_{m} =\sum_{j=1}^k \nu_j\,\pmb\lambda_j$.

In what follows, we look at the ``non-summable'' case where we assume that the sequence is in a closed subset of the interior of $C_\mathcal X$. We then use this non-summable case to characterize the set of approximate diagonals 
$$\overline{E(\mathcal U_\mathcal H(\bold S))}^{||} \subset \mathcal A\, .$$
As we shall see, the index obstructions disappear as long as we are only interested in approximate diagonals.
We begin our analysis with the following simple fact. 

\begin{lemma}\label{const}
Let $\bold{S}$ be a commuting tuple of hermitians in $M_{n}(\mathbb{C})$. Then, there is a unitary $U$ so that $E(U\bold{S}U^{*}) = n^{-1}\,\operatorname{tr}(\bold{S})\, I$.
\end{lemma}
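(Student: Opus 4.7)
The plan is to give a direct, constructive proof using the discrete Fourier transform. Since $\bold{S} = (S_1, \ldots, S_m)$ is a commuting tuple of hermitian matrices in $M_n(\mathbb{C})$, there is a single unitary $V$ that simultaneously diagonalizes the tuple, so that $V^* S_k V = D_k$ is diagonal for each $k$. This reduces the problem to the case where $\bold{S}$ consists of diagonal matrices.

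Now I would use the $n \times n$ unitary Fourier matrix $F$, defined by
\[F_{jk} = \tfrac{1}{\sqrt{n}}\, \omega^{(j-1)(k-1)}, \qquad \omega = e^{2\pi i/n}.\]
The key observation is that for any diagonal matrix $D = \operatorname{diag}(d_1, \ldots, d_n)$,
\[(F D F^*)_{jj} = \sum_{k=1}^{n} |F_{jk}|^2\, d_k = \tfrac{1}{n} \sum_{k=1}^{n} d_k = \tfrac{1}{n}\operatorname{tr}(D),\]
since every entry of $F$ has modulus $n^{-1/2}$. Thus $E(F D F^*) = n^{-1} \operatorname{tr}(D)\, I$ for every diagonal matrix $D$.

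Putting this together, set $U = F V^*$. Then $U$ is unitary and, for each $k$,
\[E(U S_k U^*) = E(F D_k F^*) = \tfrac{1}{n} \operatorname{tr}(D_k)\, I = \tfrac{1}{n} \operatorname{tr}(S_k)\, I,\]
which gives $E(U \bold{S} U^*) = n^{-1} \operatorname{tr}(\bold{S})\, I$ componentwise. There is no real obstacle here; the only structural input is the simultaneous diagonalizability of a commuting tuple of hermitians, and the rest is the standard fact that conjugation by the Fourier matrix flattens the diagonal of any diagonal matrix to its average value.
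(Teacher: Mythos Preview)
Your proof is correct and essentially identical to the paper's: simultaneously diagonalize the commuting hermitian tuple by a unitary, then conjugate by the discrete Fourier matrix, whose entries all have modulus $n^{-1/2}$, to flatten each diagonal to the normalized trace. The only differences are notational (the paper swaps the roles of the letters $U$ and $V$).
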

\begin{proof}
We may pick a unitary $U$ so that $U\bold{S}U^{*}$ is diagonal. Let $V$ be the Fourier unitary, $V_{ij} = n^{-1/2}\,{\omega^{(i-1)(j-1)}}$ where $\omega = \operatorname{exp}(\frac{2\pi i}{m})$. It is elementary to see that  the diagonal of $VDV^{*}$ is constant and equal to the normalised trace of $D$ for any diagonal matrix $D$. Hence, the diagonal of $VU\bold{S}U^{*}V^{*}$ is $n^{-1}\operatorname{tr}(\bold{S})I$. 
\end{proof}

\medskip 


\begin{remark}\label{rem: notation for lemma int}In the statement of the next lemma we use the following terminology: let $\mathcal{X} = \{\pmb{\lambda}_1,\cdots,\pmb{\lambda}_k\}$ be the set of vertices of the convex polygon $C_\mathcal X\subset \R^n$. It is clear that $C_\mathcal X$ coincides with the convex hull of $\mathcal X$ in $\mathbb R^n$. In this case there exists a unique subspace $V_\mathcal X\subset \mathbb R^n$ such that $C_\mathcal X\subset V_\mathcal X$ and $C_\mathcal X$ has nonempty interior, $C_\mathcal X^0=C_\mathcal X^{0(V_\mathcal X)}\neq \emptyset$, relative to $V_\mathcal X$.
\end{remark}

\begin{lemma}\label{dec} Let $\mathcal{X} = \{\pmb{\lambda}_1,\cdots,\pmb{\lambda}_k\}$ be the set of vertices of the convex polygon $C_\mathcal X\subset \R^n$ and let $\bold{d}$ be a point in the interior of $C_\mathcal X$. 
Then, we can find a decomposition,  
\[\bold{d} = \sum_{1 \leq i\neq j \leq k} q_{ij}\ (\alpha_{ij} \ \pmb{\lambda}_i + \beta_{ij}\  \pmb{\lambda}_{j})\]
where the $\{q_{ij}: 1 \leq i \neq j \leq n\}$ are rational numbers summing upto $1$, $\alpha_{ij}\neq 0\neq \beta_{ij}$ and 
$\alpha_{ij}+ \beta_{ij} =1$ whenever $q_{ij}\neq 0$ and such that for every $1\leq i\leq k$ there exists $1\leq j\neq i\leq k$ such that $q_{ij}\neq 0$.
  \end{lemma}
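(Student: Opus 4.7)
The plan is to express $\bold{d}$ as a rational-weighted combination of points lying on edges of $C_{\mathcal{X}}$ that all emanate from the fixed vertex $\pmb{\lambda}_1$ (a ``star''), then symmetrize each ordered pair $(1,i)\leftrightarrow(i,1)$ so that every index $i$ occurs as a \emph{first} index in some non-zero term.

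The first step is to invoke the standard fact about convex polytopes: since $\bold{d}$ lies in the relative interior of $C_{\mathcal{X}}=\operatorname{conv}(\mathcal{X})$, there exist real numbers $r_1,\ldots,r_k>0$ with $\sum_{i=1}^k r_i=1$ and $\bold{d}=\sum_{i=1}^k r_i\,\pmb{\lambda}_i$. (For instance, averaging any convex representation of $\bold{d}$ with the centroid $k^{-1}\sum_i\pmb{\lambda}_i$ produces such a strictly positive representation.)

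Next I would pick rationals $c_2,\ldots,c_k>0$ with $\sum_{i=2}^k c_i=1$ and $c_i\neq r_i$ for every $i\geq 2$; this is possible because $\mathbb{Q}$ is dense in the open $(k-2)$-simplex and the avoidance conditions exclude only finitely many affine hyperplanes (for $k=2$ simply take $c_2=1$, which avoids $r_2<1$). Then, for $2\leq i\leq k$, define
\[q_{1i}=q_{i1}=\tfrac{c_i}{2},\qquad \alpha_{1i}=\beta_{i1}=1-\tfrac{r_i}{c_i},\qquad \beta_{1i}=\alpha_{i1}=\tfrac{r_i}{c_i},\]
and set all remaining $q_{pq}=0$. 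By construction each $\alpha_{ij}+\beta_{ij}=1$, each non-trivial $\alpha_{ij}$ and $\beta_{ij}$ is non-zero (using $r_i>0$ and $c_i\neq r_i$), the $q_{pq}$ are rationals summing to $\sum_{i\geq 2}c_i=1$, and every vertex $i$ occurs as the first index of a non-zero term (vertex $1$ via any $q_{1i}$; vertex $i\geq 2$ via $q_{i1}$). A brief calculation using the identity $\sum_{i\geq 2}(c_i-r_i)=r_1$ then confirms $\sum_{i\neq j} q_{ij}(\alpha_{ij}\pmb{\lambda}_i+\beta_{ij}\pmb{\lambda}_j)=\bold{d}$.

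There is no serious obstacle here; the combinatorial bookkeeping becomes routine once the strictly positive vertex representation of $\bold{d}$ is in hand. The only mild wrinkle is the symmetrization between the ordered pairs $(1,i)$ and $(i,1)$, which is precisely what is needed to guarantee the first-index covering condition rather than merely ensuring that each vertex appears \emph{somewhere} in a non-zero term.
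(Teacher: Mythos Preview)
Your argument is correct and is genuinely different from the paper's. The paper proceeds by induction on $k$: after normalising so that $\mathcal{X}$ is the standard simplex, it draws the segment from $\pmb{\lambda}_K$ through $\bold d$ to the opposite face, perturbs the starting point slightly so that $\bold d$ becomes a \emph{rational} convex combination of a point on the edge $[\pmb{\lambda}_1,\pmb{\lambda}_K]$ and an interior point $\bold f_a$ of the face, and then applies the inductive hypothesis to $\bold f_a$. Your route is more direct: you start from a strictly positive barycentric representation $\bold d=\sum_i r_i\pmb{\lambda}_i$, choose rationals $c_i>0$ summing to $1$ with $c_i\neq r_i$, and write $\bold d$ as a rational combination of the edge points $(1-r_i/c_i)\pmb{\lambda}_1+(r_i/c_i)\pmb{\lambda}_i$; the symmetrization $q_{1i}=q_{i1}=c_i/2$ then forces every index to occur first. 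This avoids induction entirely and gives an explicit formula, at the modest cost of allowing the $\alpha_{ij}$ to fall outside $(0,1)$. Since the proposition that invokes this lemma actually uses $\alpha_{ij}\in(0,1)$, it is worth noting that your construction accommodates this for free: because $\sum_{i\geq 2} r_i=1-r_1<1$, one can choose the rationals with $c_i>r_i$ for every $i\geq 2$, which forces $r_i/c_i\in(0,1)$.
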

\begin{proof} By Remark \ref{rem: notation for lemma int} we see that we can assume that $n=k-1$. Moreover, by considering an appropriate affine linear transformation we can assume, without loss of generality, that $\pmb \lambda_1=\pmb 0$ and $\pmb \lambda_j=\pmb e_{j-1}$ for $2\leq j\leq k$, where $\{\pmb e_j\}_{j=1}^{k-1}$ is the canonical basis of 
$\mathbb R^{k-1}$.

We now argue by induction on $k\geq 2$. When $k = 2$, there is $\alpha\in (0,1)$ such that $\bold{d} = \alpha \,\pmb{0}_1 + (1-\alpha)\, \pmb{e}_1$ and we see that the desired decomposition 
 is achieved. Now, assume that the theorem has been proved for $k = 2, \cdots, K-1$. Let $\bold{Y}$ be the convex hull of the first $K-1$ points $\{\pmb \lambda_1, \cdots, \pmb{\lambda}_{K-1}\}=\{\pmb 0,\, \pmb{e}_1, \cdots, \pmb{e}_{K-2}\}$. 

Let the line segment starting at $\pmb{\lambda}_{K}=\pmb{e}_{K-1}$ and passing through $\bold{d}$ meet $\bold{Y}$ in $\bold{f}$. Since $\bold{d}$ was in the interior of the convex hull of $\mathcal{X}$, we have that $\bold{f}$ is in the interior of $\bold Y$ (as described in Remark \ref{rem: notation for lemma int}).
It is clear that there is a $\epsilon$ so that for every $a \in (1-\epsilon,1)$, the line segment starting from $a \ \pmb{e}_{K-1} + (1-a) \ \pmb{0}$ and passing through $\bold{d}$ meets $\bold{Y}$ in its interior as well; call the latter point $\bold{f}_a$. 

It is elementary to see that we may pick an $a \in (1-\epsilon,1)$ so that $\bold{d}$ is a non-trivial \emph{rational} convex combination of $a \ \pmb{e}_{K-1} + (1-a) \ \pmb{0}$ and $\bold{f}_a$, i.e.
\[\bold{d} = q\ (a \ \pmb{e}_{K-1} + (1-a)\ \pmb{0}) + (1-q)\ \bold{f}_a \ , \quad \text{for} \ q\in \mathbb Q\cap (0,1)\,.\]
Now, by the induction hypothesis we have 
\[\bold{f}_a = \sum_{1 \leq i\neq j \leq K-1} q_{ij} \ (\alpha_{ij}\ \pmb{\lambda}_i + \beta_{ij} \ \pmb{\lambda}_{j})\,,\]
with the coefficients $q_{ij}$, $\alpha_{ij}$ and $\beta_{ij}$ for $1\leq i\neq j\leq K-1$ satisfying the properties of the statement.
Thus, we see that 
\[\bold{d} = q\ (a\  \pmb{\lambda}_K + (1-a)\  \pmb{\lambda}_1) + \sum_{1 \leq i\neq j \leq K-1} (1-q)\ q_{ij}\ (\alpha_{ij}\ \pmb{\lambda}_i + \beta_{ij} \ \pmb{\lambda}_{j})\]
and we have our desired decomposition. 
\end{proof}

\begin{proposition}
Let $\mathcal{A}$ be an atomic masa in $\mathcal{B}(\mathcal{H})$ and let $E$ denote the trace preserving conditional expectation onto $\mathcal A$. Let $\bold{S}$ be a tuple of commuting hermitians with finite joint spectrum where each joint eigenvalue has infinite multiplicity. Let $D=\{\bold{d}_1,\bold{d}_2,\cdots\}$ be a sequence with only finitely many distinct entries, all lying in the interior of the convex hull of the joint spectrum of $\bold{S}$. Then, there is a unitary $U$ so that $E(U\bold{S}U^{*}) = \bold{D}$.
\end{proposition}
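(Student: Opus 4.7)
The plan is to produce an orthonormal basis $\{h_n\}_{n \geq 1}$ of $\mathcal{H}$ with $\langle \bold{S}\, h_n, h_n\rangle = \bold{d}_n$ for every $n$; the unitary $U$ defined by $U^* e_n := h_n$ (with $\{e_n\}$ the atomic ONB defining $\mathcal{A}$) then satisfies $E(U\bold{S}U^*) = \bold{D}$. I will assemble $\{h_n\}$ in two stages, the inner one invoking Lemma \ref{dec} and the outer one Lemma \ref{const}. Let $\bold{b}_1, \ldots, \bold{b}_p$ be the distinct entries of $\bold{D}$ and set $M_i := \{n : \bold{d}_n = \bold{b}_i\}$. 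Lemma \ref{dec} gives, for each $i$, a rational decomposition
\[\bold{b}_i = \sum_{j=1}^{J_i} q_{ij}\,\bold{c}_{ij}, \qquad q_{ij} \in \mathbb{Q}_{>0}, \quad \sum_j q_{ij} = 1, \quad \bold{c}_{ij} = \alpha_{ij}\,\pmb{\lambda}_{p_{ij}} + \beta_{ij}\,\pmb{\lambda}_{q_{ij}},\]
where $\bold{c}_{ij}$ is a convex combination of two joint eigenvalues of $\bold{S}$. Choose a common denominator $N_i$ and put $r_{ij} := N_i\, q_{ij} \in \mathbb{N}$. Assuming for the moment that every $M_i$ is infinite, I partition $M_i$ into consecutive blocks $B_i^s$ of size $N_i$, each further split into sub-blocks $B_i^{s,j}$ of size $r_{ij}$, and set $T_{ij} := \bigsqcup_s B_i^{s,j}$.

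I next build an intermediate ONB $\{f_n\}$ realising $\langle \bold{S}\, f_n, f_n\rangle = \bold{c}_{ij}$ for $n \in T_{ij}$. Each spectral subspace $\mathcal{H}_p := \ker(\bold{S} - \pmb{\lambda}_p)$ is infinite dimensional, and only finitely many $(i,j)$-pairs use the vertex $\pmb{\lambda}_p$, so I partition $\mathcal{H}_p$ into correspondingly many infinite dimensional slices $\mathcal{H}_p^{(i,j)}$. Inside $\mathcal{K}_{ij} := \mathcal{H}_{p_{ij}}^{(i,j)} \oplus \mathcal{H}_{q_{ij}}^{(i,j)}$, I choose an orthonormal basis $\{f_n\}_{n \in T_{ij}}$ with $\|P_{p_{ij}}^{(i,j)} f_n\|^2 = \alpha_{ij}$ for every $n$, where $P_{p_{ij}}^{(i,j)}$ is the projection onto $\mathcal{H}_{p_{ij}}^{(i,j)}$. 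Such a basis exists by the constant-diagonal case of Kadison's carpenter theorem in $B(\mathcal{H})$ (the projection has infinite rank and corank inside $\mathcal{K}_{ij}$), and can be written down explicitly by transporting $\mathcal{K}_{ij}$ to $L^2(\mathbb{T})$ and using multiplication by the characteristic function of an arc of measure $\alpha_{ij}$. The identity $\langle \bold{S}\, f_n, f_n\rangle = \alpha_{ij}\,\pmb{\lambda}_{p_{ij}} + \beta_{ij}\,\pmb{\lambda}_{q_{ij}} = \bold{c}_{ij}$ is then immediate, and the slices together exhaust each eigenspace, so $\{f_n\}_{n \geq 1}$ is an ONB of $\mathcal{H}$.

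For the final rotation, fix a block $B_i^s$ of size $N_i$. The slice assignment makes distinct sub-blocks live in orthogonal slices, which forces $\langle \bold{S}\, f_n, f_m\rangle = 0$ whenever $n,m$ lie in different sub-blocks. Inside a single sub-block of type $(i,j)$, writing $f_n = f_n^p + f_n^q$ with $f_n^p \in \mathcal{H}_{p_{ij}}^{(i,j)}$, $f_n^q \in \mathcal{H}_{q_{ij}}^{(i,j)}$, and using $\langle f_n, f_m\rangle = 0$ for $n \ne m$, a short calculation gives $\langle S_\ell f_n, f_m\rangle = (\lambda_{p_{ij}}^{(\ell)} - \lambda_{q_{ij}}^{(\ell)})\,\langle f_n^p, f_m^p\rangle$, so the sub-block matrix for the $\ell$-th coordinate has the form $\bold{c}_{ij}^{(\ell)}\, I + (\lambda_{p_{ij}}^{(\ell)} - \lambda_{q_{ij}}^{(\ell)})\,X$ with a single hermitian $X$ of zero diagonal that is \emph{independent of the coordinate $\ell$}. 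Consequently $\bold{A}^s := \bigl((\langle S_\ell f_n, f_m\rangle)_{n,m \in B_i^s}\bigr)_\ell$ is a commuting tuple of hermitians in $M_{N_i}$, and $\operatorname{tr}(A^s_\ell) = N_i\,\bold{b}_i^{(\ell)}$. Lemma \ref{const} therefore supplies a unitary $V_i^s \in M_{N_i}$ making every diagonal entry of $V_i^s A^s_\ell (V_i^s)^*$ equal to $\bold{b}_i^{(\ell)}$; rotating $\{f_n\}_{n \in B_i^s}$ by $V_i^s$ gives $\{h_n\}_{n \in B_i^s}$ with $\langle \bold{S}\, h_n, h_n\rangle = \bold{b}_i = \bold{d}_n$. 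Patching across blocks produces the desired $\{h_n\}_{n \geq 1}$.

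The main technical point is the ``scalar plus fixed hermitian'' structure of the matrices $A^s_\ell$, which is what makes the tuple $\bold{A}^s$ commute in $M_{N_i}$ and hence accessible to Lemma \ref{const}; this in turn depends on the pairwise orthogonal slice allocation in the intermediate step. The remaining nuisance --- when some $M_i$ is finite --- is handled by bundling all finite $M_i$'s with an initial segment of some infinite $M_{i^*}$ into one super-block and applying the same Lemma \ref{const}-based rotation after a suitable adjustment of the sub-block sizes.
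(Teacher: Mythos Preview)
Your argument is essentially the paper's: decompose each target value via Lemma~\ref{dec} into a rational convex combination of ``edge midpoints'' $\alpha\pmb\lambda_i+\beta\pmb\lambda_j$, realise those midpoints as constant diagonals using Kadison's carpenter theorem on pairs of eigenspace slices, then average inside finite blocks with Lemma~\ref{const}. The paper does this in stages (constant diagonal first, then one exceptional value, then direct sums), whereas you run all infinitely-occurring values $\bold b_i$ in parallel; this is a harmless reorganisation. Your explicit check that the block compressions $\bold A^s$ form a \emph{commuting} tuple (via the ``scalar plus a fixed hermitian $X$'' structure on each sub-block together with the orthogonal slice decomposition) is in fact more careful than the paper, which invokes Lemma~\ref{const} without isolating this point.

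The one real gap is the finitely-occurring case. ``Bundling all finite $M_i$'s with an initial segment of some infinite $M_{i^*}$ into one super-block and applying the same Lemma~\ref{const}-based rotation'' cannot work as stated: Lemma~\ref{const} produces a \emph{constant} diagonal on a block, not the mixed pattern $\bold b_1,\ldots,\bold b_1,\bold b_{i^*},\ldots,\bold b_{i^*}$ you need there, and ``a suitable adjustment of the sub-block sizes'' does not explain how different prescribed values can appear at different positions of a single block. The paper's Step~2 supplies the missing manoeuvre: for a finitely-occurring value $\bold e$ and an infinitely-occurring value $\bold d$, choose an auxiliary interior point $\bold f$ with $\bold d=\frac{a}{a+b}\,\bold e+\frac{b}{a+b}\,\bold f$ for positive integers $a,b$; split $\mathcal H$ into two $\bold S$-invariant infinite-dimensional halves, run the constant-diagonal construction separately to obtain diagonals $\bold e$ and $\bold f$, then regroup into $(a{+}b)$-blocks with normalised trace $\bold d$ and apply Lemma~\ref{const} once more. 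This reduces the finite case to the constant case and iterates to cover several finitely-occurring values. Your sketch should be replaced by (or fleshed out into) this argument.
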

\begin{proof} Let us write the joint spectrum as $\mathcal{X} = \{\pmb{\lambda}_1,\cdots,\pmb{\lambda}_k\}$. We prove the proposition in three steps. 

\noindent \textbf{Step 1: } First of all, we assume that the diagonal sequence is constant, $\bold{D}= \{\bold{d},\bold{d},\cdots\}$. We use Lemma \ref{dec} to find a decomposition,  
\begin{equation}\label{Eq: decom d2}
\bold{d} = \sum_{(i,j)\in I} q_{ij}\ (\alpha_{ij} \ \pmb{\lambda}_i + \beta_{ij} \ \pmb{\lambda}_{j})
\end{equation}
where the $I\subset \{(i,j): \ 1 \leq i \neq j \leq k\}$, 
$\{q_{ij}: \ (i,j)\in I\}$ are non-zero rational numbers summing upto $1$, $\alpha_{ij}\neq 0\neq \beta_{ij}$ and 
$\alpha_{ij}+ \beta_{ij} =1$ for $(i,j)\in I$ and such that for every $1\leq i\leq k$ there exists $1\leq j\neq i\leq k$ such that $(i,j)\in I$.

Write the Hilbert space $\mathcal{H}$ as $\oplus_{(i,j)\in I} \mathcal{H}_{ij}$ where each of the spaces $\mathcal{H}_{ij}$ is infinite dimensional. Let $\mathcal{F}_{ij} := \{e^{ij}_n : n = 1, 2, \cdots\}$ be a basis for $\mathcal{H}_{ij}$. For $(i,j)\in I$ let $\pmb{\mu}_{ij} := \alpha_{ij}\  \pmb{\lambda}_i + \beta_{ij} \ \pmb{\lambda}_{j}$ and let 
$P_{ij}\in \mathcal B(\mathcal H_{ij})$ be a infinite projection such that $I_{ij}-P_{ij}\in \mathcal B(\mathcal H_{ij})$ is also infinite. Now, let 
$\bold{S}_{ij}=\pmb \lambda_i\ P_{ij}+ \pmb \lambda_j\ (I_{ij}-P_{ij})\in \mathcal B(\mathcal H_{ij})$ which is a commuting $n$-tuple of hermitian operators with joint spectrum consisting of $\pmb{\lambda}_{i}$ and $\pmb{\lambda}_j$, both of infinite multiplicity. Because of the properties of the index set $I$ it follows that the operator $\oplus_{(i,j)\in I} \bold{S}_{ij}$ is unitarily equivalent to $\bold{S}$. Hence, without loss of generality we can assume that  
$\bold{S}=\oplus_{(i,j)\in I} \bold{S}_{ij}$.

Let $E_{ij}$ denote the compression to the diagonal with respect to the orthonomal basis $\mathcal F_{ij}$ in $\mathcal B(\mathcal H_{ij})$. Since $\alpha_{ij}\in (0,1)$ for $(i,j)\in I$ then, by  Kadison's Pythagorean theorem \cite{KPNAS2}, there exists a unitary $U_{ij}\in \mathcal B(\mathcal H_{ij})$ such that $$E_{ij}(U_{ij}\ P_{ij}\ U_{ij}^*)=\alpha_{ij}\, I_{ij} \ \implies E_{ij}(U_{ij}\ \bold S_{ij}\ U_{ij}^*)=\pmb \mu_{ij}\, I_{ij}\ , \quad \text{for} \  (i,j)\in I\,.$$ 
Let use write $U = \oplus_{(i,j)\in I} U_{ij}$ and notice that $U\ \bold{S}\ U^{*} = \oplus_{(i,j)\in I} U_{ij} \bold S_{ij}\ U_{ij}^*$. 


Let $N$ be a natural number such that $q_{ij}=n_{ij}\, N^{-1}$ with $n_{ij}$ a positive integer, for $(i,j)\in I$. Notice that $\sum_{(i,j)\in I}n_{ij}=N$.
Using Eq. \eqref{Eq: decom d2}
 we may write 
\begin{equation}\label{Eq: norm trace1}
\bold{d} = \dfrac{1}{N}\sum_{(i,j)\in I} n_{ij} \ \pmb{\mu}_{ij}\, .
\end{equation}

Now, we take a different decomposition of $\mathcal{H}$: let $\mathcal{K}_m$ be the finite dimensional subspace (of dimension $N$) spanned by $\{ e^{ij}_{l}: (i,j)\in I \, , \ 1+(m-1)\  n_{ij}\leq l \leq m\ n_{ij}\}$. It is clear that $\mathcal{H} = \oplus_{m\in\mathbb N} \mathcal{K}_m$. Letting $P_m$ be the orthogonal projection onto $\mathcal{K}_m$, we see that the matrix $P_m (U\,S\,U^{*}) P_m$ has diagonal consisting of $n_{ij}$ copies of $\pmb{\mu}_{ij}$ for $(i,j)\in I$ and hence, has normalized trace (see Eq. \eqref{Eq: norm trace1}) equal to $\bold d$.
By Lemma \ref{const}, one may conjugate by a unitary matrix $V_m$ to make the diagonal constant, this constant being $\bold{d}$. Letting $V$ be the unitary $\oplus_{m\in\mathbb N} V_m$, we see that $E(VU\bold{S}U^{*}V^{*}) = \{\bold{d}, \bold{d}, \cdots\}$, where $E$ is the conditional expectation onto the diagonal algebra associated to the basis $\mathcal{F}=\{e^{ij}_n:\ (i,j)\in I\, , \ n\in\mathbb N\}$.

\noindent \textbf{Step 2:} We next assume that the diagonal consists of a point $\bold{d}$ repeated infinitely often and a single other point $\bold{e}$. Pick a point $\bold{f}$ in the interior of the convex hull of $\mathcal{X}$ so that $\bold{d}$ is a rational convex combination of $\bold{e}$ and $\bold{f}$. Applying the argument in Step1 twice, we see that there is a unitary $U$ so that 
\[U\bold{S}U^{*} = \left( \begin{array}{ccc}
\bold{e} & \ast & 0\\
\ast & \bold{A} &  0\\
0 &  0 & \bold{B}  \end{array} \right)\, , \]
were we consider the  3$\times$3 block representation with respect to the orthogonal decomposition 
$\mathcal{H} = \mathbb{C} \oplus \mathcal{H}_1 \oplus \mathcal{H}_2$ so that $\mathcal{H}_1$ and $\mathcal{H}_2$ are infinite dimensional and $\bold{A}$ and $\bold{B}$ have constant diagonals $\bold{e}$ and $\bold{f}$ respectively. We may write $$\bold{d} = \dfrac{a}{a+b}\ \bold{e} + \dfrac{b}{a+b}\ \bold{f}$$ for some positive integers $a, b$. Choose projections $P_n$ whose range space is spanned by basis vectors, numbering $a$ from $\mathcal{H}_1$ and $b$ from $\mathcal{H}_2$ and so that $\oplus_{n\in\mathbb N} P_n \mathcal{H}= \mathcal{H}_1 \oplus \mathcal{H}_2$.  Notice that the normalized trace of $P_nU\bold{S}U^{*}P_n$ is $\bold{d}$; hence, by Lemma \ref{const}, we may conjugate by a unitary to achieve a constant diagonal $\bold{d}$ on $P_n \mathcal{H}$. Taking direct sums, we see that there is a unitary conjugate with diagonal $\{\bold{e}, \bold{d}, \bold{d}, \cdots\}$. 

\noindent \textbf{Step 3:} This argument can be trivially extended, by taking direct sums, to the case when the diagonal consists of only finitely many distinct elements - In the case an element occurs infinitely many times, use the first part of the proof; in the case it occurs finitely many times, the second part. The proposition follows.
\end{proof}

\medskip

\begin{proof}[ Proof of Theorem \ref{thm on appr diag of normals}]
%
Any sequence in $\mathbb R^n$ can be approximated in the $\ell^{\infty}$ norm by a sequence with only finitely many distinct entries. A moment's thought shows that if the sequence belongs to the convex hull of $\mathcal{X}$, the approximating sequence with finitely many distinct entries can be chosen to lie in the interior of the convex hull.
\end{proof}

\medskip

Some final remarks are in order. We have shown in Theorem \ref{SHApp} that in the type II$_1$ setting there are nice characterizations of the set of approximate joint diagonals of a tuple of commuting hermitians operators in terms of joint majorization. This result is somewhat unexpected, since its finite dimensional version (i.e. for the finite discrete factor $\m_d$) fails.
Yet, the structure of the set of joint diagonals of a tuple of commuting hermitians operators 
in a type II$_1$ factor still remains to be understood as there are obstructions for a full extension of the 
Schur-Horn theorem for selfadjoint operators to the multivariable setting in a type II$_1$ factor (as explained in Remark \ref{rem diag vs approx diag}). The fact that there are differences between joint diagonals and approximate joint diagonals of tuples of commuting hermitians
is in accordance with the distinction between diagonals and approximate diagonals of selfadjoint and normal operators in $B(\mathcal H)$ (with respect to discrete masas), as seen by comparing the work of Neumann \cite{NeuSH} and Theorem \ref{thm on appr diag of normals} above (on approximate diagonals) with the work of Kadison \cite{KPNAS2}, Bownik and Jasper \cite{BowJas2,BowJas1}, Arveson and Kadison \cite{ArvKad} and Arveson \cite{ArDiPN} (on diagonals).
This remarks lead to what seems to be a challenging problem, namely to determine the nature of the 
obstructions in the type II$_1$ setting.
%
%
%
%
%

{\small 

}

\Addresses

\end{document}